\def\makecoloredx b#1--#2{
        \expandafter\newcommand\csname b#1\endcsname{\ifmmode\else${}$\fi\bgroup\edef\startedat{\the\inputlineno}\color{#2}}%
        \expandafter\newcommand\csname e#1\endcsname{\egroup}%
        \expandafter\newcommand\csname b#1OK\endcsname{\bgroup\edef\startedat{\the\inputlineno}}%
        }
\def\Nmakecoloredx b#1--#2{
        \expandafter\newcommand\csname b#1\endcsname{}%
        \expandafter\newcommand\csname e#1\endcsname{}%
        \expandafter\newcommand\csname b#1OK\endcsname{}%
        }
\def\makecolored#1#2{\makecoloredx#1--{#2}} 
\let\optNNmakecolored\makecolored
\gdef\Pmakecoloredx b#1--#2{
        \expandafter\newcommand\csname b#1\endcsname{\message{ ERROR:  b#1 unexpected ! }{\endlinechar\m@ne \global\read\m@ne to\@gtempa}}%
        \expandafter\newcommand\csname e#1\endcsname{}%
        \expandafter\newcommand\csname b#1OK\endcsname{}%
        }
\gdef\NNmakecoloredx b#1--#2{
        \expandafter\newcommand\csname b#1\endcsname{\errmessage{ ERROR:  b#1 unexpected !}}%
        \expandafter\newcommand\csname e#1\endcsname{}%
        \expandafter\newcommand\csname b#1OK\endcsname{}%
        }
\def\optNNmakecolored#1#2{\NNmakecoloredx#1--{#2}} 
\providecommand\markTA{}
\providecommand\markTC{}
\providecommand\markTD{}
    \providecommand\bledenxv{}
    \providecommand\eledenxv{}
\providecommand\MKlistopadxvb{}
\providecommand\MKlistopadxve{}
\providecommand\blistopadxvH{}
\providecommand\elistopadxvH{}
\newcommand\mynobreakpar{\par\nobreak\@afterheading}
\newcommand\myParBeforeItems{\text{}\mynobreakpar}
\providecommand\itemref[1]{(\ref{#1})}    
\theoremstyle{plain}
\newtheorem{theorem}{Theorem}[section]
\newtheorem{corollary}[theorem]{Corollary}
\newtheorem{lemma}[theorem]{Lemma}
\newtheorem{claim}[theorem]{Claim}
\newtheorem{proposition}[theorem]{Proposition}
\theoremstyle{definition}
\newtheorem{definition}[theorem]{Definition}
\newtheorem{question}[theorem]{Question}
\newtheorem*{ack*}{Acknowledgement}
\theoremstyle{remark}
\newtheorem{remark}[theorem]{Remark}
\newtheorem*{remark*}{Remark}
\newcommand\R{\mathbb R}
\newcommand\rn{\mathbb R^n}
\newcommand\N{\mathbb N}
\newcommand\C{\mathcal C}
\newcommand\F{\mathcal F}
\newcommand\eps{\varepsilon}
\newcommand\tsamark{\markYY{blue!40!white}}
\newcommand\tsbmark{\markYY{green!40!white}}
\newcommand\tsamark{}
\newcommand\tsbmark{}
\def\tinyspacebeforewidehat{\tsbmark\hskip .0888em}  
\def\tinyspaceafterwidehat{\tsamark\hskip .1667em}   
\newcommand\theset{F}
\newcommand\ixsetmark{\markY{red!30!white}}
\newcommand\ixsetmarkAPPEN{\markY{green!30!white}}
\newcommand\opmark{\markY{blue!30!white}}
\newcommand\ixsetmark{}
\newcommand\ixsetmarkAPPEN{}
\newcommand\opmark{}
\newcommand\ixsetGx{\ixsetmark \Gamma_{\! x}}
\newcommand\ixsetSx{\ixsetmark \mathcal J_{\!x}}
\newcommand\ixsetSxzero{\ixsetmark \mathcal J_{\!x_0}}
\newcommand\ixsetSbaru{\ixsetmark \mathcal J_{\!\bar u}}
\newcommand\ixsetSbarv{\ixsetmark \mathcal J_{\!\bar v}}
\newcommand\ixsetSxs{\ixsetmarkAPPEN \mathcal J_{\! x,s}}
\newcommand\ixsetSxsixm{\ixsetmarkAPPEN \mathcal J_{\! x,6^m}}
\newcommand\operSa{\opmark S_{\!\!a}}
\newcommand\operFa{\opmark F_{\!\!a}}
\newcommand\ddd{\varrho}
\DeclareMathOperator{\dist}{dist}
\DeclareMathOperator{\card}{Card}
\DeclareMathOperator{\spt}{supp}
\DeclareMathOperator{\Lip}{Lip}
\DeclareMathOperator{\Span}{span}
\DeclareMathOperator{\Tan}{Tan}
\DeclareMathOperator{\Ptg}{Ptg}
\newcommand\Fsigma{\text{F$_\sigma$}}
\DeclareMathOperator{\der}{der}
\newcommand\AAAA{\mathcal A}
\newcommand\BBBB{\mathcal B}
\DeclareMathOperator{\interior}{int}
\newcommand\fcolon{\colon}
\newcommand\setcolon{:}
\providecommand\boundary{\partial}
\newcommand{\closure}{\overline}
\titleformat{\section}{\bfseries}{\thesection.}{0.5em}{}
\titleformat{\subsection}{\normalfont\itshape}{\thesubsection.}{0.5em}{}
\titleformat{\subsubsection}{\normalfont\itshape}{\thesubsubsection.}{0.6em}{}
\begin{document}

\title{Extensions of vector-valued functions with~preservation of~derivatives\tnoteref{t1}\tnoteref{tack}}

\tnotetext[t1]{\relax
\
\\
{\bf
\copyright 2016. This manuscript version is made available under the CC-BY-NC-ND 4.0 license http://creativecommons.org/licenses/by-nc-nd/4.0/
}
}

\def\AckBody{
 The research leading to these results has received funding
 from the European Research Council / ERC Grant Agreement no. 291497.
 \\
 The first author would like to thank the private company
        RSJ a.s.
 for the support of his research activities.
 The second author was also supported by grants
 no. 14-07880S of GA\,\v{C}R
 and
 no. RVO 67985840 of Czech Academy of Sciences.
}

\tnotetext[tack]{\AckBody}


\author[am]{M.~Koc\corref{cor1}}
\ead{martin.koc@rsj.com}
\author[a1,a2]{Jan Kol\'a\v{r}}
\ead{kolar@math.cas.cz}


\address[am]{RSJ a.s., Na Florenci 2116/15, 110 00 Praha 1, Czech Republic}
\address[a1]{Institute of Mathematics, Czech Academy of Sciences, \v Zitn\'a 25, 115 67 Praha 1, Czech Republic}
\address[a2]{Mathematics Institute, University of Warwick, Coventry, UK \ (September 2014 -- October 2015)}
\cortext[cor1]{Corresponding author}

\begin{abstract}
  Let $X$ and $Y$ be Banach or normed linear spaces
  and $F\subset X$ a~closed set.
  We apply our recent extension theorem for vector-valued Baire one functions
  to obtain an extension theorem for vector-valued functions
  $f\colon F\to Y$  with pre-assigned derivatives,
  with preservation of differentiability
  (at every point where the pre-assigned derivative is actually a~derivative),
  preservation of continuity,
  preservation of (point-wise) Lipschitz property
  and
  (for finite dimensional domain $X$)
  preservation of strict differentiability
  and
  global (eventually local) Lipschitz continuity.
  This work depends on the paper
  \emph{Extensions of vector-valued Baire one functions with preservation of
  points of continuity}
  (M.~Koc \& J.~Kol\'a\v{r}, J.\,Math.\,Anal.\,Appl.~442:1).
\end{abstract}

\begin{keyword}
 vector-valued differentiable functions\sep extensions\sep strict differentiability\sep partitions of unity

 \MSC[2010]
 26B05\sep 26B35\sep 54C20\sep 26B12
\end{keyword}

\maketitle

  Our results can be roughly viewed as a~joint generalization of
  extension theorems of
  Tietze-Dugundji,
  Whitney ($C^1$-case) and
  McShane-Johnson-Lindenstrauss-Schechtman
  (see \cite[Theorem~2]{JLS}),
  all in point-wise fashion.
  Nevertheless, they were created as
  vector-valued
  generalizations
  of extension theorems of Aversa, Laczkovich, Preiss
  \cite{ALP}
  and Koc, Zaj\'\i\v{c}ek
  \cite{KZ}.

\section{Introduction}
\noindent
Differentiable extensions of functions were considered already in
the
1920's.
In \cite{J}, V.~Jarn\'ik proved that every real-valued differentiable function defined on a~perfect subset of~$\R$
can be extended to an everywhere differentiable function on~$\R$
(he even proved a~stronger form of this result with preservation
of Dini derivatives).
This result was independently obtained by G.~Petruska and M.~Laczkovich in \cite{PL}
(even with some additional estimates for the derivative of the extended function)
and generalized to real-valued differentiable functions defined on arbitrary closed subsets of~$\R$
by J.~Ma\v{r}\'ik in \cite{M}.
Extensions of vector-valued functions defined on (not necessarily closed) subsets of~$\R$
that preserve the derivative or even some other local properties
(e.g. boundedness, continuity or Lipschitz property) were investigated
by A.~Nekvinda and L.~Zaj\'i\v{c}ek in \cite{NZ}.

In \cite[Theorem 7]{ALP},
V.~Aversa, M.~Laczkovich and D.~Preiss
proved a~result concerning the extendibility
to a~real-valued differentiable function on $\rn$. In particular, they proved that
given a~function $f$ (defined on some nonempty closed set $F\subset\rn$)
and a~derivative of $f$ (with respect to $F$),
there exists an everywhere differentiable extension to $\rn$
that preserves the prescribed derivative if and only
if this prescribed derivative is a~Baire one function
on~$F$.
(Recall that a function is {\em Baire one} if
it is the point-wise limit of a sequence of continuous functions.)

The existence of continuously differentiable extensions of real-valued functions defined on closed subsets of $\rn$
was studied in \cite{W} by H.~Whitney already in
the
1930's (even with preservation of higher orders of smoothness).
The vector-valued case can be found, e.g., in \cite[Theorem 3.1.14]{Fed}.

In \cite{KZ},
M.~Koc and L.~Zaj\'i\v{c}ek
proved a~result that naturally jointly generalized both the extension
result
of V.~Aversa, M.~Laczkovich and D.~Preiss
\cite{ALP} as well as the $C^1$ case of the Whitney's extension theorem
for real-valued functions defined on closed subsets of $\rn$
(see, e.g., \cite[\S\,6.5]{EG}).
Their result
\cite[Theorem~3.1]{KZ}
can be roughly described as
a~theorem on extendibility
to a~differentiable function
with preservation of points of continuity of the derivative.
We were able to generalize this result further,
with the main
focus
on {\em vector-valued} functions.
\bledenWSxviOK
  We added several other new features, for example
  non-restrictive assumptions allowing arbitrary function (existing differentiability and continuity points are preserved),
  the preservation of point-wise, local and global Lipschitz property,
  or
  generalization to infinite-dimensional domains.
  One of the main contributions (extensions of vector-valued Baire one functions) was,
  due to its different nature and technical difficulty,
  moved to a~separate paper~\cite{KocKolarB1}.
\eledenWSxvi

Our main results on differentiable extensions
(see Theorem~\ref{thm:infinf} and Theorem~\ref{thm:fininf})
can be jointly formulated in the following way (recall that for
$p\in \N\cup\{\infty\}$,
$C^p$ denotes the class of
$p$-times continuously differentiable functions
in Fr\'echet sense;
note that the notion does not change if Fr\'echet sense is replaced by G\^ateaux sense):

\newcounter{saveenum}
\begin{theorem}
\label{thm:difext}
Let $X$, $Y$ be normed linear spaces,
$\theset\subset X$ a~closed set, $f\fcolon \theset\to Y$ an~arbitrary function
and $L\fcolon \theset\to\mathcal L(X,Y)$ a~Baire one function. 
Let $p\in\N\cup\{\infty\}$.
Then there exists a~function $\bar{f}\fcolon X\to Y$ such that
\begin{enumerate}[\textup\bgroup (i)\egroup]
   \item\label{thm:difext:item:ext}
   $\bar{f}=f$ on $\theset$,

   \item\label{thm:difext:item:cont}
   if $a\in \theset$ and $f$ is continuous at $a$ \textup(with respect to $\theset$\textup),
               then $\bar{f}$ is continuous at $a$,

   \item\label{thm:difext:item:hoelder}
   if $a\in \theset$, $\alpha\in (0,1]$ and $f$ is $\alpha$-H\"{o}lder continuous at $a$ \textup(with respect to $\theset$\textup),
                then $\bar{f}$ is $\alpha$-H\"{o}lder continuous at $a$;
   in~particular, if $f$ is Lipschitz at $a$ \textup(with respect to $\theset$\textup),
             then $\bar{f}$ is Lipschitz at $a$,

   \item\label{thm:difext:item:frechet}
   if $a\in \theset$ and $L(a)$ is a~relative Fr{\'e}chet derivative of $f$ at $a$
               \textup(with respect to $\theset$\textup), then $(\bar{f})^\prime(a)=L(a)$,

   \item\label{thm:difext:item:contcomp}
$\bar{f}$ is continuous on $X\setminus \theset$,

   \item\label{thm:difext:item:smoothcomp}
                if $X$ admits
                $C^p$-smooth partition of unity,
                then $\bar{f}\in{C^p}(X\setminus \theset,Y)$.

\setcounter{saveenum}{\value{enumi}}
\end{enumerate}
Moreover, if $\dim X<\infty$, then
\begin{enumerate}[\textup\bgroup (i)\egroup]
\setcounter{enumi}{\value{saveenum}}
   \item\label{thm:difext:item:strict}
               if $a\in \theset$, $L$ is continuous at $a$ and $L(a)$ is a~relative strict derivative
               of $f$ at $a$ \textup(with respect to $\theset$\textup), then the Fr{\'e}chet derivative
               $(\bar{f})^\prime$ is continuous at $a$ with respect to $(X\setminus \theset)\cup\{a\}$
               and $L(a)$ is the strict derivative of $\bar{f}$ at $a$ \textup(with respect to $X$\textup),
   \MKlistopadxvb
   \item\label{thm:difext:item:lip-Loc-Glob}
\bledenxvipoWSOK
               if $a\in \theset$,
               $R>0$,
               $L$ is bounded on
               $B(a,R) \cap \theset$
               and $f$ is Lipschitz continuous on
               $B(a,R) \cap \theset$,
               then $\bar{f}$ is Lipschitz continuous on
               $B(a,r)$ for every $r<R$;
               if
               $L$ is bounded on $\theset$
               and $f$ is Lipschitz continuous on
               $ \theset$,
               then $\bar{f}$ is Lipschitz continuous on
               $X$.
\eledenxvipoWS

   \MKlistopadxve
\end{enumerate}
\end{theorem}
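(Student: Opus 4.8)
The plan is to build $\bar f$ by a Whitney--Aversa--Laczkovich--Preiss type interpolation on $X\setminus \theset$, assembled from local affine pieces $x\mapsto f(a_i)+L(a_i)(x-a_i)$ with base points $a_i\in \theset$, and to neutralise the fact that $L$ is merely Baire one (not continuous) by invoking the extension theorem for vector-valued Baire one functions of~\cite{KocKolarB1}. As a preliminary step I would apply \cite{KocKolarB1} to $L\fcolon \theset\to\mathcal L(X,Y)$ to obtain a Baire one map $\tilde L\fcolon X\to\mathcal L(X,Y)$ with $\tilde L=L$ on $\theset$, continuous on $X\setminus \theset$ and at every $a\in \theset$ at which $L$ is continuous along $\theset$, with preservation of pointwise $\alpha$-H\"older behaviour and of local and global boundedness; from the Baire one property I would also fix an increasing sequence of closed sets $\theset=\bigcup_k \theset_k$ on each of which $L$ restricts to a continuous function, these governing the scale-dependent choices below.

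Next take a locally finite open cover $\{U_i\}_{i\in I}$ of $X\setminus \theset$ by balls with $\diam U_i\le\dist(U_i,\theset)$, a subordinate partition of unity $\{\vf_i\}_{i\in I}$ which is $C^p$ when $X$ admits $C^p$-smooth partitions of unity (and merely continuous otherwise), and base points $a_i\in \theset$ with $\dist(a_i,U_i)\le 2\dist(U_i,\theset)$; when $\dim X<\infty$ use instead the dyadic Whitney cube decomposition, which additionally controls, for each $r<R$, the cubes meeting $B(a,r)$ in terms of $B(a,R)\cap \theset$. Put $\bar f=f$ on $\theset$ and, for $x\in X\setminus \theset$, let $\bar f(x)$ be a locally finite combination of the affine pieces, assembled \emph{adaptively and scale by scale}: inside a layer $\{x\setcolon 2^{-k-1}<\dist(x,\theset)\le 2^{-k}\}$ an affine piece at $a_i$ is kept with full weight only while it still approximates $f$ on $\theset$ well at the current scale (elsewhere it is damped towards the constant $f(a_i)$), the continuous restrictions $L|_{\theset_k}$ (equivalently, the multi-scale mechanism behind \cite{KocKolarB1}) make the slopes actually occurring near any point vary continuously at each fixed scale, and consecutive layers are telescoped. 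This is the vector-valued counterpart of the constructions of \cite{ALP} and \cite{KZ}. The extension property, continuity of $\bar f$ on $X\setminus \theset$, and membership in $C^p(X\setminus \theset,Y)$ are then immediate, the sum being locally finite with each summand as smooth as $\vf_i$.

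For the remaining properties fix $a\in \theset$ and let $x\to a$ with $x\notin \theset$; every $i$ with $\vf_i(x)\ne 0$ has $\abs{a_i-a}\le C\abs{x-a}$ and $\abs{x-a_i}\le C\dist(x,\theset)\le C\abs{x-a}$. By construction $\bar f(x)-f(a)$ is controlled by the oscillation of $f$ on $\theset\cap B(a,C\abs{x-a})$, which yields preservation of continuity and of $\alpha$-H\"older (in particular Lipschitz) continuity at $a$; and when $L(a)$ is a relative Fr\'echet derivative of $f$ at $a$ the affine pieces near $a$ are retained, so that from
\[
  \bar f(x)-f(a)-L(a)(x-a)=\sum_i\vf_i(x)\Bigl[\bigl(f(a_i)-f(a)-L(a)(a_i-a)\bigr)+\bigl(L(a_i)-L(a)\bigr)(x-a_i)\Bigr]
\]
the first bracketed group is $o(\abs{x-a})$ because $L(a)$ is a derivative, and the second is $o(\abs{x-a})$ because the scale-by-scale selection forces $\sup_i\abs{L(a_i)-L(a)}\to 0$; this gives $(\bar f)'(a)=L(a)$. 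When $\dim X<\infty$, running the same estimate for two points $x,x'\to a$ outside $\theset$, now using continuity of $L$ at $a$, that $L(a)$ is a relative strict derivative of $f$ at $a$, and the Lipschitz bounds on the $\vf_i$ inside a dyadic cube, gives the strict-differentiability statement; and inserting a bound for $L$ on $B(a,R)\cap \theset$ (resp.\ on $\theset$), the Lipschitz constant of $f$ there, and bounds on the derivatives of the $\vf_i$ into the formula gives the local (resp.\ global) Lipschitz conclusion.

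The hard part is exactly the control of the slope term $\sum_i\vf_i(x)\bigl(L(a_i)-L(a)\bigr)(x-a_i)$ carried out above, and more generally the prevention of affine pieces with ``wrong'' slopes from spoiling continuity or H\"olderness: for merely Baire one $L$ the naive single-scale nearest-point interpolation genuinely fails, since the slopes $L(a_i)$ need neither tend to $L(a)$ nor even stay bounded as $x\to a$. The adaptive, multi-scale selection that repairs this — and does so uniformly enough to also deliver strict differentiability and the Lipschitz statements — is precisely the mechanism developed in \cite{KocKolarB1}, which is why that paper is a prerequisite here.
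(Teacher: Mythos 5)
Your skeleton --- a Whitney-type partition of unity on $X\setminus F$ subordinated to balls of radius comparable to $\dist(\cdot,F)$, local affine pieces anchored at near-points of $F$, and the decomposition of $\bar f(x)-f(a)-L(a)(x-a)$ into an increment term and a slope term --- matches the paper's. But the step on which everything hinges does not work as you state it. You take the slopes of the affine pieces to be $L(a_i)$ with base points $a_i\in F$ and claim that an adaptive, scale-by-scale selection ``forces $\sup_i\|L(a_i)-L(a)\|\to 0$''. No selection of base points can achieve this: the $a_i$ relevant near $x$ must lie in $F\cap B(a,C\|x-a\|)$ (otherwise $f(a_i)$ does not approximate $f$ at the right scale), and a Baire one $L$ may be discontinuous at $a$ relative to $F$, so the values $L(a_i)$ on that set need not approach $L(a)$, nor even stay bounded; damping such pieces to constants destroys the derivative matching needed for \itemref{thm:difext:item:frechet}. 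The prerequisite \cite{KocKolarB1} (Theorem~\ref{thm:B1ext} here) does not deliver what you describe (a Baire one extension of $L$ continuous at continuity points); its essential output is a function $A$ defined and continuous on $X\setminus F$ satisfying the non-tangential estimate \eqref{eq:ALP3}, i.e.\ $\|A(x)-L(a)\|\,\dist(x,F)/\|x-a\|\to 0$ as $x\to a$, at \emph{every} boundary point $a$, whether or not $L$ is continuous there. The paper's construction takes the slope of the piece indexed by $\gamma$ to be $A(x_\gamma)$ --- a value of $A$ at a ball center \emph{off} $F$, not a value of $L$ on $F$ --- so the slope error is
\[
  \bigl\|A(x_\gamma)-L(a)\bigr\|\,\bigl\|x-\widehat{x_\gamma}\bigr\|
  \le C\,\bigl\|A(x_\gamma)-L(a)\bigr\|\dist(x_\gamma,F)
  = o\!\left(\|x_\gamma-a\|\right)=o\!\left(\|x-a\|\right),
\]
which is exactly what \itemref{thm:difext:item:frechet} requires. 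Your proposal never formulates this estimate and substitutes a claim that is false for general Baire one $L$; that is a genuine gap, not a presentational one.

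A secondary gap concerns \itemref{thm:difext:item:strict} and \itemref{thm:difext:item:lip-Loc-Glob}. ``Running the same estimate for two points $x,x'\to a$'' breaks down when the segment joining them meets $F$: you cannot apply the mean value inequality across $F$, since $\bar f$ need not be continuous at points of $F$ other than $a$. The paper handles this in Claim~\ref{claim:strictAndLip} by splitting the segment at its first and last contact points $u,v\in F$, replacing them by nearby points $\bar u,\bar v\notin F$, and estimating the increment over $[\bar u,\bar v]$ through a double convex combination of increments $f(\widehat{x_k})-f(\widehat{x_j})$ controlled by the strict-derivative or Lipschitz hypothesis. It also requires the quantitative features of the special partition of Lemma~\ref{l:specPart} (a uniform bound on the number of overlapping supports and the gradient bound $|\phi_j'(x)|\le C_2/r(x)$) together with the boundedness clause \eqref{eq:boundedness} of Theorem~\ref{thm:B1ext}; none of these ingredients appear in your outline beyond a passing mention of dyadic cubes.
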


\begin{remark}\label{rem:ZAthm:diffext}
\myParBeforeItems
\begin{enumerate}[(a)]
\item
                Statement \itemref{thm:difext:item:smoothcomp} of Theorem~\ref{thm:difext}
                can be modified as follows:
                if
                $X$ admits
                $\mathcal F$-partition of unity
                where $\mathcal F$ is a~fixed class of functions on $X$ from Lemma~\ref{l:XbezFjakoDefF},
                then
                $\bar{f}|_{X\setminus F}$ is of class $\mathcal F$
(where we say that $g|_U$ is of class $\mathcal F$,
on an open set $U$,
if for every $x\in U$, there is a~neighborhood $V$ of $x$ such that $g|_V$ is a~restriction of a~function from $\mathcal F$).
\item
The assumptions on partitions of unity cannot
be removed from (\ref{thm:difext:item:smoothcomp}),
see Proposition~\ref{prop:necessary}.
\item\label{rem:ZAthm:diffext:item:C1}
  In
  (\ref{thm:difext:item:strict}),
  if we additionally assume that
  $L(b)$ is a~relative Fr\'echet
  derivative of $f$ at $b$
  (with respect to $F$) for every $b\in U$
  where $U$ is a~relative neighborhood of $a$ in $F$,
  then
  $(\bar f)'$ is continuous at $a$ (with respect to $X$).
\item
  The condition $\dim X<\infty$ cannot be removed from \itemref{thm:difext:item:strict} or \itemref{thm:difext:item:lip-Loc-Glob},
  see Remark~\ref{rem:finDimNecess} for an example.
\end{enumerate}
\end{remark}
  If the Nagata dimension $\dim_N F$ of $F$ is finite
  (see \cite[Definition~4.26]{BBI} or \cite[p.~13]{BBII}),
  there is no obvious obstacle for extending with preservation of the Lipschitz property
  \cite[Theorem~6.26]{BBII}.\footnote{\relax
      \bunordruhapodruheschXVIOK
      Space $Y$ can be used as the range, see~\cite[Proposition~6.5]{BBII}
      \eunordruhapodruheschXVI
      whose proof works for normed linear spaces.}
  Thus we rise a~natural
  question:
\begin{question}
  Can the condition $\dim X< \infty$ in
  Theorem~\ref{thm:difext} (\itemref{thm:difext:item:strict} and \itemref{thm:difext:item:lip-Loc-Glob})
  be replaced by condition
  $\dim_N F < \infty$?
\end{question}

\begin{remark}
Let $F$ be a~closed subset of $\rn$, $Y$ be normed linear space.
The reader might wonder if,
for every function
$f \fcolon F \to Y$
differentiable at every point of $F$ (with respect to $F$),
there is a differentiable extension
$\bar f\fcolon \rn \to Y$.
The answer depends on quality of $F$ and kinds of differentiability under consideration:
\myParBeforeItems
\begin{enumerate}[a\textup)]
        \item
A condition on $F$
that assures the existence of a~differentiable extension for every differentiable function $f$ on $F$
can be found in \cite[Corollary~4.3]{KZ}.
It originates from \cite[Theorem~4~(ii)]{ALP};
in both papers, it was formulated with real-valued functions in mind only but it works in the vector-valued case too.
Indeed, for $F$ satisfying this condition, the relative derivative $L:=f^\prime$ is always a~Baire one
function on $F$ by \cite[Proposition~3~(ii), Theorem~4~(ii)]{ALP}
(their proofs work for vector-valued functions as well).
        For $F$ satisfying the condition,
the extension $\bar f$ can be obtained
by Theorem~\ref{thm:difext}.
        \item
        However,
        this extension does not necessarily exist
        for a~general set $F$:
        \cite[Theorem~5]{ALP} gives an example of a~compact set $F\subset \R^2$ and a~(uniquely) differentiable function $f$ on $F$
        such that $f'$ is not Baire one on $F$ and $f$ therefore cannot be extended to a~differentiable function on $\R^2$.
        \item
        A weaker condition on $F$ (namely that $\Span \Tan (F,x) = \rn$ for every $x\in \der F$)
        is sufficient if we ask for a~differentiable extension of a~{\em strictly} differentiable function (see \cite[Proposition~4.10]{KZ}).
        This result for real functions can be generalized to vector-valued functions.
        For more details,
        see Proposition~\ref{prop:KZ410},
        where the condition  $\Span \Tan (F,x) = \rn$ is relaxed to $\Span \Ptg (F,x) = \rn$.
        \item
        For a~positive result on $C^1$ extensions of strictly differentiable functions see
        \cite[Corollary~4.7]{KZ}
        which can be extended for vector-valued functions with the use of Theorem~\ref{thm:difext},
        otherwise following the proofs from~\cite{KZ}.
        Again, a~condition has to be imposed on the set $F$, see \cite[Example~4.14]{KZ}.
        \item
\bunordruhapodruheschXVIOK
        Proposition~\ref{prop:KZstrictstrict} contains another result 
        on
        $C^1$ extensions of strictly differentiable functions,
        with assumption of continuity of the derivative and, again,
        a~condition on the set $F$.
        \item
        For more on this topic see \cite{KolarClanekIII}.
\eunordruhapodruheschXVI
\end{enumerate}
\end{remark}

\begin{remark}
Note that all Hilbert spaces and spaces $c_0(\Gamma)$ with arbitrary set $\Gamma$
admit $C^\infty$-smooth partition of unity, all reflexive spaces and spaces with a~separable dual
admit $C^1$-smooth partition of unity, $L_p$ spaces with $p\in[1,\infty)$ admit partition
of unity of the same smoothness order as their canonical norms. The existence of a~$C^p$-smooth bump
implies the existence of $C^p$-smooth partitions of unity in all separable spaces
as well as in all reflexive spaces.

We refer the reader to Remark~\ref{rem:part} and Remark~\ref{rem:bumps} for more information
concerning the existence of partitions of unity in various spaces.
\end{remark}

\smallbreak 

The key ingredient in our proof of Theorem \ref{thm:difext} is our recently developed extension theorem
for vector-valued Baire one functions (cf. \cite[Theorem 6]{ALP} and \cite[Theorem 2.4]{KZ},
where special cases of this result were proved):


\begin{theorem}\cite[Theorem 1.1]{KocKolarB1}\label{thm:B1ext}
  Let $(X,\ddd)$ be a~metric space, $F\subset X$ a~closed set, $Z$ a~normed linear space
  and $L\fcolon F\to Z$ a~Baire one function. Then there
  exists a~continuous function $A\fcolon (X\setminus F) \to Z$ such that
  \begin{equation}\label{eq:ALP3}
  \lim_ {\substack{x\to a\\ x\in X\setminus F}}
  \left\|A(x) - L(a)\right\|_Z
  \frac{\dist (x,F)}{\ddd(x,a)} = 0
  \tag{NT}
  \end{equation}
  for every $a\in \boundary F$,
  \begin{equation}\label{eq:continuity}
  \lim_ {\substack{x\to a\\ x\in X\setminus F}} A(x) = L(a)
  \tag{C}
  \end{equation}
  whenever $a\in\boundary F$ and $L$ is continuous at $a$,
  and
\blistopadxvH
  \begin{equation}\label{eq:boundedness}
      \text{$A$ is bounded on $B(a,r) \setminus F$}
  \tag{B}
  \end{equation}
  whenever $a\in F$,
  $r\in (0,\infty)\cup\{\infty\}$
  and $L$ is bounded on $B(a, 12 r) \cap F$.
\elistopadxvH
\end{theorem}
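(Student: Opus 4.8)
The plan is to build $A$ from a Whitney-type decomposition of the open set $U := X \setminus F$ together with a subordinate continuous partition of unity, assigning to each patch a frozen value of $L$ taken at a carefully chosen base point of $F$. Concretely, since metric spaces are paracompact, I would first organize $U$ by the dyadic shells $\{x\in U : 2^{-m-1} < \dist(x,F) \le 2^{-m}\}$, refine this to a locally finite open cover $\{V_i\}_{i\in I}$ by balls whose radii are a fixed small multiple of $\dist(V_i,F)$ (so that $\diam V_i$ and $\dist(V_i,F)$ are comparable, with a universal Whitney constant $C$), and take a continuous partition of unity $\{\phi_i\}$ subordinate to $\{V_i\}$. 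For each $i$ I would choose a base point $a_i \in F$ with $\dist(a_i, V_i) \le 2\dist(V_i,F)$ and set
\[
   A(x) := \sum_{i\in I} \phi_i(x)\, L(a_i), \qquad x\in U.
\]
Continuity of $A$ on $U$ is then immediate, since the sum is locally finite and each summand is continuous, irrespective of $L$ being merely Baire one.

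The easy properties fall out of this formula. For \eqref{eq:boundedness}: if $x\in B(a,r)\setminus F$ and $\phi_i(x)\ne 0$, then $\dist(x,F)\le\ddd(x,a)<r$ and $\ddd(a_i,x)\le C\dist(x,F)<Cr$, so $a_i\in B(a,(C+1)r)\cap F$; choosing the Whitney constant $\le 11$ places every contributing $a_i$ in $B(a,12r)\cap F$, whence $A(x)$, a convex combination of the bounded values $L(a_i)$, is bounded — this is exactly what the factor $12$ in the hypothesis is for. For \eqref{eq:continuity}: if $L$ is continuous at $a\in\boundary F$ and $x\to a$, then every contributing base point satisfies $\ddd(a_i,a)\le\ddd(a_i,x)+\ddd(x,a)\le(C+1)\ddd(x,a)\to 0$, so $L(a_i)\to L(a)$ and hence $A(x)\to L(a)$.

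The genuine difficulty is \eqref{eq:ALP3}, and this is where the Baire one hypothesis must be used: with a naive nearest-point choice of the $a_i$ it can fail at a discontinuity point $a$, because $a_i\to a$ does not force $L(a_i)\to L(a)$. The key observation is that \eqref{eq:ALP3} is required separately for each fixed $a$, and that the weight $\dist(x,F)/\ddd(x,a)\le 1$ is small precisely along tangential approaches; smallness of $\norm{A(x)-L(a)}$ is therefore only needed along the near-nearest-point approaches, where $\dist(x,F)\approx\ddd(x,a)$ and the contributing $a_i$ lie within $\approx(C+1)\dist(x,F)$ of $a$. To exploit this I would invoke the oscillation characterization of Baire one functions: for every $k$ the set $F$ is a countable union $F=\bigcup_n F_{k,n}$ of closed sets on each of which $\operatorname{osc} L \le 2^{-k}$ (this must first be checked to persist for $Z$-valued functions, with the norm defining oscillation). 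The base points would then be selected not by proximity alone but by a \emph{scale-matching} rule tying a patch living at scale $2^{-m}$ to an oscillation level $k(m)\to\infty$ and an exhaustion index, so that as $x\to a$ the contributing base points eventually share a small-oscillation piece $F_{k(m),n}$ with $a$, forcing $\norm{L(a_i)-L(a)}\le 2^{-k(m)}$ and hence, via $\norm{A(x)-L(a)}\le\sum_i\phi_i(x)\norm{L(a_i)-L(a)}$, the quantitative limit in \eqref{eq:ALP3}.

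I expect the scale-matching step to be the main obstacle. One must arrange the book-keeping between the dyadic scale $m$, the oscillation level $k$, and the exhaustion index $n$ so that the selection is simultaneously consistent across overlapping patches (keeping $A$ continuous and the partition-of-unity cancellation usable) and strong enough to guarantee that the $a_i$ contributing near $a$ really do end up in a common small-oscillation piece with $a$, for \emph{every} boundary point at once. Verifying that this refined selection still respects the geometric bounds underlying \eqref{eq:boundedness} and \eqref{eq:continuity} — that moving the $a_i$ away from the exact nearest point does not push them out of the controlled region $B(a,12r)\cap F$ nor spoil $\ddd(a_i,a)\to0$ — is the routine but necessary consistency check that binds the three properties together.
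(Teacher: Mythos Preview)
This theorem is not proved in the present paper: it is quoted verbatim from the companion paper \cite{KocKolarB1} and used here as a black box (see the sentence preceding its statement). There is therefore no proof in this paper to compare your proposal against.

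As a standalone assessment: your framework --- a Whitney-type cover of $X\setminus F$, a subordinate continuous partition of unity, and frozen values $L(a_i)$ at base points $a_i\in F$ with $\varrho(a_i,x)\le C\,\dist(x,F)$ --- is the natural one, and your treatment of \eqref{eq:boundedness} and \eqref{eq:continuity} is correct (including the reading of the constant $12$). You are also right that \eqref{eq:ALP3} is where all the content lies and that a naive nearest-point rule fails at discontinuity points of $L$.

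There is, however, a concrete gap in your plan for \eqref{eq:ALP3}. The oscillation decomposition you invoke --- writing $F=\bigcup_n F_{k,n}$ with each $F_{k,n}$ closed and $\operatorname{osc}(L|_{F_{k,n}})\le 2^{-k}$ --- is in general \emph{not} available for Baire one maps into a non-separable normed space $Z$: from $L_n\to L$ pointwise one gets closed sets $G_n=\{x:\sup_{m\ge n}\|L_m(x)-L_n(x)\|\le\varepsilon\}$ covering $F$, but on $G_n$ one only controls $\|L-L_n\|$, and $L_n(G_n)$ need not be coverable by countably many small balls (take $X=Z$ non-separable and $L_n=\mathrm{id}$). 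So the countable small-oscillation stratification you rely on may simply not exist. The safer route is to work directly with the approximating sequence $(L_n)$ that \emph{defines} Baire one in this paper: rather than freezing values $L(a_i)$, one uses on the shell at scale $\approx 2^{-m}$ a continuous approximant $L_{n(m)}$ (or a Dugundji-type extension of it) with $n(m)\to\infty$, and exploits pointwise convergence $L_n(a)\to L(a)$ together with continuity of each $L_n$ to force the required smallness in \eqref{eq:ALP3}. Your ``scale-matching'' intuition is exactly right, but the matching should be between the dyadic scale and the index $n$ of the approximating sequence, not an oscillation stratum that may fail to exist. Until that is carried out, the proposal remains a plausible outline rather than a proof.
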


Besides this introductory section, our paper consists of three more sections.
Section \ref{sec:prelim} contains the definitions of~the notions related to derivatives and partitions
of unity as well as an auxiliary proposition
about
relativizations
of partitions of unity to open subsets.
Section \ref{sec:infinf} is devoted to extensions of vector-valued functions
from closed subsets of {\em infinite} dimensional spaces, whereas Section \ref{sec:fininf}
extends the proofs of the previous section to obtain stronger results for {\em finite} dimensional domains.
\bunorpodruheschXVIOK
        There is also
        technical Appendix~\ref{apen:partition} on partitions of unity
        and
        Appendix~\ref{apen:KZ} containing a~small elaboration of a~theme from \cite[Section~4]{KZ}.
\eunorpodruheschXVI


\section{Basic notions and preliminaries}\label{sec:prelim}
\noindent
Let $X$ and $Y$ be normed linear spaces. We denote by $\mathcal L(X,Y)$
the set of all bounded linear operators from $X$ to $Y$.
For $u\in\mathcal L(X,Y)$, the number
$$\left\|u\right\|
_{\mathcal L(X,Y)}
:=\inf\left\{K>0\setcolon \left\|u(x)\right\|_Y\leq K\left\|x\right\|_X \text{ for every } x\in X\right\}$$
is called the \textit{norm} of the linear operator $u$.
The set $\mathcal L(X,Y)$ equipped with the norm
$\left\|\cdot\right\|_{\mathcal L(X,Y)}$
forms a~normed linear space, which is complete provided $Y$ is complete.

\begin{definition}\label{D:Rel_der}
Let $X$ and $Y$ be normed linear spaces, $A\subset X$ an arbitrary set,
$f\fcolon A\to Y$ a~function and $a\in A$.

\begin{itemize}
\item[(i)] A bounded linear operator $\operFa\fcolon X\to Y$ is called
           a~\textit{relative Fr{\'e}chet derivative of $f$ at $a$} (with respect to $A$)
           if either $a$ is an isolated point of $A$, or
    \begin{equation}
    \lim\limits_{\substack{x\to a\\x\in A}}\frac{\left\|f(x)-f(a)-\operFa(x-a)\right\|_Y}{\left\|x-a\right\|_X}=0.
    \end{equation}
\end{itemize}

\begin{itemize}
\item[(ii)] A bounded linear operator $\operSa\fcolon X\to Y$ is called
            a~\textit{relative strict derivative of $f$ at $a$} (with respect to $A$)
            if either $a$ is an~isolated point of $A$, or
    \begin{equation}\label{eq:strict-def}
    \lim\limits_{\substack{y\to a\\x\to a\\x,y\in A,\,x\neq y}}
    \frac{\left\|f(y)-f(x)-\operSa (y-x)\right\|_Y}{\left\|y-x\right\|_X}=0 \quad\text{(with $x=a$ or $y=a$ allowed)}.
    \end{equation}
\end{itemize}

\begin{itemize}
\item[(iii)] We say that $L\fcolon A\to\mathcal L(X,Y)$ is a~\textit{relative Fr{\'e}chet}
             (resp.\ \textit{strict}) \textit{derivative of $f$} (on $A$) if $L(a)$ is a~relative Fr{\'e}chet
             (resp.\ strict) derivative of $f$ at $a$ (with respect to $A$) for each $a\in A$.
\end{itemize}
\end{definition}

\begin{remark}
\myParBeforeItems
\begin{enumerate}[(a)]
\item At interior points of $A$, the notion of relative Fr{\'e}chet (resp.\ strict) derivative
      agrees with the classical notion of the Fr{\'e}chet (resp.\ strict) derivative.
      In particular, if it exists then it is uniquely determined.
\item Classically, Fr{\'e}chet differentiability of functions between normed linear spaces
      is introduced only for functions defined on open sets.
\item A relative strict derivative of $f$ is clearly also a~relative Fr{\'e}chet derivative of $f$.
\item If we consider $X=\rn$ and a~function $f\fcolon A\subset\rn\to Y$ in Definition \ref{D:Rel_der},
      then we can omit the assumption of boundedness for operators $\operFa$ and $\operSa $,
      since every linear function defined on a~finite-dimensional normed linear space
      is bounded automatically.
\item If $X=\rn$ then $\operFa$ is called a~{\em relative derivative of $f$ at $a$} or,
      if $a$ is an interior point of $A$, the~{\em derivative of $f$ at $a$}.
\end{enumerate}
\end{remark}

\begin{definition}\label{D:P-Lip}
Let $X$ and $Y$ be normed linear spaces, $A\subset X$ an arbitrary set,
$f\fcolon A\to Y$ a~function and $a\in A$.

If $\alpha\in (0,1]$, we say that \textit{$f$ is $\alpha$-H\"{o}lder continuous at $a$} (with respect to $A$)
if
either $a$ is an isolated point of $A$, or
$$\limsup\limits_{\substack{x\to a\\x\in A}}\frac{\left\|f(x)-f(a)\right\|_Y}{\left\|x-a\right\|^{\alpha}_X}<\infty.$$

We say that \textit{$f$ is Lipschitz at $a$} (with respect to $A$)
if it is $1$-H\"{o}lder continuous at $a$ (with respect to $A$), i.e.,
either $a$ is an isolated point of $A$, or
$$\limsup\limits_{\substack{x\to a\\x\in A}}\frac{\left\|f(x)-f(a)\right\|_Y}{\left\|x-a\right\|_X}<\infty.$$
%

As usual, $f$ is 
{\em point-wise $\alpha$-H\"older}
({\em point-wise Lipschitz})
if $f$ is
$\alpha$-H\"older
continuous
(Lipschitz)
at every $a\in A$.
And $f$ is
{\em locally $\alpha$-H\"older}
({\em locally Lipschitz})
if it is
$\alpha$-H\"older (Lipschitz)
(in the classical sense)
in a neighborhood of every $a\in A$.
\end{definition}


    If $U=X$ then the following definitions agree with the usual
    notions (see \cite[16.1., p.~165]{KM} or \cite[p.~304]{HHZ}).
    \bledenxv
    If $U\subset X$
    is a
    \eledenxv
    general open set,
    we essentially consider the restrictions to $U$.

\begin{definition}\label{def:rozklad}
  Let
  $X$ be a~metric space,
  $\F$ a~class of functions on $X$
  and
  $U\subset X$ an open set.

    A {\em locally finite partition of unity in $U$}
    (shortly a~{\em  partition of unity in $U$})
    is
    a~collection
    $\{\psi_\gamma\}_{\gamma\in\Gamma}$
    of real-valued functions $\psi_\gamma$ on
    $X$ such that
        \bledenxvipoWSOK
    $\sum_{\gamma\in\Gamma} \psi_\gamma(x) = 1$ for every $x\in U$ and
        \eledenxvipoWS
    there is a~neighborhood $V_y$ of $y$,
    for every $y\in U$,
    so that
    all but a~finite
    number of $\psi_\gamma$ vanish on $V_y$.

    If $\psi_\gamma \in \F$ for every $\gamma\in\Gamma$, we talk about an {\em $\F$-partition of unity}.
    If $\F$ is not specified, usually the continuous functions are assumed.

    We say that a~(locally finite) partition of unity
    $\{\psi_\gamma\}_{\gamma\in\Gamma}$
    in $U$
    is {\em subordinated to} an open cover $\mathcal U$ of $U$ if for every $\gamma\in\Gamma$
    there is $U_\gamma\in \mathcal U$ such that
    $\spt (\psi_\gamma) \subset U_\gamma$, where
    $\spt (\psi_\gamma) = \closure {\{x\in X\setcolon \psi_\gamma(x) \neq 0\}}$.

\bledenxv
\eledenxv
    We say that $U$
    {\em admits $\F$-partition of unity}
    if for every open cover $\mathcal U$ of $U$ there is a~locally finite
    $\F$-partition
    of unity $\{\psi_\gamma\}_{\gamma\in\Gamma}$ in $U$
    subordinated to $\mathcal U$.
\end{definition}

\begin{lemma}\label{l:XbezF}\label{l:XbezFjakoDefF}
   Let $X$ be a~normed linear space.
   Let $\F$ be
   \begin{enumerate}[\textup\bgroup (a)\egroup]
   \item
   the class of all continuous functions on $X$,
   or
   \item
   the class of all continuous functions on $X$ that are $p_1$-times
   G\^ateaux differentiable for some $p_1\in\N\cup\{\infty\}$,
   or
   \item
   the class of all $p_2$-times
   Fr\'echet differentiable functions
   on $X$ for some $p_2\in\N\cup\{\infty\}$,
   or
   \item
   the class of all
   $C^{p_3}$-smooth functions
   on $X$ for some $p_3\in\N\cup\{\infty\}$,
   or
   \item
   the class of all point-wise $\alpha_1$-H\"older continuous functions
   on $X$ for some $\alpha_1\in (0,1]$, or
   \item
   the class of all locally $\alpha_2$-H\"older continuous functions
   on $X$ for some $\alpha_2\in (0,1]$, in particular
   \\\vspace\itemsep
   the class of all
   locally Lipschitz continuous functions on $X$, or
   \item
   the intersection of two or several of the above classes \textup(for some $p_1$, $p_2$, $p_3$, $\alpha_1$, $\alpha_2$\textup).

   \end{enumerate}
   Let $\F^+$ be the class of all non-negative functions from $\F$.
   If $X$ admits $\F$-partition of unity,
   then every open set $U\subset X$ admits $\F^+$-partition of unity.
\end{lemma}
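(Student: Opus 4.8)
The plan is to split the statement into two independent reductions: first \emph{relativizing} a partition of unity from the whole space $X$ to the open subset $U$, and second passing from a partition of unity whose members may change sign to one with non-negative members. The first reduction can either be quoted from the relativization proposition of this section (announced in the introduction) or carried out directly as follows. Given an open cover $\mathcal U$ of $U$, I would, after intersecting its members with $U$, assume each member of $\mathcal U$ is contained in $U$; then I choose for every $x\in U$ a radius $r_x>0$ with $3r_x<\dist(x,X\setminus U)$ and with $B(x,r_x)$ contained in some member of $\mathcal U$, and, using paracompactness and normality of the metric space $U$, pass to a cover $\{W_i\}_{i\in I}$ of $U$ that is locally finite in $U$ and refines $\{B(x,r_x/3)\}_{x\in U}$ (so that each $\overline{W_i}$ lies in $U$, in a member of $\mathcal U$, and in fact in $\{z\setcolon\dist(z,X\setminus U)\ge\delta_i\}$ for some $\delta_i>0$), together with two successive shrinkings $\{W_i''\}$, $\{W_i'\}$ with $\overline{W_i''}\subseteq W_i'$ and $\overline{W_i'}\subseteq W_i$. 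Applying the hypothesis to the two-element open cover $\{W_i',\,X\setminus\overline{W_i''}\}$ of $X$ (which really covers $X$, since a point outside $W_i'$ cannot lie in $\overline{W_i''}\subseteq W_i'$) and summing those members of the resulting $\F$-partition of unity that are supported in $W_i'$ yields a function $g_i\in\F$ with $g_i\equiv1$ on $\overline{W_i''}$ and $\{g_i\ne0\}\subseteq W_i'$.

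Now comes the key step, the non-negativization. I would put $S:=\sum_{i\in I}g_i^2$ and define
\[
  \hat\psi_i:=\begin{cases} g_i^2/S & \text{on }U,\\ 0 & \text{on }X\setminus U.\end{cases}
\]
On $U$ the sum $S$ is locally finite (because $\{W_i'\}$ is locally finite in $U$), hence locally a finite sum of members of $\F$, and $S>0$ on $U$ since every $x\in U$ lies in some $W_i''$, where $g_i(x)=1$; so on $U$ each $\hat\psi_i$ is locally a ratio of members of $\F$ with strictly positive denominator. Near any point of $X\setminus U$ every $\hat\psi_i$ vanishes identically: if $\hat\psi_i(x)\ne0$ then $g_i(x)\ne0$, so $x\in W_i'$ and $\dist(x,X\setminus U)\ge\delta_i$, whence $\hat\psi_i\equiv0$ on the open $\delta_i$-neighbourhood of any fixed point of $X\setminus U$. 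Therefore $\hat\psi_i\in\F$, and plainly $\hat\psi_i\ge0$, i.e.\ $\hat\psi_i\in\F^+$. Finally $\sum_i\hat\psi_i\equiv1$ on $U$, $\spt\hat\psi_i\subseteq\overline{W_i'}\subseteq W_i$ lies in a member of $\mathcal U$, and $\{\spt\hat\psi_i\}$ is locally finite in $U$; thus $\{\hat\psi_i\}_{i\in I}$ is the desired locally finite $\F^+$-partition of unity in $U$ subordinated to $\mathcal U$.

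What remains is to record why $\F$ is stable under the operations used — locally finite sums, squaring, and forming reciprocals of strictly positive functions — for each of the classes (a)--(g). For (a) this is immediate; for (b), (c), (d) it follows from the chain and product rules for G\^ateaux, Fr\'echet, and $C^{p}$ maps respectively (the one-variable maps $t\mapsto t^2$ and $t\mapsto 1/t$ on $(0,\infty)$ being $C^\infty$), applied locally where the relevant sum is finite; for (e) and (f) one passes, for a fixed base point, to a neighbourhood on which the finitely many $g_i$ involved are bounded, so that $t\mapsto t^2$ and $t\mapsto1/t$ are Lipschitz on the relevant range and composition preserves (pointwise, resp.\ local) $\alpha$-H\"older continuity; and (g) follows by intersecting. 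I expect the genuine obstacle in the whole argument to be the first reduction: the refinement must be arranged so that the closures of the shrunk members keep a positive distance from $X\setminus U$, for otherwise the bumps $g_i$, and with them the $\hat\psi_i$, would fail to be globally defined members of $\F$ near $\boundary U$; once that is in place, the rest is bookkeeping.
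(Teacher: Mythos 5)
Your proof is correct, but it organizes the argument differently from the paper and leans on heavier topological machinery. The paper performs the non-negativization first and globally: from any $\F$-partition of unity $\{\psi_\gamma\}$ on $X$ it passes to $\{\psi_\gamma^2/\sum_\beta\psi_\beta^2\}$, so that $X$ itself admits $\F^+$-partitions of unity; the relativization to $U$ is then done by an explicit annular decomposition, covering $U$ by the sets $U_n=\{x\setcolon d_n<\dist(x,X\setminus U)<d_{n-3}\}$ with $d_n=1/n$, taking for each $n$ an $\F^+$-partition of unity on all of $X$ subordinated to $\{U_n\cap G\setcolon G\in\mathcal U\}\cup\{X\setminus F_n\}$, discarding the members whose supports miss $F_n$, and renormalizing by the total sum $w$, which is pinched between $1$ and $3$ because each point meets at most three of the $U_n$; this uses nothing beyond the hypothesis itself. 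You instead relativize first, invoking Stone's paracompactness theorem and the shrinking lemma to build a locally finite refinement whose members stay at uniformly positive distance $\delta_i$ from $X\setminus U$, extract Urysohn-type bumps $g_i$ from two-element covers of $X$, and only then square and normalize. Both routes are sound; yours makes visible that only $\F$-Urysohn functions for pairs (closed set, open neighbourhood) are really needed, while the paper's stays self-contained and keeps explicit quantitative control of the overlap. You correctly identify the two delicate points: (i) forcing $\{\hat\psi_i\neq 0\}$ to keep distance $\ge\delta_i$ from $X\setminus U$, so that the zero-extension is locally a member of $\F$ near $\boundary U$ (the paper gets the analogous fact for free from $\spt\phi_{n,\gamma}\subset U_n$), and (ii) the closure of each class under locally finite sums, products and reciprocals of functions bounded below by $1$ — a verification the paper leaves equally implicit.
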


\begin{proof}
   We get immediately that $X$ admits $\F^+$-partition of unity.
   Indeed,
   given a~locally finite partition of unity $\{\psi_\gamma\}_{\gamma\in\Gamma}\subset \F$, we put
   $\widetilde \psi_\gamma = \psi_\gamma^2 / \sum_{\beta\in\Gamma} \psi_\beta^2$ for every $\gamma\in\Gamma$.
   Then $\sum_{\gamma\in\Gamma} \widetilde \psi_\gamma = 1$ and, for every $\gamma\in\Gamma$, $\widetilde \psi_\gamma \in \F^+$
   and $\spt \widetilde \psi_\gamma = \spt  \psi_\gamma$.

   Let $U\subset X$ be an arbitrary open set and
   let $\mathcal U$ be an open cover of $U$.
   Set $F=X\setminus U$.
   Define
   \[
     d_n =
     \begin{cases}
         1/n,        & n \in \N,
         \\
         \infty,     & n \le 0.
     \end{cases}
   \]

   Fix $n\in \N $. Let
\begin{align*}
    U_n &= \{ x \in X \setcolon d_n <   \dist(x, F) <   d_{n-3} \},
\\
    F_n &= \{ x \in X \setcolon d_{n-1} \le \dist(x, F) \le d_{n-2} \},
\\
    \mathcal U _ n  &=
    \{ U_n \cap G \setcolon G \in \mathcal U \} \cup \{ X \setminus F_n \}
    .
\end{align*}
    Then $\mathcal U_n$ is an open cover of $X$. 
    Let
    $\{\phi_{n,\gamma}\}_{\gamma \in \AAAA_n}$
    be an $\F^+$-partition of unity subordinated to $\mathcal U_n$.
    Let $\BBBB_n = \{ \gamma \in \AAAA_n \setcolon \spt \phi_{n,\gamma}
    \not\subset X\setminus F_n \}$.
    Then
    $\{\phi_{n,\gamma}\}_{\gamma \in \BBBB_n}$
    is subordinated to $\mathcal U$,
    $\sum_{\gamma \in \BBBB_n} \phi_{n,\gamma} (x) = 1$ for
    every $x \in F_n$
    and $\spt \phi_{n,\gamma} \subset U_n$ for every $\gamma \in \BBBB_n$.

    The family $\{ \phi_{n,\gamma} \setcolon n\in \N, \gamma \in \BBBB_n \}$
    is subordinated to $\mathcal U$ and locally finite in $U$.
    Every $x \in U$ belongs to one or two of the sets $F_n$,
    and at most three sets $U_n$.
    Thus
    \begin{equation}\label{eq:wdef}
        1\le w(x) := \sum_{ n\in \N, \gamma \in \BBBB_n }  \phi_{n,\gamma} \le 3
    \end{equation}
    for every $x \in U$.
    For every $n\in\N$ and $\gamma\in\BBBB_n$, let
    $\psi_{n,\gamma} (x) = \phi_{n,\gamma} (x)/ w(x)$
    for $x\in U$
    and note again that the sum in \eqref{eq:wdef} is finite in a~neighborhood of every point of
    $U\supset U_n \supset \spt \phi_{n,\gamma}$.
    For every $n\in\N$ and $\gamma\in\BBBB_n$, extend $\psi_{n,\gamma}$ by setting $\psi_{n,\gamma}(x) = 0$ for $x\in X\setminus U$.
    Then $\{ \psi_{n,\gamma} \} _ {n\in \N, \gamma \in \BBBB_n } $
    is a~locally finite $\F^+$-partition of unity in $U$.
\end{proof}

\begin{remark}\label{rem:part}
\myParBeforeItems
\begin{enumerate}[(a)]

   \item Every metric space admits partition of unity formed by continuous functions.
         Moreover, it even admits partition of unity formed by Lipschitz
         continuous
         (hence locally Lipschitz continuous, as used in Lemma~\ref{l:XbezFjakoDefF})
         functions
         (see \cite[the proof of Theorem]{Fried}).

   \item If $X$ is a~WCD
         Banach space, then $X$ admits partition of unity formed by continuous functions that are G\^ateaux differentiable
         \cite[Corollary~VIII.3.3]{DGZ}. The class of WCD spaces contains all separable and all reflexive Banach spaces
         \cite[Example~VI.2.2]{DGZ}.

   \item There is a~Banach space that is not WCD and admits $C^\infty$-smooth partition of unity,
         e.g.\ JL space of W.B.~Johnson and J.~Lindenstrauss
         (see \cite[p.~369]{DGZ}, for the definition of JL space, see \cite{JLspace}).

   \item If $X^*$ is a~WCG
         Banach space, then $X$ admits $C^1$-smooth partition of unity \cite[Corollary~VIII.3.11]{DGZ}.
         This includes all reflexive spaces as well as all spaces with a~separable dual.

   \item If a~Banach space $X$ admits a~LUR norm whose dual norm is also LUR, then $X$ admits $C^1$-smooth partition of unity
         \cite[Theorem~VIII.3.12~(i)]{DGZ}. Hence, if $K^{(\omega_1)}=\emptyset$, then $C(K)$ admits $C^1$-smooth partition of unity
         \cite[Corollary~VIII.3.13]{DGZ}. Note that if $K^{(\omega_0)}=\emptyset$, then $C(K)$ admits even $C^\infty$-smooth partition of unity
         (see \cite{DGZ-1990}).

   \item $L_p$ spaces with $p\in[1,\infty)$ admit partition of unity of the same smoothness order as their canonical norms, i.e. $C^\infty$-smooth partition
         of unity for $p$ even integer, $C^{p-1}$-smooth partition of unity for $p$ odd integer and, if $p$ is not an~integer, $C^{[p]}$-smooth partition
         of unity, where $[p]$ denotes the integer part of $p$ (see \cite[Corollary~VIII.3.11]{DGZ} and \cite[Theorem~V.1.1]{DGZ}).

   \item
   All Hilbert spaces
   and
   spaces $c_0(\Gamma)$ with arbitrary set $\Gamma$
   admit $C^\infty$-smooth partition of unity \cite[Theorem~2 and Theorem~3]{T} (see also \cite[16.16]{KM}).


\end{enumerate}
\end{remark}

\begin{remark}\label{rem:bumps} Let $p\in\N\cup\{\infty\}$.
\myParBeforeItems
\begin{enumerate}[(a)]

    \item
             It is
             an open problem whether every Banach space that admits
             a~$C^p$-smooth bump must also admit $C^p$-smooth partition of unity
             (see \cite[p.~370, Problem~VIII.1]{DGZ}, \cite[p.~179]{FM} and \cite[p.~172]{KM}).

    \item
   The existence of a~$C^p$-smooth bump implies the existence of $C^p$-smooth partitions of unity
   for example for separable spaces (see \cite{BF}
   or \cite[p.~360]{DGZ})
   and for reflexive spaces \cite[Theorem VIII.3.2]{DGZ}.

   More generally, it also holds
   for Banach spaces whose dual is WCG \cite[16.13(4)]{KM},
   for WCD Banach spaces \cite[p.~351, Theorem VIII.3.2]{DGZ} (cf.\ also \cite[53.15 and 16.18]{KM}),
             which includes reflexive spaces and separable spaces as we already noted,
   and for duals of Asplund spaces \cite[53.15 and 16.18]{KM}.


A result on Banach spaces with PRI and $C^p$-smooth partitions of
unity can be found in \cite[Corollary 4]{Haydon}. In particular,
Banach spaces with "nice" ("separable") PRI and with a~$C^p$-smooth
bump function admit $C^p$-smooth partition of unity,
see \cite[Remark 3.3]{GTWZ}, \cite[page 369, lines 26--27]{DGZ} and \cite[16.18]{KM}.

\end{enumerate}
\end{remark}



\section{Vector-valued functions in infinite dimensional domain}\label{sec:infinf}
\begin{theorem}\label{thm:infinf}
Let $X$, $Y$ be normed linear spaces, $F\subset X$ a~closed set,
$f\fcolon F\to Y$ an~arbitrary function
and $L\fcolon F\to\mathcal L(X,Y)$ a~function that is Baire one on $F$.
Then there exists a~function $\bar{f}\fcolon X\to Y$ such that
\begin{enumerate}[\textup\bgroup (i)\egroup]
   \item\label{thm:infinf:item:ext}
               $\bar{f}=f$ on $F$,

   \item\label{thm:infinf:item:cont}
               if $a\in F$ and $f$ is continuous at $a$ \textup(with respect to $F$\textup),
               then $\bar{f}$ is continuous at $a$,

   \item\label{thm:infinf:item:hoelder}
                if $a\in F$, $\alpha\in (0,1]$ and $f$ is $\alpha$-H\"{o}lder continuous at $a$ \textup(with respect to $F$\textup),
                then $\bar{f}$ is $\alpha$-H\"{o}lder continuous at $a$;
                in particular, if $f$ is Lipschitz at $a$
                \textup(with respect to $F$\textup), then $\bar{f}$ is Lipschitz at $a$,

   \item\label{thm:infinf:item:frechet}
                if $a\in F$ and $L(a)$ is a~relative Fr{\'e}chet derivative of $f$ at $a$
                \textup(with respect to $F$\textup), then $(\bar{f})^\prime(a)=L(a)$,

   \item\label{thm:infinf:item:contcomp}
                $\bar{f}$ is continuous on $X\setminus F$,

   \item\label{thm:infinf:item:smoothcomp}
                if
                $X$ admits
                $\mathcal F$-partition of unity
                where $\mathcal F$ is a~fixed class of functions on $X$ from Lemma~\ref{l:XbezFjakoDefF},
                then
                $\bar{f}|_{X\setminus F}$ is of class~$\mathcal F$.\footnote{\label{foot:restrclass}\relax
To provide a~formal definition for \itemref{thm:infinf:item:smoothcomp}, we say that $g|_U$ is of class $\mathcal F$
(on an open set $U$)
if for every $x\in U$, there is a~neighborhood $V$ of $x$ such that $g|_V$ is a~restriction of a~function from $\mathcal F$.}
\end{enumerate}
\end{theorem}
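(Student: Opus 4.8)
The plan is to reduce the statement to the Baire one extension theorem (Theorem~\ref{thm:B1ext}) combined with a Whitney-type gluing on the open set $X\setminus F$, the partition of unity being supplied by Lemma~\ref{l:XbezF}. One may assume $F\neq\emptyset$ and $F\neq X$, the remaining cases being trivial.

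First I would apply Theorem~\ref{thm:B1ext} to the metric space $(X,\norm{\cdot})$, the closed set $F$, the normed space $Z=\mathcal L(X,Y)$ and the Baire one function $L$, obtaining a continuous $A\fcolon X\setminus F\to\mathcal L(X,Y)$ satisfying \eqref{eq:ALP3} at every $a\in\boundary F$ (conditions \eqref{eq:continuity} and \eqref{eq:boundedness} are not needed here; they enter only in the finite-dimensional refinement). Next, by Lemma~\ref{l:XbezF} the open set $X\setminus F$ admits an $\mathcal F^{+}$-partition of unity; applied to the open cover $\{\,B(z,\tfrac18\dist(z,F))\setcolon z\in X\setminus F\,\}$ it yields a locally finite $\mathcal F^{+}$-partition of unity $\{\psi_\gamma\}_{\gamma\in\Gamma}$ in $X\setminus F$ and points $x_\gamma\in X\setminus F$ with $\spt\psi_\gamma\subset B(x_\gamma,\tfrac18\dist(x_\gamma,F))$; for each $\gamma$ fix also $a_\gamma\in F$ with $\norm{x_\gamma-a_\gamma}\le 2\dist(x_\gamma,F)$. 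Set $\bar f=f$ on $F$ and
\[
  \bar f(x)=\sum_{\gamma\in\Gamma}\psi_\gamma(x)\bigl(f(a_\gamma)+A(x_\gamma)(x-a_\gamma)\bigr),\qquad x\in X\setminus F .
\]
Then \itemref{thm:infinf:item:ext} is immediate. For \itemref{thm:infinf:item:smoothcomp} observe that near each point of $X\setminus F$ only finitely many $\psi_\gamma$ are nonzero, so there $\bar f$ agrees with a finite sum $\sum_\gamma\psi_\gamma g_\gamma$ of functions on $X$, where $g_\gamma(x)=f(a_\gamma)+A(x_\gamma)(x-a_\gamma)$ is affine (hence $C^\infty$); since each class $\mathcal F$ of Lemma~\ref{l:XbezFjakoDefF} is stable under finite sums and under multiplication by an affine map, such a sum lies in $\mathcal F$, which gives \itemref{thm:infinf:item:smoothcomp}, and the case $\mathcal F=$ continuous functions (always available) gives \itemref{thm:infinf:item:contcomp}. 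The key point is that the possibly locally unbounded operators $L(a_\gamma)$ never enter the formula; their ``tamed'' surrogates $A(x_\gamma)$ do, and supplying such an $A$ is exactly what Theorem~\ref{thm:B1ext} achieves.

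It remains to check \itemref{thm:infinf:item:cont}, \itemref{thm:infinf:item:hoelder}, \itemref{thm:infinf:item:frechet} at $a\in F$. If $a\in\interior F$ then $\bar f=f$ on a neighbourhood of $a$ and the claims are inherited, so let $a\in\boundary F$ (the sub-case when $a$ is isolated in $F$ is easier, since then $a_\gamma=a$ for every $\gamma$ relevant to $x$ near $a$). The basic geometric observation is that if $\psi_\gamma(x)\neq 0$ for some $x$ close to $a$, then $\dist(x_\gamma,F)$, $\norm{x-x_\gamma}$, $\norm{x_\gamma-a_\gamma}$, $\norm{x-a_\gamma}$, $\norm{x_\gamma-a}$ are all $O(\norm{x-a})$: indeed $\dist(x,F)\le\norm{x-a}$, the ball sizes make $\dist(x,F)$ and $\dist(x_\gamma,F)$ comparable, and $\norm{x_\gamma-a_\gamma}\le 2\dist(x_\gamma,F)$. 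For $x\in X\setminus F$,
\[
  \bar f(x)-f(a)-L(a)(x-a)=\sum_{\gamma}\psi_\gamma(x)\Bigl(\bigl[f(a_\gamma)-f(a)-L(a)(a_\gamma-a)\bigr]+\bigl[A(x_\gamma)-L(a)\bigr](x-a_\gamma)\Bigr).
\]
If $L(a)$ is the relative Fr\'echet derivative of $f$ at $a$, then $\norm{f(a_\gamma)-f(a)-L(a)(a_\gamma-a)}=o(\norm{a_\gamma-a})=o(\norm{x-a})$; and $\norm{[A(x_\gamma)-L(a)](x-a_\gamma)}\le\norm{A(x_\gamma)-L(a)}\,\norm{x-a_\gamma}\le C\,\norm{A(x_\gamma)-L(a)}\,\dist(x_\gamma,F)=C\bigl(\norm{A(x_\gamma)-L(a)}\tfrac{\dist(x_\gamma,F)}{\norm{x_\gamma-a}}\bigr)\norm{x_\gamma-a}$, which by \eqref{eq:ALP3} is $o(1)\cdot O(\norm{x-a})$ uniformly over the finitely many relevant $\gamma$ as $x\to a$. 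Summing, using $\sum_\gamma\psi_\gamma\equiv 1$ on $X\setminus F$, and noting that for $x\in F$ near $a$ the left-hand side equals $f(x)-f(a)-L(a)(x-a)=o(\norm{x-a})$, we obtain $(\bar f)'(a)=L(a)$, i.e.\ \itemref{thm:infinf:item:frechet}. The analogous estimate for $\bar f(x)-f(a)=\sum_\gamma\psi_\gamma(x)\bigl(f(a_\gamma)-f(a)+A(x_\gamma)(x-a_\gamma)\bigr)$, together with $\norm{A(x_\gamma)}\le\norm{L(a)}+\norm{A(x_\gamma)-L(a)}$ and again $\norm{A(x_\gamma)-L(a)}\,\dist(x_\gamma,F)=o(\norm{x-a})$, shows that $\bar f(x)\to f(a)$ when $f$ is continuous at $a$, and $\norm{\bar f(x)-f(a)}=O(\norm{x-a}^\alpha)$ when $f$ is $\alpha$-H\"older at $a$ (on a bounded neighbourhood, $\norm{L(a)}\,\norm{x-a_\gamma}=O(\norm{x-a})=O(\norm{x-a}^\alpha)$); for $x\in F$ these are trivial. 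This yields \itemref{thm:infinf:item:cont} and \itemref{thm:infinf:item:hoelder}.

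I expect the only genuine difficulty to be bookkeeping: choosing the ball radii and the near-point constant so that the comparabilities above hold, and recognizing that \eqref{eq:ALP3} delivers exactly the rate $\norm{A(x_\gamma)-L(a)}\,\dist(x_\gamma,F)=o(\norm{x_\gamma-a})$ required to absorb the off-base error $[A(x_\gamma)-L(a)](x-a_\gamma)$. The hard analytic content --- the existence of a continuous $A$ satisfying \eqref{eq:ALP3} --- has already been isolated in Theorem~\ref{thm:B1ext}, so what remains is an organized application of it together with a routine verification that the function classes $\mathcal F$ of Lemma~\ref{l:XbezFjakoDefF} are closed under the operations used in building $\bar f$.
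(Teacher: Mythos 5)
Your proposal is correct and follows essentially the same route as the paper: apply Theorem~\ref{thm:B1ext} to get the continuous surrogate $A$, take a partition of unity subordinated to balls of radius a fixed small fraction of $\dist(\cdot,F)$ via Lemma~\ref{l:XbezF}, choose near-points $a_\gamma=\widehat{x_\gamma}$ with $\norm{x_\gamma-a_\gamma}\le 2\dist(x_\gamma,F)$, glue by $\bar f(x)=\sum_\gamma\psi_\gamma(x)\bigl(f(a_\gamma)+A(x_\gamma)(x-a_\gamma)\bigr)$, and estimate exactly as you do (the paper's identity \eqref{Est} and its variant for the derivative are precisely your two decompositions, and only \eqref{eq:ALP3} is used, as you note). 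The only cosmetic difference is your choice of ball radius $\tfrac18\dist(z,F)$ versus the paper's $\tfrac12\dist(z,F)$, which merely changes the comparability constants.
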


\begin{proof}
If $F=\emptyset$, the theorem trivially holds. Further suppose that $F$ is nonempty.
For every $x\in X$, we set
\begin{equation}\label{r(x)}
r(x):=\frac{1}{20} \dist(x,F).
\end{equation}
Further, for every $x\in X\setminus F$, we choose any point $\widehat x\in F$
such that
\begin{equation}\label{hat}
\left\|x-\widehat x\tinyspaceafterwidehat \right\|_X\leq 2\dist(x,F).
\end{equation}

If (\ref{thm:infinf:item:smoothcomp}) is under consideration, $X$ admits $\mathcal F$-partition of unity.
If this is not the case,
it admits at least continuous partition of unity
(since $X$ is a~metric space)
\bledenWSxviOK
and we let $\mathcal F$ be the class of continuous functions on $X$.
\eledenWSxvi

By Lemma~\ref{l:XbezF},
there exists a~non-negative locally finite
$\mathcal F$-partition of unity
$\{\phi_\gamma\}_{\gamma\in\Gamma}$ on $X\setminus F$ subordinated
to the covering $\{B(x,10r(x))\setcolon x\in X\setminus F\}$. So, in particular,
\begin{equation}\label{Pr0}
\{\phi_\gamma\}_{\gamma\in\Gamma}\subset{\mathcal F},
\end{equation}
\begin{equation}\label{Pr1}
0\leq\phi_\gamma
{\rm\ for\ every\ }\gamma\in\Gamma,
\end{equation}
\begin{equation}\label{Pr2}
\sum\limits_{\gamma\in\Gamma}\phi_\gamma(x)=1 {\rm\ for\ every\ } x\in X\setminus F
\end{equation}
and for every $\gamma\in\Gamma$ there is $x_\gamma\in X\setminus F$ such that
\begin{equation}\label{Pr3}
\spt\phi_\gamma\subset B(x_\gamma,10r(x_\gamma)).
\end{equation}

For every $x\in X\setminus F$, we denote
\begin{equation}\label{Sx1}
\ixsetGx :=\{\markTC \gamma\in \Gamma \setcolon B(x,10r(x))\cap B(x_\gamma,10r(x_\gamma))\neq\emptyset\}.
\end{equation}
Clearly, if $\markTD \gamma\in \markTD \Gamma \setminus \ixsetGx$ then $\phi_\gamma(x)=0$
by \eqref{Pr3}. Moreover, if $\markTA \gamma\in \ixsetGx$ \markTD then
\[
        \left|r(x)-r(x_\gamma)\right|
        \leq
        \Lip(r)  \left\|x-x_\gamma\right\|_X
        =
        \frac{1}{20}\left\|x-x_\gamma\right\|_X
        \overset{
                \eqref{Sx1}
        }{
        \leq
        }
        \frac{1}{20}\left(10r(x)+10r(x_\gamma)\right)
.
\]
\markTD
Hence
\begin{equation}\label{Pr4}
\frac{1}{3}\leq\frac{r(x)}{r(x_\gamma)}\leq 3\qquad {\rm{whenever}}\ \markTD \gamma\in \ixsetGx.
\end{equation}

\smallbreak
Let $A\fcolon (X\setminus F)\to\mathcal L(X,Y)$ be the function constructed
in Theorem \ref{thm:B1ext} (with
$Z=\mathcal L(X,Y)$).

Define $\bar f\fcolon X\to Y$ by
\begin{equation}\label{Ext_of_f}
\bar f(x):=
     \begin{cases}
\,
f(x) & \text{if $x\in F$,}\\
\,
\sum\limits_{\gamma\in\Gamma}\phi_\gamma(x)\left[f(\widehat{x_\gamma})+
A(x_\gamma)(x-\widehat{x_\gamma})\right]
& \text{if $x\in X\setminus F$}.
\end{cases}
\end{equation}

\smallbreak
Obviously,
$\bar{f}=f$ on $F$, which proves (\ref{thm:infinf:item:ext}).

Since linear mappings are $C^\infty$-smooth and the partition of unity
$\{\phi_\gamma\}_{\gamma\in\Gamma}$ is locally finite,
we easily conclude
using~\eqref{Pr0}
that $\bar f | _{X\setminus F}$ is of class $\mathcal F $.
        Assertions
(\ref{thm:infinf:item:smoothcomp}), if under consideration, and (\ref{thm:infinf:item:contcomp}) are therefore fulfilled.

\smallbreak
Let $a\in F$. For arbitrary $x\in X\setminus F$ and $\markTA \gamma\in \ixsetGx$,
by \eqref{r(x)}, \eqref{hat}, \eqref{Sx1} and \eqref{Pr4}, we get
\begin{equation}\label{E_1}
\left\|x_\gamma-x\right\|_X
\leq
10r(x_\gamma)+10r(x)
\leq
40r(x)
=
2\dist(x,F),
\end{equation}
 and likewise with $x_\gamma$ in the place of~$x$ on the right-hand side
\begin{equation}
\left\|x_\gamma-x\right\|_X\leq 10r(x_\gamma)+10r(x)\leq 40r(x_\gamma)
=
2\dist(x_\gamma,F),
\end{equation}
\begin{equation}
\left\| \tinyspacebeforewidehat \widehat x-x_\gamma\right\|_X\leq\left\| \tinyspacebeforewidehat \widehat x-x\right\|_X+\left\|x-x_\gamma\right\|_X
\leq 2\dist(x,F)+2\dist(x,F)=4\dist(x,F),
\end{equation}
$$
        \left\|\widehat{x_\gamma}-x_\gamma\right\|_X
    \leq
        2 \dist(x_\gamma, \theset)
    \leq
        2\left\|\tinyspacebeforewidehat \widehat x-x_\gamma\right\|_X
\leq 8\dist(x,F),$$
\begin{equation}\label{E_3}
\left\|\widehat{x_\gamma}-\widehat x\tinyspaceafterwidehat \right\|_X\leq\left\|\widehat{x_\gamma}-x_\gamma\right\|_X
+\left\|x_\gamma-\widehat x \tinyspaceafterwidehat \right\|_X
\leq 8\dist(x,F)+4\dist(x,F)=12\dist(x,F),
\end{equation}
\begin{equation}\label{E_4}
\left\|\widehat{x_\gamma}-x\right\|_X\leq\left\|\widehat{x_\gamma}-x_\gamma\right\|_X
+\left\|x_\gamma-x\right\|_X
\leq 8\dist(x,F)+2\dist(x,F)=10\dist(x,F),
\end{equation}
 and likewise
\begin{equation}\label{E_4.1}
\left\|\widehat{x_\gamma}-x\right\|_X\leq\left\|\widehat{x_\gamma}-x_\gamma\right\|_X
+\left\|x_\gamma-x\right\|_X
\leq 2\dist(x_\gamma,F)+2\dist(x_\gamma,F)=4\dist(x_\gamma,F).
\end{equation}
Since $\dist(x,F)\leq\left\|x-a\right\|_X$, by \eqref{hat}, \eqref{E_1} and \eqref{E_4},
we obtain
\begin{equation}\label{E_1_a}
\left\|x_\gamma-a\right\|_X\leq\left\|x_\gamma-x\right\|_X+\left\|x-a\right\|_X
\leq 3\left\|x-a\right\|_X,
\end{equation}
\begin{equation}\label{E_2_a}
\left\|\widehat{x_\gamma}-a\right\|_X\leq\left\|\widehat{x_\gamma}-x\right\|_X
+\left\|x-a\right\|_X\leq 11\left\|x-a\right\|_X,
\end{equation}
\begin{equation}\label{E_3_a}
\left\|\tinyspacebeforewidehat \widehat x-a\right\|_X\leq\left\|\tinyspacebeforewidehat \widehat x-x\right\|_X+\left\|x-a\right\|_X
\leq 3\left\|x-a\right\|_X.
\end{equation}

\smallbreak
Since
assertions
(\ref{thm:infinf:item:cont}), (\ref{thm:infinf:item:hoelder})
and (\ref{thm:infinf:item:frechet}) are clearly satisfied for $a\in\interior(F)$,
we will further assume that $a\in\boundary F$. If $x\in X\setminus F$,
by \eqref{Pr1}, \eqref{Pr2}, \eqref{Pr3}, \eqref{Sx1} and \eqref{Ext_of_f}, we obtain
\begin{align}
\left\|\bar f(x)-\bar f(a)\right\|_Y
&=\left\|\sum\limits_{\gamma\in\Gamma}\phi_\gamma(x)\left[f(\widehat{x_\gamma})
+A(x_\gamma)(x-\widehat{x_\gamma})-f(a)\right]\right\|_Y\nonumber\\
&=\left\|\sum\limits_{\gamma\in\Gamma}\phi_\gamma(x)\left[f(\widehat{x_\gamma})-f(a)
+L(a)(x-\widehat{x_\gamma})+(A(x_\gamma)-L(a))(x-\widehat{x_\gamma})\right]\right\|_Y\nonumber\\
&\leq\sum\limits_{\markTA \gamma\in \ixsetGx}\phi_\gamma(x)\left\|f(\widehat{x_\gamma})-f(a)\right\|_Y
+\sum\limits_{\markTA \gamma\in \ixsetGx}\phi_\gamma(x)\left\|L(a)\right\|_{\mathcal L(X,Y)}
\left\|x-\widehat{x_\gamma}\right\|_X\label{Est}\\
& \qquad+\sum\limits_{\markTA \gamma\in \ixsetGx}\phi_\gamma(x)\left\|A(x_\gamma)-L(a)\right\|_{\mathcal L(X,Y)}
\dist(x_\gamma,F)\frac{\left\|x-\widehat{x_\gamma}\right\|_X}{\dist(x_\gamma,F)}.\nonumber
\end{align}

\smallbreak
First suppose that $f$ is continuous at $a$ (with respect to $F$) and fix $\eps_1>0$.
There exists $\delta_1>0$ such that
\begin{equation}\label{Cont}
\left\|f(z)-f(a)\right\|_Y\leq\eps_1\qquad \text{for\ every\ }z\in F,
\ \left\|z-a\right\|_X<\delta_1.
\end{equation}
By \eqref{eq:ALP3} 
from Theorem~\ref{thm:B1ext}, there exists $\delta_2>0$ such that
\begin{equation}\label{Bd_beh1}
\left\|A(t)-L(a)\right\|_{\mathcal L(X,Y)}\dist(t,F)<\eps_1
\qquad \text{for\ every\ }t\in X\setminus F,\ \left\|t-a\right\|_X<\delta_2.
\end{equation}

Let $x\in X\setminus F$ be arbitrary with $\left\|x-a\right\|_X<\min\left(\eps_1,\frac{\delta_1}{11},\frac{\delta_2}{3}\right)$.
Then we deduce
from \eqref{E_1_a} and \eqref{E_2_a}
that
$\left\|x_\gamma-a\right\|_X<\delta_2$
and $\left\|\widehat{x_\gamma}-a\right\|_X<\delta_1$
for every $\markTA \gamma\in \ixsetGx$.
Thus by \eqref{Pr1}, \eqref{Pr2}, \eqref{E_4}, \eqref{E_4.1}, \eqref{Est},
\eqref{Cont}, \eqref{Bd_beh1} and $\dist(x,F)\leq\left\|x-a\right\|_X$,
we obtain
\begin{align}
\left\|\bar f(x)-\bar f(a)\right\|_Y&\leq\eps_1
+10\left\|L(a)\right\|_{\mathcal L(X,Y)}\eps_1+4\,\eps_1
=\left(5+10\left\|L(a)\right\|_{\mathcal L(X,Y)}\right)\eps_1.\nonumber
\end{align}
Since $\eps_1>0$ was arbitrary, $\bar f$ is continuous at $a$ and thus (\ref{thm:infinf:item:cont}) is proved.

\smallbreak
%

Next, suppose that $\alpha\in (0,1]$ and $f$ is $\alpha$-H\"older continuous at $a$ (with respect to $F$).
Then there exist $K>0$ and $\delta_3>0$ such that
\begin{equation}\label{Hoelder}
\left\|f(z)-f(a)\right\|_Y\leq K\left\|z-a\right\|^\alpha_X\qquad \text{for\ every\ }z\in F,
\ \left\|z-a\right\|_X<\delta_3.
\end{equation}
By \eqref{eq:ALP3} from Theorem~\ref{thm:B1ext}, there exists $\delta_4>0$ such that
\begin{equation}\label{Bd_beh2}
\left\|A(t)-L(a)\right\|_{\mathcal L(X,Y)}\dist(t,F)<\left\|t-a\right\|_X
\qquad \text{for\ every\ }t\in X\setminus F,\ \left\|t-a\right\|_X<\delta_4.
\end{equation}

Let $x\in X\setminus F$ such that $\left\|x-a\right\|_X<\min\left(\frac{\delta_3}{11},\frac{\delta_4}{3},1\right)$.
Then, for every $\markTA \gamma\in \ixsetGx$, using \eqref{E_1_a} and \eqref{E_2_a}
we get $\left\|x_\gamma-a\right\|_X<\delta_4$ and $\left\|\widehat{x_\gamma}-a\right\|_X<\delta_3$.
Similarly as above, by \eqref{Pr1}, \eqref{Pr2}, \eqref{E_4}, \eqref{E_4.1}, \eqref{E_1_a},
\eqref{E_2_a}, \eqref{Est}, \eqref{Hoelder}, \eqref{Bd_beh2} and $\dist(x,F)\leq\left\|x-a\right\|_X$,
we get
\begin{align}
\left\|\bar f(x)-\bar f(a)\right\|_Y&\leq K\sum\limits_{\markTA \gamma\in \ixsetGx}\phi_\gamma(x)\left\|\widehat{x_\gamma}-a\right\|^{\alpha}_X
+10\left\|L(a)\right\|_{\mathcal L(X,Y)}\left\|x-a\right\|_X\\
& \qquad +\ 4\sum\limits_{\markTA \gamma\in \ixsetGx}\phi_\gamma(x)\left\|x_\gamma-a\right\|_X\nonumber\\
&\leq\left(11^{\alpha}K+10\left\|L(a)\right\|_{\mathcal L(X,Y)}+12\right)\left\|x-a\right\|^{\alpha}_X,\nonumber
\end{align}
since $\left\|x-a\right\|_X\leq\left\|x-a\right\|^{\alpha}_X$ as $\left\|x-a\right\|_X<1$ and $\alpha\in (0,1]$.
Hence $\bar f$ is $\alpha$-H\"older continuous at~$a$ and (\ref{thm:infinf:item:hoelder}) is proved.

\smallbreak
Finally, we prove (\ref{thm:infinf:item:frechet}). Fix $\eps_2>0$.
Since $L(a)$ is a~Fr\'echet derivative of $f$ at $a$ (with respect to $F$),
there exists $\delta_5>0$ such that
\begin{equation}\label{Der}
\left\|f(z)-f(a)-L(a)(z-a)\right\|_Y\leq\eps_2\left\|z-a\right\|_X\qquad \text{for\ every\ }
z\in F,\ \left\|z-a\right\|_X<\delta_5.
\end{equation}
By \eqref{eq:ALP3} from Theorem \ref{thm:B1ext},
there exists $\delta_{6}>0$ such that
\begin{equation}\label{Bd_beh}
\left\|A(t)-L(a)\right\|_{\mathcal L(X,Y)}\frac{\dist(t,F)}{\left\|t-a\right\|_X}<\eps_2\qquad
\text{for\ every\ }t\in X\setminus F,\ \left\|t-a\right\|_X<\delta_{6}.
\end{equation}

Let $x\in X\setminus F$
be arbitrary satisfying
$\left\|x-a\right\|_X<\min\left(
\frac{\delta_5}{11}
,
\frac{\delta_{6}}{3}
\right)
$.
Then, for every $\markTA \gamma\in \ixsetGx$,
we get $\left\|x_\gamma-a\right\|_X<\delta_{6}$ and $\left\|\widehat{x_\gamma}-a\right\|_X<\delta_5$
by \eqref{E_1_a} and \eqref{E_2_a}.
Thus by \eqref{Pr1}, \eqref{Pr2}, \eqref{Pr3}, \eqref{Sx1},
\eqref{Ext_of_f}, \eqref{E_4.1}, \eqref{E_1_a}, \eqref{E_2_a},
\eqref{Der}
and
\eqref{Bd_beh},
we obtain
\begin{align*}
\left\|\bar f(x)-\bar f(a)-L(a)(x-a)\right\|_Y
   &
=\left\|\sum\limits_{\gamma\in\Gamma}\phi_\gamma(x)
\left[f(\widehat{x_\gamma})+A(x_\gamma)(x-\widehat{x_\gamma})-f(a)
-L(a)(x-a)\right]\right\|_Y\nonumber\\
\qquad\qquad&=\left\|\sum\limits_{\gamma\in\Gamma}
\phi_\gamma(x)\left[f(\widehat{x_\gamma})-f(a)-L(a)(\widehat{x_\gamma}-a)
+(A(x_\gamma)-L(a))(x-\widehat{x_\gamma})\right]\right\|_Y\nonumber\\
&\leq\sum\limits_{\markTA \gamma\in \ixsetGx}\phi_\gamma(x)
\left\|f(\widehat{x_\gamma})-f(a)-L(a)(\widehat{x_\gamma}-a)\right\|_Y\nonumber\\
&\qquad+\sum\limits_{\markTA \gamma\in \ixsetGx}\phi_\gamma(x)\left\|A(x_\gamma)-L(a)\right\|_{\mathcal L(X,Y)}
\left\|x-\widehat{x_\gamma}\right\|_X\nonumber\\
&\leq\sum\limits_{\markTA \gamma\in \ixsetGx}\phi_\gamma(x)\,\eps_2\left \|\widehat{x_\gamma}-a\right\|_X\nonumber\\
&\qquad+\sum\limits_{\markTA \gamma\in \ixsetGx}\phi_\gamma(x)\left\|A(x_\gamma)-L(a)\right\|_{\mathcal L(X,Y)}
\frac{\dist(x_\gamma,F)}{\left\|x_\gamma-a\right\|_X}\frac{\left\|x-\widehat{x_\gamma}\right\|_X}
{\dist(x_\gamma,F)}\left\|x_\gamma-a\right\|_X\nonumber\\
&\leq11\,\eps_2\left\|x-a\right\|_X+12\,\eps_2\left\|x-a\right\|_X=23\,\eps_2\left\|x-a\right\|_X.
\end{align*}
Since $\eps_2>0$ was arbitrary,
we finally get
$$\lim\limits_{\substack{
                        x\to a
                        \\
                        x\in X\setminus F
                        }}\frac{\left\|\bar f(x)-\bar f(a)-L(a)(x-a)\right\|_Y}{\left\|x-a\right\|_X}=0.$$
Since
$L(a)$ is a~Fr\'echet derivative of $\bar f$ at $a$ with respect to $F$,
we deduce
$(\bar{f})^{\prime}(a)=L(a)$, which proves (\ref{thm:infinf:item:frechet}).
\end{proof}

\bledenWSxviOK
By a~straightforward application of Theorem~\ref{thm:infinf}, we obtain the
following
generalization of \cite[Theorem~7]{ALP}
for infinite-dimensional domains and
vector-valued functions.
\eledenWSxvi

\begin{corollary}\label{ALP_do_Y}
Let $X$ be a~normed linear space that admits
Fr\'echet differentiable partition of unity,
$F\subset X$ a~nonempty closed set, $Y$~a~normed linear space, $f\fcolon F\to Y$
an arbitrary function and $L\fcolon F\to\mathcal L(X,Y)$ a~relative Fr\'echet derivative
of $f$ (with respect to $F$).
Then $L$ is Baire one on $F$ if and only if
there exists a~function
$\bar{f}\fcolon X\to Y$ such that $\bar{f}$ extends $f$, $\bar{f}$ is Fr\'echet differentiable
everywhere on $X$ and $(\bar{f})^{\prime}=L$ on~$F$.
\end{corollary}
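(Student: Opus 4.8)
The plan is to prove the two implications of the equivalence separately: the passage from the Baire one property of $L$ to the existence of the extension is a direct application of Theorem~\ref{thm:infinf}, while the converse rests on the (known) fact that the Fr\'echet derivative of an everywhere Fr\'echet differentiable mapping is of the first Baire class.

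\emph{From Baire one to the extension.} Assume $L$ is Baire one on $F$. Since $L$ is a relative Fr\'echet derivative of $f$ (with respect to $F$), the operator $L(a)$ is a relative Fr\'echet derivative of $f$ at $a$ for \emph{every} $a\in F$, so the hypothesis of item~\itemref{thm:infinf:item:frechet} of Theorem~\ref{thm:infinf} is met at every point of $F$. Applying Theorem~\ref{thm:infinf} we obtain $\bar f\fcolon X\to Y$ with: $\bar f=f$ on $F$ by \itemref{thm:infinf:item:ext}; $(\bar f)'(a)=L(a)$ for every $a\in F$ by \itemref{thm:infinf:item:frechet}, so $\bar f$ is Fr\'echet differentiable at each point of $F$ and $(\bar f)'=L$ there; and, taking $\mathcal F$ to be the class of all Fr\'echet differentiable functions on $X$ — which is one of the classes $\mathcal F$ from Lemma~\ref{l:XbezFjakoDefF} (item~(c) with $p_2=1$) and for which $X$ carries a partition of unity by hypothesis — item~\itemref{thm:infinf:item:smoothcomp} gives that $\bar f|_{X\setminus F}$ is, locally, a restriction of a Fr\'echet differentiable function, hence $\bar f$ is Fr\'echet differentiable at every point of $X\setminus F$. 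Combining the two, $\bar f$ is Fr\'echet differentiable everywhere on $X$ and $(\bar f)'=L$ on $F$, as required.

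\emph{The converse.} Suppose a function $\bar f\fcolon X\to Y$ with the listed properties exists. It suffices to show that $(\bar f)'\fcolon X\to\mathcal L(X,Y)$ is Baire one: then $L=(\bar f)'|_{F}$ is Baire one on $F$, since if $g_n\fcolon X\to\mathcal L(X,Y)$ are continuous with $g_n\to(\bar f)'$ pointwise, the restrictions $g_n|_{F}$ are continuous on $F$ and converge pointwise to $L$. That the Fr\'echet derivative of an everywhere Fr\'echet differentiable map is Baire one is standard. If $\dim X<\infty$, fix a basis $e_1,\dots,e_n$ of $X$; each coordinate function $x\mapsto(\bar f)'(x)e_i=\lim_{k\to\infty}k\bigl(\bar f(x+e_i/k)-\bar f(x)\bigr)$ is a pointwise limit of continuous $Y$-valued functions (using that $\bar f$, being differentiable, is continuous), whence $(\bar f)'$ is Baire one; this is the vector-valued analogue of \cite[Proposition~3~(ii)]{ALP}. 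In the general normed setting one invokes the known fact that the Fr\'echet derivative of a Fr\'echet differentiable map between normed linear spaces is of the first Baire class; a self-contained argument would start from the observation that Fr\'echet (as opposed to merely G\^ateaux) differentiability yields $\tfrac1t\bigl(\bar f(x+tv)-\bar f(x)\bigr)\to(\bar f)'(x)v$ as $t\to0^+$ \emph{uniformly} over $\|v\|\le1$, and then has to replace these non-linear, possibly unbounded maps by suitable continuous $\mathcal L(X,Y)$-valued approximants.

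\emph{Main obstacle.} The first implication is essentially bookkeeping once Theorem~\ref{thm:infinf} is available. All the real content of the corollary lies in the converse, and precisely in the claim that an everywhere Fr\'echet differentiable $\bar f$ has Baire one derivative: over a finite-dimensional domain this is the classical difference-quotient computation, but over an infinite-dimensional domain the naive approximants $x\mapsto\tfrac1t\bigl(\bar f(x+t\,\cdot)-\bar f(x)\bigr)$ are neither linear in the increment nor manifestly continuous in $x$ with respect to the operator norm (a continuous function on a bounded subset of $X$ need not be uniformly continuous), so this step has to be handled with care or imported as a known result.
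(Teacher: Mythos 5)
Your first implication is exactly the paper's (unwritten) argument: the paper describes the corollary as ``a straightforward application of Theorem~\ref{thm:infinf}'', and that application is precisely your combination of items \itemref{thm:infinf:item:ext}, \itemref{thm:infinf:item:frechet} and \itemref{thm:infinf:item:smoothcomp}, with $\mathcal F$ the class of Fr\'echet differentiable functions from Lemma~\ref{l:XbezFjakoDefF}(c) (take $p_2=1$). That half is complete and correct, including the observation that \itemref{thm:infinf:item:frechet} already yields genuine Fr\'echet differentiability of $\bar f$ at the points of $F$ with respect to all of~$X$.

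The converse is where you stop short: you correctly reduce it to the Baire-one property of the Fr\'echet derivative and correctly diagnose why difference quotients fail in infinite dimensions, but you then import the key fact as a black box. The standard way to close this does not use approximants at all; it uses an oscillation decomposition that invokes Fr\'echet differentiability only at the two points being compared. For $\varepsilon>0$ and $n\in\N$ set
\begin{equation*}
   X_n=\bigl\{x\in X\setcolon \left\|\bar f(x+h)-\bar f(x)-(\bar f)'(x)h\right\|_Y\le\varepsilon\left\|h\right\|_X\ \text{ for all }\left\|h\right\|_X\le 1/n\bigr\},
\end{equation*}
so that $X=\bigcup_n X_n$. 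If $x,y\in X_n$ with $h:=y-x$, $\left\|h\right\|\le s:=1/(2n)$, and $u$ is a unit vector, then applying the defining inequality at $x$ to the increments $h$ and $h+su$ (both of norm $\le 1/n$) and at $y$ to the increment $su$, and cancelling $\bar f(y+su)=\bar f(x+h+su)$, gives $\left\|\bigl((\bar f)'(y)-(\bar f)'(x)\bigr)(su)\right\|_Y\le 4\varepsilon s$, hence $\left\|(\bar f)'(y)-(\bar f)'(x)\right\|_{\mathcal L(X,Y)}\le 4\varepsilon$; a similar computation (using differentiability of $\bar f$ at a limit point at a sufficiently small scale, plus continuity of $\bar f$) shows that $\closure{X_n}$ is contained in the analogous set with $5\varepsilon$ in place of $\varepsilon$. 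Thus for every $\varepsilon>0$ one obtains a countable closed cover of $X$ on each piece of which $(\bar f)'$ oscillates by $O(\varepsilon)$ at scale $1/(2n)$, and from such covers (disjointify into differences of closed sets and pass to a uniform limit) the first Baire class of $(\bar f)'$, hence of $L=(\bar f)'|_F$, follows. Be aware that this last deduction for maps into a general normed space is itself not free --- it is the same kind of step for which the paper cites \cite[Corollary~4.13]{Kar2015} in the proof of Proposition~\ref{prop:KZ410} --- so if you prefer to cite rather than prove, you should name a reference valid for normed (not just finite-dimensional or separable) range; the paper's one-line justification of the corollary accounts only for the other implication.
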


The following proposition shows that the assumption on partitions of unity cannot be removed from (\ref{thm:infinf:item:smoothcomp}).
The remaining statements of Theorem~\ref{thm:infinf}
require only continuous partitions of unity which are available in all metric spaces.

\begin{proposition}\label{prop:necessary}
Let $X$ be a~normed linear space and $\widetilde{p}\in\N\cup\{\infty\}$. The following statements are equivalent:
\begin{enumerate}[\textup\bgroup (a)\egroup]
\item\label{prop:necessary:item:partition}
        The space $X$ admits $C^{\widetilde{p}}$-smooth partition of unity or partition of unity formed by continuous functions
$\widetilde{p}$-times differentiable in Fr{\'e}chet or G\^ateaux sense.
\MKlistopadxvb
\item\label{prop:necessary:item:vector-full}
        For every normed linear space $Y$, a~nonempty closed set $F\subset X$,
a~function $f\fcolon F\to Y$ and a~Baire one function $L\fcolon F\to\mathcal L(X,Y)$,
there exists a~function $\bar{f}\fcolon X\to Y$ that satisfies
the conclusions of Theorem~\ref{thm:infinf}
including
\blistopadxvH
   the respective conclusion of
   (\ref{thm:difext:item:smoothcomp}) with $p=\widetilde{p}$.
\elistopadxvH
%
\MKlistopadxve
\item\label{prop:necessary:item:scalar-minimal}
        Given any nonempty closed set $F\subset X$
and a
Fr\'echet smooth (or even
locally constant)
        function
        $f\fcolon F\to \R$,
there exists a~function $\bar{f}\fcolon X\to \R$ that satisfies
at least
the following properties from
Theorem~\ref{thm:infinf}:
   (\ref{thm:difext:item:ext}),
   (\ref{thm:difext:item:cont}) and
   the respective conclusion of
   (\ref{thm:difext:item:smoothcomp}) with $p=\widetilde{p}$.
\end{enumerate}
\end{proposition}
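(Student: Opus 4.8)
\emph{Proof plan.}
The plan is to establish the chain of implications from \itemref{prop:necessary:item:partition} to \itemref{prop:necessary:item:vector-full}, from \itemref{prop:necessary:item:vector-full} to \itemref{prop:necessary:item:scalar-minimal}, and from \itemref{prop:necessary:item:scalar-minimal} back to \itemref{prop:necessary:item:partition}. Throughout, let $\mathcal F$ denote the class of functions occurring in \itemref{prop:necessary:item:partition}, i.e.\ the $C^{\widetilde p}$-smooth functions, or the continuous functions that are $\widetilde p$-times Fr\'echet (resp.\ G\^ateaux) differentiable. For the argument I only need the following routine facts about each such $\mathcal F$, which I would verify at the outset: $\mathcal F$ is \emph{local} (a function on an open set lies in $\mathcal F$ as soon as it does so near every point), $\mathcal F$ is stable under finite sums, under the map $g\mapsto 1/g$ whenever $g\in\mathcal F$ with $\inf_X g>0$, and under post-composition with $C^\infty$ maps $\R\to\R$; finally $\mathcal F$ is one of the classes allowed in Lemma~\ref{l:XbezFjakoDefF}. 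The implication \itemref{prop:necessary:item:partition}${}\Rightarrow{}$\itemref{prop:necessary:item:vector-full} is then immediate from Theorem~\ref{thm:infinf}: under \itemref{prop:necessary:item:partition} the space $X$ admits $\mathcal F$-partition of unity, so the extension $\bar f$ furnished by that theorem meets \itemref{thm:infinf:item:ext}--\itemref{thm:infinf:item:contcomp} together with \itemref{thm:infinf:item:smoothcomp}, the latter being precisely the respective form of \itemref{thm:difext:item:smoothcomp} with $p=\widetilde p$. The implication \itemref{prop:necessary:item:vector-full}${}\Rightarrow{}$\itemref{prop:necessary:item:scalar-minimal} is the specialization $Y=\R$ together with the choice of the constant (hence Baire one) function $L\equiv 0$: for a nonempty closed $F$ and a Fr\'echet smooth, or locally constant, function $f\fcolon F\to\R$, statement \itemref{prop:necessary:item:vector-full} yields $\bar f\fcolon X\to\R$ fulfilling all conclusions of Theorem~\ref{thm:infinf} — in particular \itemref{thm:difext:item:ext}, \itemref{thm:difext:item:cont} and the respective conclusion of \itemref{thm:difext:item:smoothcomp} with $p=\widetilde p$.

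The substance of the proposition is \itemref{prop:necessary:item:scalar-minimal}${}\Rightarrow{}$\itemref{prop:necessary:item:partition}, which I would carry out in two steps. \emph{Step 1 (manufacturing globally $\mathcal F$-smooth Urysohn functions).} Let $A,C\subset X$ be disjoint closed sets, both nonempty (if one is empty the statement below is trivial). Put $F=A\cup C$ and let $f\fcolon F\to\R$ equal $1$ on $A$ and $0$ on $C$. As $A$ and $C$ are disjoint and closed, $f$ is locally constant on $F$, so \itemref{prop:necessary:item:scalar-minimal} provides $\bar f\fcolon X\to\R$ with $\bar f=f$ on $F$, with $\bar f$ continuous at every point of $F$, and with $\bar f|_{X\setminus F}$ of class $\mathcal F$. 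Fix a nondecreasing $\theta\in C^\infty(\R,[0,1])$ with $\theta\equiv 0$ on $(-\infty,\tfrac14]$ and $\theta\equiv 1$ on $[\tfrac34,\infty)$, and set $\varphi:=\theta\circ\bar f$. Continuity of $\bar f$ at the points of $A$, where $\bar f=1$, yields an open $N_A\supset A$ with $\bar f>\tfrac34$ on $N_A$, so $\varphi\equiv 1$ on $N_A$; symmetrically there is an open $N_C\supset C$ with $\varphi\equiv 0$ on $N_C$; and on $X\setminus F$ the function $\varphi$ is a $C^\infty$-composition of an $\mathcal F$-function, hence of class $\mathcal F$. Since $N_A\cup N_C\cup(X\setminus F)=X$ and $\mathcal F$ is local, $\varphi\in\mathcal F$; moreover $0\le\varphi\le 1$, $\varphi\equiv 1$ on $A$, and $\varphi$ vanishes on the open neighbourhood $N_C$ of $C$, so $\spt\varphi\subset X\setminus C$.

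\emph{Step 2 (from Urysohn functions to partitions of unity).} Let $\mathcal V$ be an arbitrary open cover of $X$. Since $X$ is metrizable, hence paracompact and normal, I would refine $\mathcal V$ to a locally finite open cover $\{V_i\}_{i\in I}$ and shrink the latter to an open cover $\{W_i\}_{i\in I}$ with $\overline{W_i}\subset V_i$. For each $i$, applying Step 1 to the disjoint closed sets $\overline{W_i}$ and $X\setminus V_i$ (discarding indices with $W_i=\emptyset$, and taking $\varphi_i\equiv 1$ when $V_i=X$) produces $\varphi_i\in\mathcal F$ with $0\le\varphi_i\le 1$, $\varphi_i\equiv 1$ on $\overline{W_i}$ and $\spt\varphi_i\subset V_i$. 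By local finiteness of $\{V_i\}$ the function $s:=\sum_{i\in I}\varphi_i$ agrees near every point with a finite subsum of $\mathcal F$-functions, hence $s\in\mathcal F$, and $s\ge 1$ on $X$ because $\{\overline{W_i}\}$ covers $X$ and $\varphi_i\equiv 1$ on $\overline{W_i}$. Then $\psi_i:=\varphi_i/s\in\mathcal F$, $\sum_{i\in I}\psi_i\equiv 1$, the family $\{\psi_i\}$ is locally finite, and $\spt\psi_i=\spt\varphi_i\subset V_i$ lies in some member of $\mathcal V$; thus $\{\psi_i\}_{i\in I}$ is an $\mathcal F$-partition of unity subordinated to $\mathcal V$. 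As $\mathcal V$ was arbitrary, $X$ admits $\mathcal F$-partition of unity, which is \itemref{prop:necessary:item:partition}.

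The only genuinely delicate point — and the one I expect to be the main obstacle — is the step inside Step 1 that turns the extension supplied by \itemref{prop:necessary:item:scalar-minimal}, which is guaranteed to be $\mathcal F$-smooth only \emph{off} the closed set $F$, into a function that is $\mathcal F$-smooth on all of $X$. The resolution above exploits that $\bar f$ is continuous on $X$ and takes the constant values $1$ on $A$ and $0$ on $C$, so that composing with the flattening map $\theta$ makes $\varphi$ locally constant in a neighbourhood of $F$ and thereby eliminates the ``bad'' set. Everything else is either a direct appeal to Theorem~\ref{thm:infinf} or the classical construction of partitions of unity from Urysohn-type functions, together with the elementary closure properties of $\mathcal F$ recorded at the start.
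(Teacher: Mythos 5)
Your proof is correct, and its skeleton coincides with the paper's: (a)$\Rightarrow$(b) is Theorem~\ref{thm:infinf}, (b)$\Rightarrow$(c) is a specialization, and the heart of (c)$\Rightarrow$(a) is exactly the paper's construction --- set $F=A\cup C$ with $f$ locally constant ($1$ on $A$, $0$ on $C$), extend by (c), and post-compose with a flattening $C^\infty$ map so that the result is locally constant near $F$ and of class $\mathcal F$ off $F$, hence of class $\mathcal F$ globally. Where you genuinely diverge is the last step of (c)$\Rightarrow$(a): the paper stops once it has produced, for closed $A\subset W$ open, an $\mathcal F$-function $h$ with $A\subset h^{-1}(0,\infty)\subset W$, and then invokes \cite[Lemma VIII.3.6, (ii)$\Rightarrow$(i)]{DGZ} (noting that its proof does not need completeness of $X$) to pass to partitions of unity; you instead prove that passage directly, via Stone paracompactness of metric spaces, a shrinking $\{W_i\}$ of a locally finite refinement, and normalization $\psi_i=\varphi_i/s$. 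Your route is more self-contained and sidesteps the Banach-vs-normed caveat the paper has to make about the DGZ reference, at the cost of reproving a standard lemma; the paper's is shorter. Two trivial points to tidy up: your list of closure properties of $\mathcal F$ omits products, which you need for $\varphi_i\cdot(1/s)\in\mathcal F$ (routine for all the classes in question, and implicitly used already in the paper's Lemma~\ref{l:XbezFjakoDefF}); and you should note, as you implicitly do, that continuity of $\bar f$ on $X\setminus F$ is supplied by the class-$\mathcal F$ conclusion itself, since (c) does not list conclusion \itemref{thm:infinf:item:contcomp}.
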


\begin{proof}
   The implication
   (\ref{prop:necessary:item:vector-full})
        $\Rightarrow$
        (\ref{prop:necessary:item:scalar-minimal})
                is obvious and
   (\ref{prop:necessary:item:partition})
        $\Rightarrow$
        (\ref{prop:necessary:item:vector-full})
                follows by
                    Theorem~\ref{thm:infinf}.
   The third implication
   (\ref{prop:necessary:item:scalar-minimal})
        $\Rightarrow$
        (\ref{prop:necessary:item:partition})
   follows by
   \cite[Lemma VIII.3.6, (ii) $\Rightarrow$ (i)]{DGZ}
   (or, more precisely,
   the proof of it, since
   the argument does not use
   the completeness
   of $X$)
   as soon as we show that
   given sets $A\subset W \subset X$, where $A$ is closed and $W$ open,
   there exists a~$C^{\widetilde{p}}$-smooth function
   $h \fcolon X \to [0,1]$ such that
   $A \subset  h^{-1} (0,\infty) \subset W$.
   To do so, assume $A$
   and
   $W$ are as indicated. Set $B=X\setminus W$ and $F=A\cup B$.
   Let $L(x)=0\in X^*$ for $x\in F$
   and
   $f(x)=1$ for $x \in A$, $f(x)=0$ for $x\in B$.
   By~(\ref{prop:necessary:item:scalar-minimal}),
   there exists
   a~function $\bar f$
   that satisfies
   conclusions
      (\ref{thm:infinf:item:ext}),
   (\ref{thm:infinf:item:cont})
   and (\ref{thm:infinf:item:smoothcomp})
   of Theorem~\ref{thm:infinf}
   (with $p=\widetilde{p}$).
    This extension $\bar f$
   is not necessarily
   $\widetilde{p}$-times continuously differentiable
   on the boundary of $F$. However, $h(x) := \varphi(\bar f(x))$ satisfies
   all required properties if $\varphi$ is a~suitable smooth function
   (e.g., $\varphi \fcolon \R \to [0,1]$ with $\varphi = 0$ on $(-\infty, 1/4]$
   and $\varphi=1$ on $[3/4, \infty)$);
   $h^\prime$ vanishes in a~neighborhood of the boundary of $F$
   by (\ref{thm:infinf:item:ext})
   and
   (\ref{thm:infinf:item:cont})).
\end{proof}


\section{Vector-valued functions in finite dimensional domain}\label{sec:fininf}
\noindent
\bledenxvipoWSOK
In this section, the domain space is
\eledenxvipoWS
the Euclidean space $\rn$ ($n\in\N$).
The norm on $\rn$ is denoted by $\left|\cdot\right|$. We identify $\rn$
with its dual space $(\rn)^*$ of all linear functionals on $\rn$.

It will be convenient to use the following {\em tensor product} notation.
If $\psi\in X^{*}$ and $y\in Y$, then $(y\otimes\psi)(u):=\psi(u)\,y$
for every $u\in X$. Note that $y\otimes\psi\in\mathcal L(X,Y)$.
In particular, if $\phi\fcolon \rn\to\R$ is differentiable at $x\in\rn$
and $y\in Y$, then $y\otimes{\phi^\prime(x)}\in\mathcal L(\rn,Y)$
and $(y\otimes{\phi^\prime(x)})(u)=(\phi^\prime(x))(u)\,y\in Y$ for every $u\in\rn$.
\bledenxvipoWSOK
Hence $y\otimes \phi^\prime(x)$
is the derivative
of
vector-valued function $t\mapsto \phi(t)\, y$
at $x$.
\eledenxvipoWS

The following theorem generalizes the main extension result from \cite{KZ}
to the case of vector-valued functions
\bledenxvipoWSOK
(see~Corollary~\ref{cor:KZ_do_Y},
\eledenxvipoWS
compare with \cite[Theorem~3.1]{KZ}).

\begin{theorem}\label{thm:fininf}
Let $F\subset\rn$ be a~closed set, $Y$~a~normed linear space,
$f\fcolon F\to Y$ an~arbitrary function and $L\fcolon F\to\mathcal L(\rn,Y)$
a~function that is Baire one on $F$. Then there exists a~function $\bar{f}\fcolon \rn\to Y$
such that
\begin{enumerate}[\textup\bgroup (i)\egroup]
   \item\label{thm:fininf:item:ext}
               $\bar{f}=f$ on $F$,

   \item\label{thm:fininf:item:cont}
               if $a\in F$ and $f$ is continuous at $a$ \textup(with respect to $F$\textup),
               then $\bar{f}$ is continuous at $a$,

   \item\label{thm:fininf:item:hoelder}
                if $a\in F$, $\alpha\in (0,1]$ and $f$ is $\alpha$-H\"{o}lder continuous at $a$ \textup(with respect to $F$\textup),
                then $\bar{f}$ is $\alpha$-H\"{o}lder continuous at $a$;
                in particular, if $f$ is Lipschitz at $a$
                \textup(with respect to $F$\textup), then $\bar{f}$ is Lipschitz at $a$,

   \item\label{thm:fininf:item:frechet}
               if $a\in F$ and $L(a)$ is a~relative Fr{\'e}chet derivative of $f$ at $a$
               \textup(with respect to $F$\textup), then $(\bar{f})^\prime(a)=L(a)$,

   \item\label{thm:fininf:item:infsmoothcomp}
               $
               \bar{f}
               | _ {\rn \setminus \theset}
               \in{C^\infty}(\rn\setminus F,Y)$,

   \item\label{thm:fininf:item:strict}
               if $a\in F$, $L$ is continuous at $a$ and $L(a)$ is a~relative strict derivative
               of $f$ at $a$ \textup(with respect to $F$\textup), then the Fr{\'e}chet derivative
               $(\bar{f})^\prime$ is continuous at $a$ with respect to $(\rn\setminus F)\cup\{a\}$
               and $L(a)$ is the strict derivative of $\bar{f}$ at $a$ \textup(with respect to $\rn$\textup),

   \item\label{thm:fininf:item:lip-Loc-Glob}
\bledenxvipoWSOK
               if $a\in F$,
               $R>0$,
               $L$ is bounded on
               $B(a,R) \cap F$
               and $f$ is Lipschitz continuous on
               $B(a,R) \cap F$,
               then $\bar{f}$ is Lipschitz continuous on
               $B(a,r)$ for every $r<R$;
               if
               $L$ is bounded on $F$
               and $f$ is Lipschitz continuous on
               $ F$,
               then $\bar{f}$ is Lipschitz continuous on
               $\rn$.
\eledenxvipoWS
\end{enumerate}
\end{theorem}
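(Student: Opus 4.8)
The plan is to re-run the construction of $\bar f$ from the proof of Theorem~\ref{thm:infinf} with $X=\rn$ and $Z=\mathcal L(\rn,Y)$. Since $\rn$ admits $C^\infty$-smooth partitions of unity, conclusions \itemref{thm:fininf:item:ext}--\itemref{thm:fininf:item:infsmoothcomp} follow directly from Theorem~\ref{thm:infinf} (with $\mathcal F$ the class of $C^\infty$-smooth functions on $\rn$), so no new argument is needed for them. For the two genuinely new statements \itemref{thm:fininf:item:strict} and \itemref{thm:fininf:item:lip-Loc-Glob} I would, however, choose the partition of unity $\{\phi_\gamma\}_{\gamma\in\Gamma}$ subordinated to $\{B(x,10r(x))\setcolon x\in\rn\setminus F\}$ more carefully: a sufficiently fine Whitney decomposition of the open set $\rn\setminus F$ produces such a partition with the additional features that at most $N=N(n)$ of the $\phi_\gamma$ are nonzero at any point and $\|\phi_\gamma'(z)\|\le C_n/r(x_\gamma)$; combined with \eqref{Pr4} this gives $\|\phi_\gamma'(z)\|\le C_n'/\dist(z,F)$ whenever $\phi_\gamma(z)\ne 0$ or $\phi_\gamma'(z)\ne 0$. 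Recall (cf.\ \eqref{Ext_of_f}) that on $\rn\setminus F$ we have $\bar f(z)=\sum_\gamma\phi_\gamma(z)\,g_\gamma(z)$ with $g_\gamma(z)=f(\widehat{x_\gamma})+A(x_\gamma)(z-\widehat{x_\gamma})$ affine, so $g_\gamma'\equiv A(x_\gamma)$; since $\sum_\gamma\phi_\gamma\equiv1$ and hence $\sum_\gamma\phi_\gamma'\equiv0$ on $\rn\setminus F$, for any $c\in Y$ and any $w\in\mathcal L(\rn,Y)$ one gets the identity
\[
  (\bar f)'(z)=w+\sum_\gamma\bigl(g_\gamma(z)-c\bigr)\otimes\phi_\gamma'(z)+\sum_\gamma\phi_\gamma(z)\bigl(A(x_\gamma)-w\bigr),\qquad z\in\rn\setminus F.
\]

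For \itemref{thm:fininf:item:strict} I may assume $a\in\boundary F$. Fix $\eps>0$ and set $m_z(u):=f(\widehat z)+L(a)(u-\widehat z)$. Since $L$ is continuous at $a$, \eqref{eq:continuity} of Theorem~\ref{thm:B1ext} gives $\|A(x_\gamma)-L(a)\|<\eps$ once $z$ is close enough to $a$ (as then $x_\gamma\to a$ by \eqref{E_1_a}); and writing $g_\gamma(z)-m_z(z)=\bigl(f(\widehat{x_\gamma})-f(\widehat z)-L(a)(\widehat{x_\gamma}-\widehat z)\bigr)+\bigl(A(x_\gamma)-L(a)\bigr)(z-\widehat{x_\gamma})$, the relative strict differentiability of $f$ at $a$, applied to the pair $\widehat{x_\gamma},\widehat z\in F$ (which lie within $12\dist(z,F)$ of each other by \eqref{E_3} and are close to $a$), together with \eqref{E_4}, gives $\|g_\gamma(z)-m_z(z)\|\le C\eps\,\dist(z,F)$ for $z$ near $a$. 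Taking $c=m_z(z)$, $w=L(a)$ in the identity, multiplying the middle sum termwise by $\|\phi_\gamma'(z)\|\le C_n'/\dist(z,F)$ and summing over the $\le N$ relevant indices, one obtains $\|(\bar f)'(z)-L(a)\|\le C''\eps$ for all $z\in\rn\setminus F$ near $a$; since $(\bar f)'(a)=L(a)$ by \itemref{thm:fininf:item:frechet} (a strict derivative being a Fr\'echet one), $(\bar f)'$ is continuous at $a$ with respect to $(\rn\setminus F)\cup\{a\}$. To deduce that $L(a)$ is the strict derivative of $\bar f$ at $a$ (with respect to $\rn$), take $x,y$ near $a$ with $x\ne y$. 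If $\min(\dist(x,F),\dist(y,F))>\|x-y\|$, the segment $[x,y]$ avoids $F$ and the mean value inequality combined with the bound just obtained gives $\|\bar f(y)-\bar f(x)-L(a)(y-x)\|\le C''\eps\|x-y\|$. Otherwise both $x,y$ lie within $O(\|x-y\|)$ of $F$, and one uses the decomposition
\[
  \bar f(y)-\bar f(x)-L(a)(y-x)=\bigl(\bar f(y)-m_y(y)\bigr)+\bigl(f(\widehat y)-f(\widehat x)-L(a)(\widehat y-\widehat x)\bigr)-\bigl(\bar f(x)-m_x(x)\bigr)
\]
(which one checks directly, recalling $\bar f=f$ on $F$): the middle term is $\le\eps\|\widehat y-\widehat x\|$ by relative strictness of $f$, and the outer terms are $\le C\eps\,\dist(y,F)$ and $\le C\eps\,\dist(x,F)$ by the estimate $\|\bar f(\cdot)-m_{(\cdot)}(\cdot)\|\le C\eps\,\dist(\cdot,F)$ just derived; as $\|\widehat x-\widehat y\|$, $\dist(x,F)$, $\dist(y,F)$ are all $O(\|x-y\|)$, this is $O(\eps\|x-y\|)$. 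Letting $\eps\downarrow0$ proves \itemref{thm:fininf:item:strict}.

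Conclusion \itemref{thm:fininf:item:lip-Loc-Glob} is handled by the same mechanism, with the boundedness property \eqref{eq:boundedness} of Theorem~\ref{thm:B1ext} replacing \eqref{eq:continuity}. In the global case, let $M$ be a Lipschitz constant for $f$ on $F$; \eqref{eq:boundedness} with $r=\infty$ shows $A$ is bounded on $\rn\setminus F$, say $\|A\|\le\Lambda$. Taking $c=f(\widehat z)$, $w=0$ in the identity and using $\|g_\gamma(z)-f(\widehat z)\|\le M\|\widehat{x_\gamma}-\widehat z\|+\Lambda\|z-\widehat{x_\gamma}\|\le(12M+10\Lambda)\dist(z,F)$ (by \eqref{E_3}, \eqref{E_4}) together with $\|\phi_\gamma'(z)\|\le C_n'/\dist(z,F)$ gives $\sup_{\rn\setminus F}\|(\bar f)'\|<\infty$; then $\bar f$ is Lipschitz on $\rn$ via the same dichotomy (segments avoiding $F$: mean value inequality; $x,y$ both near $F$: the three-term decomposition, now with $\|\bar f(z)-f(\widehat z)\|\le(12M+10\Lambda)\dist(z,F)$ and the Lipschitz continuity of $f$ on $F$). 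For the local case fix $r<R$. Since $\overline{B(a,r)}$ is compact and convex, it suffices to show $\bar f$ is Lipschitz in a neighbourhood of each of its points; off $F$ this is clear from \itemref{thm:fininf:item:infsmoothcomp}, and for $b\in\overline{B(a,r)}\cap F$ one picks $\rho>0$ with $B(b,36\rho)\subset B(a,R)$, applies \eqref{eq:boundedness} at the centre $b$ with radius $3\rho$ to get $A$ bounded on $B(b,3\rho)\setminus F$, checks that for $z\in B(b,\rho)$ all the points $x_\gamma,\widehat{x_\gamma},\widehat z$ occurring in $\bar f(z)$ lie in $B(a,R)$ (where $f$ is Lipschitz), and runs the global-case estimates locally; a finite subcover plus the standard fact that a locally Lipschitz function on a neighbourhood of a compact convex set is Lipschitz on it then finishes the argument.

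The main obstacle throughout is the tension between the two scales $\|x-a\|$ (resp.\ $\|x-y\|$) and $\dist(x,F)$: because $\|\phi_\gamma'\|$ blows up like $1/\dist(\cdot,F)$, the first-order models must be recentred at nearest points of $F$ rather than at $a$, and the case in which the segment $[x,y]$ crosses $F$ has to be routed through $F$ — which is exactly where the strict (resp.\ Lipschitz) hypothesis on $f$ enters. Finite dimensionality of $X$ is used in two places: to produce a partition of unity with the derivative bounds $\|\phi_\gamma'\|\le C/\dist(\cdot,F)$ and bounded overlap, and to pass from a local to a global Lipschitz estimate via compactness of closed balls.
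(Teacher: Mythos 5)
Your proposal is correct, and most of its architecture coincides with the paper's: a Whitney-type partition of $\rn\setminus F$ with bounded overlap and the gradient bound $|\phi_j'|\le C/\dist(\cdot,F)$ is exactly Lemma~\ref{l:specPart}; your derivative identity (obtained from $\sum_j\phi_j'\equiv 0$ by recentring at $c=m_z(z)$, $w=L(a)$) is the paper's \eqref{eq:Dfdole}; and the resulting bound $\|(\bar f)'(z)-L(a)\|\le C\eps$ near $a$ is \eqref{eq:DfCont}. Where you genuinely diverge is in passing from this to the two-point estimate $\|\bar f(y)-\bar f(x)-L(a)(y-x)\|\le C\eps\,|y-x|$ when the segment $[x,y]$ meets $F$. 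The paper (Claim~\ref{claim:strictAndLip}) splits $[x,y]$ at its first and last intersection points $u,v$ with $F$, replaces them by nearby points $\bar u,\bar v\in\rn\setminus F$, and estimates $E_{\bar u\bar v}$ by a double convex combination over the pairs $\widehat{x_j},\widehat{x_k}$ entering $\bar f(\bar u)$ and $\bar f(\bar v)$ --- precisely to avoid invoking continuity of $\bar f$ at $u,v$, which may fail for an arbitrary $f$. You instead observe that in this case both endpoints satisfy $\dist(\cdot,F)\le 2|y-x|$ and use the exact three-term identity routed through $\widehat x,\widehat y\in F$, together with the single-point estimate $\|\bar f(z)-f(\widehat z\,)-L(a)(z-\widehat z\,)\|\le C\eps\dist(z,F)$, which is just a convex combination of the corresponding estimates for the affine pieces $g_\gamma$. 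This dispenses with the segment entirely in the crossing case, is somewhat shorter, and --- like the paper's detour through $\bar u,\bar v$ --- never uses continuity of $\bar f$ on $F$; the same dichotomy then yields \itemref{thm:fininf:item:lip-Loc-Glob} with \eqref{eq:boundedness} in place of \eqref{eq:continuity}, as in the paper. The only points to polish are cosmetic: set $\widehat z:=z$ for $z\in F$ so that the three-term identity covers the cases where an endpoint lies in $F$, and in the local part of \itemref{thm:fininf:item:lip-Loc-Glob} verify the radius bookkeeping forced by the factor $12$ in \eqref{eq:boundedness} (the paper uses the hypotheses on $B(a,72r)$ and $B(a,12r)$ for the conclusion on $B(a,r)$), which your choice of $B(b,36\rho)\subset B(a,R)$ indeed accommodates.
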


The strategy of the proof is analogous to the one used in the proof of Theorem \ref{thm:infinf}.
Assertions (\ref{thm:fininf:item:ext})-(\ref{thm:fininf:item:infsmoothcomp}) follow directly from Theorem~\ref{thm:infinf} as $\rn$ admits
$C^\infty$-smooth partition of unity. To ensure (\ref{thm:fininf:item:strict})-(\ref{thm:fininf:item:lip-Loc-Glob}),
we need a~special $C^\infty$-smooth
partition of unity in $\rn\setminus F$ that meets several additional requirements
analogous to those used in
proofs of \cite[Theorem~3.1]{KZ}
and the $C^1$ case of Whitney's extension theorem in \cite{EG},
namely \eqref{P1} and \eqref{P7} below.
Since we
decided to include
the preservation of the global Lipschitz continuity
(see~\itemref{thm:fininf:item:lip-Loc-Glob}),
we
had
to introduce a~slight change compared to \cite{KZ} and
\cite{EG}.

\begin{lemma}\label{l:specPart}
\bledenWSxviOK
There are 
$C_1, C_2 >1$
depending only on the dimension $n\in \N$
with the following property:
Let $F\subset\rn$ be a~nonempty closed set. There
\markTD
exist
$\{x_j\}_{j\in \N}\subset\rn\setminus F$
and
$\{\phi_j\}_{j\in\N}\subset C^{\infty}(\rn\setminus F,\R)$
\eledenWSxvi
such that,
letting
\begin{equation}
        \label{Sx}
        \ixsetSx :=\{\markTC j \in \N \setcolon B(x,10r(x))\cap B(x_j,10r(x_j))\neq\emptyset\}
\end{equation}
        and
\begin{equation}
        \label{eq:rbezmin}
        r(x):=\frac{1}{20}\dist(x,F),
\end{equation}
          we have,
for every $j\in\N$ and $x\in\rn\setminus F$,
\begin{equation}\label{P1}
        \card(\ixsetSx )
       \leq
        \blistopadxvH
        C_1,
        \elistopadxvH
\end{equation}
\begin{equation}\label{P2}
\frac{1}{3}\leq\frac{r(x)}{r(x_j)}\leq 3\qquad {\text{if}}\ \markTD j\in \ixsetSx ,
\end{equation}
\begin{equation}\label{P3}
0\leq\phi_j,
\end{equation}
\begin{equation}\label{P4}
\spt\phi_j\subset B(x_j,10r(x_j)),
\end{equation}
\begin{equation}\label{P5}
\sum\limits_{j\in \N}\phi_j(x)=1,
\end{equation}
\begin{equation}\label{P6}
\sum\limits_{j\in \N}{\phi_j}^{\prime}(x)=0
\end{equation}
and
\begin{equation}\label{P7}
                |{\phi_j}^{\prime}(x)|
        \leq
                \frac{
        \blistopadxvH
        C_2
        \elistopadxvH
                }{r(x)}.
\end{equation}
\end{lemma}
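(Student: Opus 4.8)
The plan is to run the classical Whitney-covering construction, adapted to the normalization $r=\tfrac1{20}\dist(\cdot,F)$ fixed in \eqref{eq:rbezmin}, and then to turn the covering into a $C^\infty$ partition of unity \emph{by hand} (rather than invoking Lemma~\ref{l:XbezF}) so that the gradients can be controlled as in \eqref{P7}. We may assume $\rn\setminus F\ne\emptyset$, the other case being vacuous. The only analytic input is that $\dist(\cdot,F)$ is $1$-Lipschitz, so that $r$ is $\tfrac1{20}$-Lipschitz. This alone forces \eqref{P2} for \emph{any} choice of $\{x_j\}$: if $j\in\ixsetSx$, pick $z\in B(x,10r(x))\cap B(x_j,10r(x_j))$ (see \eqref{Sx}); then $|x-x_j|<10r(x)+10r(x_j)$, hence $|r(x)-r(x_j)|\le\tfrac1{20}|x-x_j|<\tfrac12(r(x)+r(x_j))$, and rearranging gives $\tfrac13\le r(x)/r(x_j)\le3$.

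Next I would choose the net. Using separability of $\rn$ (so that a pairwise disjoint family of nonempty open balls is countable) together with a maximality argument, pick $\{x_j\}_{j\in\N}\subset\rn\setminus F$ maximal such that the balls $B(x_j,\tfrac12 r(x_j))$ are pairwise disjoint; this family is countably infinite since $r\to0$ near the (nonempty) set $\boundary F$. Maximality yields a covering: for $y\in\rn\setminus F$ the ball $B(y,\tfrac12 r(y))$ meets some $B(x_j,\tfrac12 r(x_j))$, and comparing $r(y)$ with $r(x_j)$ by the Lipschitz estimate places $y$ in $B(x_j,2r(x_j))$; so $\{B(x_j,2r(x_j))\}_{j\in\N}$ covers $\rn\setminus F$. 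For \eqref{P1}: if $j\in\ixsetSx$, then by \eqref{P2} and the triangle inequality $x_j\in B(x,40r(x))$, so the pairwise disjoint balls $B(x_j,\tfrac12 r(x_j))$ with $j\in\ixsetSx$ all lie in $B(x,42r(x))$ while each has radius $\ge\tfrac16 r(x)$; comparing Lebesgue measures gives $\card(\ixsetSx)\le 252^{\,n}=:C_1$, depending only on $n$. The same separation-and-volume argument makes the families appearing below locally finite in $\rn\setminus F$.

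To produce the $\phi_j$, fix once and for all a $C^\infty$ function $\theta\fcolon\rn\to[0,1]$ with $\theta\equiv1$ on $\closure{B(0,2)}$, $\spt\theta\subset B(0,9)$, and $M:=\sup|\theta'|<\infty$; set $\theta_j(x):=\theta((x-x_j)/r(x_j))$ and $w:=\sum_{j\in\N}\theta_j$. Then $\theta_j\ge0$, $\theta_j\equiv1$ on $B(x_j,2r(x_j))$, $\spt\theta_j\subset B(x_j,9r(x_j))\subset B(x_j,10r(x_j))$, and $|\theta_j'|\le M/r(x_j)$; by the covering step $w\ge1$ on $\rn\setminus F$, and the sum is locally finite (at each point at most $C_1$ terms are nonzero, since $x\in\spt\theta_j$ implies $j\in\ixsetSx$), so $w\in C^\infty(\rn\setminus F,\R)$. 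Put $\phi_j:=\theta_j/w$. Then \eqref{P3}, \eqref{P4} and \eqref{P5} are immediate, and \eqref{P6} follows by differentiating \eqref{P5} termwise, which is legitimate by local finiteness. For \eqref{P7} use $\phi_j'=\theta_j'/w-\theta_j w'/w^2$: if $x\in\spt\theta_j$ then $j\in\ixsetSx$, so by \eqref{P2} $1/r(x_j)\le3/r(x)$ and $|\theta_j'(x)|\le3M/r(x)$; since $w\ge1$, $0\le\theta_j\le1$, and $w'=\sum_k\theta_k'$ has at most $C_1$ nonzero terms at $x$ each bounded by $3M/r(x)$, we obtain $|\phi_j'(x)|\le 3M(1+C_1)/r(x)$, while $\phi_j'\equiv0$ off $\spt\theta_j$. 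Thus \eqref{P7} holds with $C_2:=3M(1+C_1)$ (increased to exceed $1$ if necessary), again depending only on $n$.

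The argument is conceptually routine — this is the Whitney decomposition — so the real work, and the main place to be careful, is the constant bookkeeping: the dilation factors must be chosen compatibly with the normalization $r=\tfrac1{20}\dist(\cdot,F)$ so that \eqref{P2} holds, the dilated net still covers $\rn\setminus F$, the volume-packing bound in \eqref{P1} closes with a constant depending only on $n$, and the product-rule estimate in \eqref{P7} goes through with a clean constant. As the text before the lemma indicates, it is exactly the demand for global Lipschitz control in Theorem~\ref{thm:fininf}\,\itemref{thm:fininf:item:lip-Loc-Glob} that forces running everything through the Euclidean balls $B(\cdot,10r(\cdot))$ instead of dyadic cubes as in \cite{KZ}, \cite{EG}, so the constants must be made explicit rather than absorbed.
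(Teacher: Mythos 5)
Your proof is correct, but it takes a genuinely different route from the one in the paper. You build the partition from scratch: a maximal family $\{x_j\}$ with $B(x_j,\tfrac12 r(x_j))$ pairwise disjoint (Vitali-type net), the covering by $B(x_j,2r(x_j))$ from maximality plus the $\tfrac1{20}$-Lipschitz property of $r$, the overlap bound \eqref{P1} by comparing volumes of the disjoint balls inside $B(x,42r(x))$, and then normalized bumps $\phi_j=\theta_j/\sum_k\theta_k$ with the product-rule estimate giving \eqref{P7}; I checked the constants ($C_1=252^n$, $C_2=3M(1+C_1)$, and the covering inequality $|y-x_j|<\tfrac{40}{39}r(x_j)<2r(x_j)$) and they all close. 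The paper instead does \emph{not} reconstruct the Whitney decomposition: it quotes the cube-based partition of \cite[pp.~245--247]{EG}/\cite{KZ}, which only delivers the truncated radius function $r_s(x)=\tfrac1{20}\min(s,\dist(x,F))$, rescales it to each $s=6^m$, and then glues the countably many rescaled partitions together with a secondary partition of unity $\{v_{6^m}\}$ supported in annuli around $F$, at the price of degrading the constants to $4C_1$ and $3C_1C_2$ and of the bookkeeping in Appendix~\ref{apen:partition}. Your direct construction buys self-containedness and avoids the gluing entirely (the untruncated $r$ causes no difficulty for balls, only for the fixed-side dyadic cubes of \cite{EG}); the paper's version buys reuse of an already-verified construction from the literature. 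The only points worth spelling out slightly more in your write-up are the existence and countable infiniteness of the maximal net (Zorn plus separability, and the fact that $\partial F\neq\emptyset$ forces infinitely many balls) and the uniform local finiteness of $\{\theta_j\}$ on a full neighborhood $B(x,r(x))$ of each $x$, which is needed to differentiate $w$ and the sum in \eqref{P5} termwise; both follow from the same packing argument you already use for \eqref{P1}.
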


\blistopadxvH

The proof of Lemma~\ref{l:specPart} is standard.
It can be derived from a~very similar statement that is proven in \cite[pp.~245--247]{EG}
and summarized in \cite[Step~1 on p.~1031]{KZ}.
Statements in the same spirit can also be found in \cite{SingularIntegrals}
and \cite[Theorem~2.2]{MdeGuzman}.
For the sake of completeness,
we prove the lemma in
\bunordruhapodruheschXVIOK
Appendix~\ref{apen:partition}.  
\eunordruhapodruheschXVI

\begin{proof}[Proof of Theorem~\ref{thm:fininf}.]
If $F$ is empty, the theorem trivially holds. Further suppose that $F$ is nonempty.
Let $C_1, C_2>1$,
$\{x_j\}_{j\in \N}\subset\rn\setminus F$,
$\{\phi_j\}_{j\in \N}\subset C^{\infty}(\rn\setminus F,\,\R)$, $\ixsetSx $ and $r(x)$
be as in Lemma~\ref{l:specPart}. For every $x\in\rn\setminus F$, we choose any point
$\widehat x\in F$ such that
\begin{equation}\label{Dist}
\left|x-\widehat x\tinyspaceafterwidehat \right|=\dist(x,F).
\end{equation}

\smallbreak
Let $A\fcolon (\rn\setminus F)\to\mathcal L(\rn,Y)$ be the
function
constructed
in Theorem \ref{thm:B1ext} (with $X=\rn$ and $Z=\mathcal L(\rn,Y)$).
Define $\bar f\fcolon \rn\to Y$ by
\begin{equation}\label{Ext_of_f*}
\bar f(x):=
 \begin{cases}
\,
f(x)&\text{if $x\in F$},\\
\,
\sum\limits_{j\in \N}\phi_j(x)\left[f(\widehat{x_j})
+A(x_j)(x-\widehat{x_j})\right]
&\text{if $x\in\rn\setminus F$}.
 \end{cases}
\end{equation}


\smallbreak
As the formula for the extended function $\bar{f}$ is the same one
as in the proof of Theorem~\ref{thm:infinf} and the partition of unity
$\{\phi_j\}_{j\in \N}$ in $\rn\setminus F$ is only a~special case
of the partition of unity $\{\phi_\gamma\}_{\gamma\in \Gamma}$ used
in the proof of Theorem~\ref{thm:infinf},
assertions
(\ref{thm:fininf:item:ext})-(\ref{thm:fininf:item:infsmoothcomp}) follow immediately
by applying the proof of Theorem~\ref{thm:infinf} for the special case when $X=\rn$.

\medbreak
\MKlistopadxvb
It remains to prove
assertions
(\ref{thm:fininf:item:strict})-(\ref{thm:fininf:item:lip-Loc-Glob}).
%
%
We
\blistopadxvH
need
\elistopadxvH
some auxiliary estimates and computations.
\MKlistopadxve
Let $a\in F$. For arbitrary $x\in\rn\setminus F$ and $\markTA j\in \ixsetSx $,
by \eqref{P2}, \eqref{Sx}, \eqref{eq:rbezmin} and \eqref{Dist}, we get
\begin{equation}\label{Es_1}
|x_j-x|\leq 10r(x_j)+10r(x)\leq 40r(x)
=
2\dist(x,F),
\end{equation}
and likewise with $x_j$ in the place of~$x$ on the right-hand side
\begin{equation}
|x_j-x|\leq 10r(x_j)+10r(x)\leq 40r(x_j)
=
2\dist(x_j,F),
\end{equation}
\begin{equation}
|\tinyspacebeforewidehat \widehat x-x_j|\leq|\tinyspacebeforewidehat \widehat x-x|+|x-x_j|
\leq\dist(x,F)+2\dist(x,F)=3\dist(x,F),
\end{equation}
$$
        |\widehat{x_j}-x_j|
        = \dist(x_j, F)
        \leq|\tinyspacebeforewidehat \widehat x-x_j|\leq 3\dist(x,F),
$$
\begin{equation}\label{Es_3}
|\widehat{x_j}-\widehat x\tinyspaceafterwidehat |\leq|\widehat{x_j}-x_j|
+|x_j-\widehat x\tinyspaceafterwidehat |\leq 3\dist(x,F)+3\dist(x,F)=6\dist(x,F),
\end{equation}
\begin{equation}\label{Es_4}
|\widehat{x_j}-x|\leq|\widehat{x_j}-x_j|+|x_j-x|
\leq 3\dist(x,F)+2\dist(x,F)=5\dist(x,F)
.
\end{equation}
Since $\dist(x,F)\leq|x-a|$, by \eqref{Dist}, \eqref{Es_1} and \eqref{Es_4},
we obtain
\begin{equation}\label{Es_1_a}
|x_j-a|\leq|x_j-x|+|x-a|\leq 3|x-a|,
\end{equation}
\begin{equation}\label{Es_2_a}
|\widehat{x_j}-a|\leq|\widehat{x_j}-x|+|x-a|\leq 6|x-a|,
\end{equation}
\begin{equation}\label{Es_3_a}
|\tinyspacebeforewidehat \widehat x-a|\leq|\tinyspacebeforewidehat \widehat x-x|+|x-a|\leq 2|x-a|.
\end{equation}

For $x\in\rn\setminus F$, differentiating $\bar{f}$ at $x$, by \eqref{P1},
\eqref{P4}, \eqref{P5}, \eqref{P6},
\eqref{Sx} and \eqref{Ext_of_f*}, we get

\begin{align}
\nonumber
        (\bar{f}\,)^\prime(x)
&=
\sum\limits_{\markTA j\in \ixsetSx }
\phi_j(x)A(x_j)+\sum\limits_{\markTA j\in \ixsetSx }
\left[f(\widehat{x_j})+A(x_j)(x-\widehat{x_j})\right]
\otimes{\phi_j}^\prime(x)\\
&=\sum\limits_{\markTA j\in \ixsetSx }\phi_j(x)L(a)+
\sum\limits_{\markTA j\in \ixsetSx }\phi_j(x)\left[A(x_j)-L(a)\right]
\nonumber\\
&\qquad+\sum\limits_{\markTA j\in \ixsetSx }
\left[f(\tinyspacebeforewidehat \widehat x\tinyspaceafterwidehat )-L(a)(\tinyspacebeforewidehat \widehat x-x)\right]\otimes{\phi_j}^\prime(x)
\nonumber\\
&\qquad+\sum\limits_{\markTA j\in \ixsetSx }
\left[f(\widehat{x_j})-f(\tinyspacebeforewidehat \widehat x\tinyspaceafterwidehat )-L(a)
(\widehat{x_j}-\widehat x\tinyspaceafterwidehat )\right]\otimes{\phi_j}^\prime(x)
\nonumber\\
&\qquad+\sum\limits_{\markTA j\in \ixsetSx }
\left[(A(x_j)-L(a))(x-\widehat{x_j})\right]\otimes{\phi_j}^\prime(x)
\nonumber\\
&=L(a)+\sum\limits_{\markTA j\in \ixsetSx }\phi_j(x)\left[A(x_j)-L(a)\right]
\label{eq:Dfdole}
\\
&\qquad+\sum\limits_{\markTA j\in \ixsetSx }
\left[f(\widehat{x_j})-f(\tinyspacebeforewidehat \widehat x\tinyspaceafterwidehat )-L(a)
(\widehat{x_j}-\widehat x\tinyspaceafterwidehat )\right]\otimes{\phi_j}^\prime(x)
\nonumber\\
&\qquad+\sum\limits_{\markTA j\in \ixsetSx }
\left[(A(x_j)-L(a))(x-\widehat{x_j})\right]\otimes{\phi_j}^\prime(x).
\nonumber
\end{align}

\blistopadxvH

Now, we direct our attention to
assertions
\itemref{thm:fininf:item:strict}
and
\itemref{thm:fininf:item:lip-Loc-Glob}.
\begin{claim}\label{claim:strictAndLip}
Let
the following be defined as above:        
$F\subset \rn$, $Y$
a normed linear space,
$L\fcolon F\to \mathcal L(\rn,Y)$, $f\fcolon F\to Y$, $\bar f\fcolon \rn \to Y$ and
$A\fcolon ( \rn\setminus F ) \to \mathcal L(\rn,Y)$.
Suppose that
$a\in F$,
$r_1, r_2\in (0,\infty)\cup\{\infty\}$ and $K_1, K_2 \ge 0$ satisfy
\begin{align}\label{eq:AnearLa}
                \left\|A(t)-L(a)\right\|_{\mathcal L(\rn,Y)}
        &
        \le
                K_1
        &&
                \text{for\ every\ }t\in\rn\setminus F,\ |t-a|< r_1
\\\noalign{\noindent and}
\label{eq:LipOrStrict}
                \left\|f(z)-f(y)-L(a)(z-y)\right\|_Y
        &
        \leq
                K_2|z-y|
        &&
                \text{for\ every\ }
y,\,z\in F,\ \max\left(|y-a|,|z-a|\right)<r_2
.
\end{align}
For $x, y \in \rn$, denote
\begin{equation}\label{eq:Exy-def}
    E_{xy}:
  =
    \left\| \bar f(y) - \bar f(x) - L(a)(y-x) \right\|_Y
  =
    \sup_{\substack{T\in Y^*\\ \left\|T\right\|_{Y^*}\leq 1}}
    \left| T\left( \bar f(y) - \bar f(x) - L(a)(y-x) \right)\right|
    .
\end{equation}
Let
$r_3=\min\left( r_1/3, r_2/6 \right)$
and
$ K_3=
     (1+5\cdot 20 C_1 C_2) K_1
    +
     6\cdot 20 C_1 C_2 K_2
$,
where
$C_1$, $C_2$ are the constants from Lemma~\ref{l:specPart}.
Then
\begin{align}\label{eq:DfCont}
\left\|(\bar{f}\,)^\prime(x)-L(a)\right\|_{\mathcal L(\rn,Y)}
\le
        K_3
\qquad
&
\text{
                for all $x\in\rn\setminus F$ such that
                $
                        |x-a|
                        <
                        r_3
                $
          }
\\\noalign{\noindent and}
\label{eq:Exy-est}
   E_{xy}
\le
   33 K_3 \, \left | y - x \right |
\qquad
&
\text{
                for all $x,\,y \in\rn$
                such that
                $
                        \max\left(|x-a|,|y-a|\right)
                        <
                        r_3/2
                $.
          }
\end{align}
\end{claim}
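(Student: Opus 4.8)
The plan is to prove \eqref{eq:DfCont} first, then extract one auxiliary increment estimate, and finally obtain \eqref{eq:Exy-est} by a case analysis on the position of $x$, $y$ relative to $F$ and to the segment $[x,y]$. For \eqref{eq:DfCont}, fix $x\in\rn\setminus F$ with $|x-a|<r_3$ and bound the three sums on the right of \eqref{eq:Dfdole}. For $j\in\ixsetSx$, the geometric bounds \eqref{Es_1_a}, \eqref{Es_2_a}, \eqref{Es_3_a} put $x_j$ inside $B(a,3|x-a|)\subset B(a,r_1)$ and $\widehat{x_j},\widehat x$ inside $B(a,6|x-a|)\subset B(a,r_2)$, so \eqref{eq:AnearLa} gives $\|A(x_j)-L(a)\|_{\mathcal L(\rn,Y)}\le K_1$ and \eqref{eq:LipOrStrict} applied to the pair $\widehat{x_j},\widehat x\in F$ gives $\|f(\widehat{x_j})-f(\widehat x)-L(a)(\widehat{x_j}-\widehat x)\|_Y\le K_2|\widehat{x_j}-\widehat x|\le 6K_2\dist(x,F)$ by \eqref{Es_3}. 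Combining this with \eqref{P7}, $r(x)=\tfrac1{20}\dist(x,F)$, the bound $|x-\widehat{x_j}|\le 5\dist(x,F)$ from \eqref{Es_4}, the identity $\sum_{j\in\ixsetSx}\phi_j(x)=1$ from \eqref{P5}, and $\card(\ixsetSx)\le C_1$ from \eqref{P1}, the three sums are bounded respectively by $K_1$, by $C_1\cdot 6K_2\cdot 20C_2=120C_1C_2K_2$, and by $C_1\cdot 5K_1\cdot 20C_2=100C_1C_2K_1$; their sum equals $K_3$. (It is the cardinality bound \eqref{P1}, not \eqref{P5}, that controls the two sums involving $\phi_j^\prime(x)$, since those derivatives sum to $0$ by \eqref{P6}.)

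The auxiliary estimate is that $\|\bar f(x)-f(\widehat x)-L(a)(x-\widehat x)\|_Y\le K_3\dist(x,F)$ for every $x\in\rn\setminus F$ with $|x-a|<r_3$. I would prove it by expanding $\bar f(x)$ with \eqref{Ext_of_f*}, using $\sum_{j\in\ixsetSx}\phi_j(x)=1$, and rewriting $A(x_j)(x-\widehat{x_j})-L(a)(x-\widehat x)=(A(x_j)-L(a))(x-\widehat{x_j})+L(a)(\widehat x-\widehat{x_j})$ for each $j\in\ixsetSx$; the two resulting pieces are then estimated exactly as in the previous paragraph — by $6K_2\dist(x,F)$ via \eqref{eq:LipOrStrict} and \eqref{Es_3}, and by $5K_1\dist(x,F)$ via \eqref{eq:AnearLa} and \eqref{Es_4} — and summing against the weights $\phi_j(x)$ gives $(6K_2+5K_1)\dist(x,F)\le K_3\dist(x,F)$, the last inequality because $C_1,C_2>1$ (the same reason also gives $K_2\le K_3$, which I will use below).

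For \eqref{eq:Exy-est}, fix $x,y\in\rn$ with $\max(|x-a|,|y-a|)<r_3/2$; by convexity of balls $[x,y]\subset B(a,r_3/2)\subset B(a,r_3)$, and $E_{xy}=E_{yx}$. If $x,y\in F$ then $E_{xy}=\|f(y)-f(x)-L(a)(y-x)\|_Y\le K_2|y-x|\le K_3|y-x|$ by \eqref{eq:LipOrStrict}. If exactly one of them, say $y$, lies in $F$, then $\dist(x,F)\le|x-y|$, and writing $E_{xy}$ as a telescoping sum through $f(\widehat x)$ and using \eqref{eq:LipOrStrict} for the pair $y,\widehat x\in F$ together with the auxiliary estimate at $x$ gives $E_{xy}\le K_2(|y-x|+\dist(x,F))+K_3\dist(x,F)\le 3K_3|y-x|$. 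If $x,y\notin F$ and $[x,y]\cap F=\emptyset$, then $\bar f$ is $C^\infty$ on a neighbourhood of the compact segment $[x,y]\subset\rn\setminus F$, so the mean value inequality applied to $t\mapsto\bar f((1-t)x+ty)-tL(a)(y-x)$ — whose derivative has norm $\le K_3|y-x|$ on $[0,1]$ by \eqref{eq:DfCont}; equivalently, apply the one-variable mean value theorem to $t\mapsto T\bigl(\bar f((1-t)x+ty)\bigr)-t\,T\bigl(L(a)(y-x)\bigr)$ for each $T\in Y^*$ with $\|T\|_{Y^*}\le 1$ and take the supremum, using the second form of $E_{xy}$ in \eqref{eq:Exy-def} — yields $E_{xy}\le K_3|y-x|$. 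Finally, if $x,y\notin F$ but $[x,y]$ meets $F$ at some point $p$, then $\dist(x,F)\le|x-p|\le|x-y|$ and likewise $\dist(y,F)\le|x-y|$, so $|\widehat y-\widehat x|\le\dist(y,F)+|y-x|+\dist(x,F)\le 3|y-x|$; writing $E_{xy}$ as a telescoping sum through $f(\widehat x)$ and $f(\widehat y)$ and using the auxiliary estimate at $x$ and at $y$ and \eqref{eq:LipOrStrict} for $\widehat x,\widehat y\in F$ gives $E_{xy}\le K_3\dist(y,F)+K_2|\widehat y-\widehat x|+K_3\dist(x,F)\le(2K_3+3K_2)|y-x|\le 5K_3|y-x|$. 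In every case $E_{xy}\le 5K_3|y-x|\le 33K_3|y-x|$.

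The bookkeeping — that $\max(|x-a|,|y-a|)<r_3/2$ with $r_3=\min(r_1/3,r_2/6)$ makes every application of \eqref{eq:AnearLa} and \eqref{eq:LipOrStrict} legitimate — is immediate from \eqref{Es_1_a}--\eqref{Es_3_a}, and the geometric estimates \eqref{Es_1}--\eqref{Es_3_a} are already in hand. The only step that needs an idea is the last one: the mean value inequality is available only along a segment that lies in the open set $\rn\setminus F$, so when $[x,y]$ crosses $F$ one cannot integrate across the crossing. The resolution is that any crossing point $p\in[x,y]\cap F$ forces both $\dist(x,F)$ and $\dist(y,F)$ to be at most $|x-y|$; this is exactly what turns the detour through the nearest points $\widehat x,\widehat y$ (controlled by the auxiliary estimate) into a bound proportional to $|x-y|$ rather than to distances a priori unrelated to it. I therefore expect the dichotomy ``$[x,y]$ misses $F$'' versus ``$[x,y]$ meets $F$'' to be the main obstacle, the remainder being routine estimation.
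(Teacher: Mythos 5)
Your proof is correct, and for the hard half of the claim --- the increment estimate \eqref{eq:Exy-est} when the segment $L_{xy}$ meets $F$ --- it takes a genuinely different and simpler route than the paper. The paper cuts $L_{xy}$ at the first and last points $u,v$ of $L_{xy}\cap F$, replaces them by nearby points $\bar u,\bar v\in\rn\setminus F$, and estimates $E_{\bar u\bar v}$ through a double convex combination $\sum_j\sum_k\phi_j(\bar u)\phi_k(\bar v)\bigl(f(\widehat{x_k})-f(\widehat{x_j})\bigr)$ over the partition of unity, precisely in order to avoid evaluating (or assuming continuity of) $\bar f$ at the points $u,v\in F$. You sidestep the same difficulty more directly: your auxiliary inequality $\|\bar f(x)-f(\widehat x)-L(a)(x-\widehat x)\|_Y\le K_3\dist(x,F)$ --- which follows from \eqref{Ext_of_f*} by exactly the manipulations already used for \eqref{eq:Dfdole}, with the weights $\phi_j(x)$ summing to one rather than the derivative bounds --- lets you telescope $\bar f(y)-\bar f(x)-L(a)(y-x)$ through $f(\widehat x)$ and $f(\widehat y)$, and the single observation that a crossing point $p\in L_{xy}\cap F$ forces $\dist(x,F),\dist(y,F)\le|y-x|$ (hence $|\widehat y-\widehat x|\le3|y-x|$ via \eqref{Es_3_a}-type bounds) turns everything into multiples of $|y-x|$. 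This never evaluates $\bar f$ on $F$ except at $x$ or $y$ themselves, so no continuity of $\bar f$ on $F$ is needed; it also yields the sharper constant $5K_3$ in place of $33K_3$, which of course still verifies \eqref{eq:Exy-est}. The proof of \eqref{eq:DfCont} and the mean-value case $L_{xy}\subset\rn\setminus F$ coincide with the paper's, and your bookkeeping of where \eqref{eq:AnearLa} and \eqref{eq:LipOrStrict} are legitimately applied (via \eqref{Es_1_a}--\eqref{Es_3_a} and $r_3=\min(r_1/3,r_2/6)$) is accurate throughout.
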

\elistopadxvH

Postponing the proof of Claim~\ref{claim:strictAndLip},
we now proceed to the proof of
assertion
(\ref{thm:fininf:item:strict}). As its conclusion clearly holds
for $a\in\interior(F)$, we can further assume that $a\in\boundary F$.
\blistopadxvH
Fix $\eps_1>0$.
Note that $L$ is assumed to be continuous at $a$
(with respect to $F$).
By \eqref{eq:continuity} from Theorem \ref{thm:B1ext}, there exists
$ r_1 > 0 $ such that
(cf.~\eqref{eq:AnearLa})
\begin{equation}\label{Cont_of_A}
\left\|A(t)-L(a)\right\|_{\mathcal L(\rn,Y)}<\eps_1\qquad \text{for\ every\ }t\in\rn\setminus F,\ |t-a|< r_1.
\end{equation}
\elistopadxvH
Since we assume that $L(a)$ is a~strict derivative of $f$ at $a$ (with respect to $F$),
there exists $ r_2 > 0 $ such that
(cf.~\eqref{eq:LipOrStrict})
\begin{equation}
\left\|f(z)-f(y)-L(a)(z-y)\right\|_Y\leq\eps_1|z-y|\qquad \text{for\ every\ }
y,\,z\in F,\ \max\left(|y-a|,|z-a|\right)<r_2.
\end{equation}
By \eqref{eq:DfCont} from Claim~\ref{claim:strictAndLip} applied with $K_1=K_2=\varepsilon_1$,
we get $r_3>0$ such that
\begin{align}\label{eq:DfCont-applied}
\left\|(\bar{f}\,)^\prime(x)-L(a)\right\|_{\mathcal L(\rn,Y)}
&\le
        K_3
\qquad
&
&
\text{
                for all $x\in\rn\setminus F$ such that
                $
                        |x-a|
                        <
                        r_3
                $,
          }
\end{align}
with
$K_3=
 \left[1+220C_1C_2\right]\eps_1
$.
Since $\eps_1>0$ was arbitrary and $(\bar{f})^\prime(a)=L(a)$
(note that we already proved~\itemref{thm:fininf:item:frechet}),
we get that
$(\bar{f})^\prime$ is continuous at $a$ with respect to $(\rn\setminus F)\cup\{a\}$.

Likewise,
the estimate of $E_{xy}$
provided by
\eqref{eq:Exy-est}
shows
that $L(a)$ is the strict derivative of $\bar f$ at $a$.
Hence,
the proof of
assertion
(\ref{thm:fininf:item:strict})
is finished.

\smallbreak
\bledenxvipoWSOK
To prove
assertion
\itemref{thm:fininf:item:lip-Loc-Glob},
we prove that
               if $a\in F$,
  $r\in (0,\infty)\cup\{\infty\}$,
               $L$ is bounded on
               $B(a,72r) \cap F$
               and $f$ is Lipschitz continuous on
               $B(a,12r) \cap F$,
               then $\bar{f}$ is Lipschitz continuous on
               $B(a,r)$.
Both statements of~\itemref{thm:fininf:item:lip-Loc-Glob} then obviously follow
either
by a~standard compactness argument
or
using the case $r=\infty$.
\eledenxvipoWS

Assume that
               $a\in F$,
  $r\in (0,\infty)\cup\{\infty\}$,
               $L$ is bounded on
               $B(a,72r) \cap F$
               and $f$ is Lipschitz continuous on
               $B(a,12r) \cap F$.
               Let $K_0$ denote the Lipschitz constant of $f$.
               Then there is $C_0>0$ such that $\|A\|_{\mathcal L(\rn,Y)} \le C_0$ on
               $B(a,6r)\cap (\rn\setminus F)$ since $A$ was obtained from Theorem~\ref{thm:B1ext}
               (cf.~\eqref{eq:boundedness}).
             Thus we have~\eqref{eq:AnearLa} with $K_1= C_0 + \left\|L(a)\right\|_{\mathcal L(\rn,Y)}$ and $r_1=6 r$.
             Using the Lipschitz property of $f$, we obtain~\eqref{eq:LipOrStrict}
             with
             $K_2=K_0+\left\|L(a)\right\|_{\mathcal L(\rn,Y)}$
             and
             $r_2=12r$.
  An application of Claim~\ref{claim:strictAndLip}, namely of~\eqref{eq:Exy-est},
  gives
\[
  \left \| \bar f(y) - \bar f(x) \right\| _ Y
  \le \left( 33 K_3 + \left\| L(a) \right\|_{\mathcal L(\rn,Y)} 
      \right) \, \left | y - x \right|
\]
        for every $x,y \in \rn$ such that $\max( \left|x-a\right|, \left|y-a\right|)
        < r_3/2 = r$,
        which is the required Lipschitz property
        of~$\bar f$, cf.~\itemref{thm:fininf:item:lip-Loc-Glob}.

This concludes the proof of Theorem~\ref{thm:fininf}
except that we still have to show that Claim~\ref{claim:strictAndLip}
holds true.
\end{proof}

\begin{proof}[Proof of Claim~\ref{claim:strictAndLip}]
Let also the other symbols be defined as above
(that is, $x_j$, $\phi_j$ ($j\in \N$), $\ixsetSx$ and $r(x)$ ($x\in \rn\setminus \theset$)
are as in Lemma~\ref{l:specPart},
$\widehat x$ as in~\eqref{Dist} etc.).
Let
$x\in\rn\setminus F$ and
$
        |x-a|
        < r_3
        :=
        \min\left(\frac{r_1}{3},\frac{r_2}{6}\right)
$.
Then, for every $\markTA j\in \ixsetSx $, using \eqref{Es_1_a}, \eqref{Es_2_a}
and \eqref{Es_3_a}, we get $|x_j-a|<r_1$
and
$
\max (
        |\widehat{x_j}-a|,
        |\tinyspacebeforewidehat \widehat x-a|
       )
       <r_2
$.

\blistopadxvH
\bledenxvipoWSOK
By \eqref{eq:Dfdole},
\eledenxvipoWS
\begin{align}
\left\|
        (\bar{f}\,)^\prime(x)-L(a)
\right\|_{\mathcal L(\rn,Y)}
&\leq\sum\limits_{\markTA j\in \ixsetSx }\phi_j(x)\left\|A(x_j)-L(a)\right\|_{\mathcal L(\rn,Y)}
\nonumber\\
&\qquad+\sum\limits_{\markTA j\in \ixsetSx }
\left\|f(\widehat{x_j})-f(\tinyspacebeforewidehat \widehat x\tinyspaceafterwidehat )-L(a)(\widehat{x_j}-\widehat x\tinyspaceafterwidehat )\right\|_Y
\left|{\phi_j}^\prime(x)\right|\nonumber\\
&\qquad+\sum\limits_{\markTA j\in \ixsetSx }
\left\|A(x_j)-L(a)\right\|_{\mathcal L(\rn,Y)}|x-\widehat{x_j}|
\left|{\phi_j}^\prime(x)\right|\nonumber
.
\end{align}
        Estimating the first term
        by~\eqref{eq:AnearLa}
        together with~\eqref{P3} and \eqref{P5},
        the second one by~\eqref{eq:LipOrStrict}
        with~\eqref{P1},
        \eqref{P7}
        and
        \eqref{Es_3},
        and
        the third one by~\eqref{eq:AnearLa}
        with~\eqref{P1},
        \eqref{P7}
        and
        \eqref{Es_4},
        we get
\begin{align}
\left\|
        (\bar{f}\,)^\prime(x)-L(a)
\right\|_{\mathcal L(\rn,Y)}
&\leq
                K_1
        +
                6\dist(x,F)\,\frac{20 C_1 C_2}{\dist(x,F)}\, K_2
        +
                5\dist(x,F)\,\frac{20 C_1 C_2}{\dist(x,F)}\, K_1
                \nonumber
\\
   &\leq
     (1+5\cdot 20 C_1 C_2) K_1
    +
     6\cdot 20 C_1 C_2 K_2
 = K_3
.
\label{eq:DfContProved}
\end{align}
        Thus we obtained \eqref{eq:DfCont}.
\elistopadxvH

\medbreak

Next,
we want to prove~\eqref{eq:Exy-est}, the estimate of $E_{xy}$.
If (\ref{thm:fininf:item:cont})
of Theorem~\ref{thm:fininf}
were applicable at every point of $
a\in F
$, this could have been done easily using
the continuity of $(\bar f)'$ at
$
a\in F
$
(also the
mean value theorem
would be used on parts of the segment $L_{xy}$
together with the estimate
$E_{xy} \le  E_{xu} + E_{uv} + E_{vy}$
analogously to the arguments that follow),
but we can deal with the general case as well.

        From~\eqref{eq:AnearLa},
        we have
\begin{equation}\label{eq:Abounded}
          \left\|A(x)\right\|_{\mathcal{L}(\rn,Y)} \le M
\end{equation}
whenever $\left|x-a\right| < r_1$,
where
$ M = \left\| L(a) \right\|_{\mathcal{L}(\rn,Y)}
                        + K_1
$.

Note that $K_2  \le K_3$ by the definition of $K_3$, since $C_1, C_2 > 1$.
Fix $x,y\in\rn$ such that $\max\left(|x-a|,|y-a|\right)< r_3/2$.
We will show that
\[
   E_{xy} \le
   33
   K_3
   \left| y - x \right|
   .
\]
As this inequality trivially holds for $x=y$, we will further suppose that $x\neq y$.

\smallbreak

Let $L_{xy}$ denote the (closed) segment connecting $x$ and $y$.
We will distinguish several possible cases.

If $L_{xy} \subset\rn\setminus F$
and $T\in Y^*$ with $\|T\|_{Y^*}\leq 1$,
then there exists $\xi_T \in L_{xy}$ such
that
\[
        T\left( \bar f(y) - \bar f(x) \right)= \left((T(\bar f\,))'(\xi_T)\right) (y-x)
        .
\]
By \eqref{eq:DfContProved}, we simply get
\begin{equation}\label{eq:ExyVDoplnkuF}
        E_{xy}
    \le
        \sup_{\substack{T\in Y^*\\ \left\|T\right\|_{Y^*}\leq 1}} \left\| T\right\|_{Y^*}
        \left\| (\bar f\,)'(\xi_T) - L(a) \right\|_{\mathcal{L}(\rn,Y)}  \left| y - x \right|
    \leq
        K_3  \left| y - x \right|.
\end{equation}

If $x, y \in F$, we have $E_{xy} \leq K_2 \left| y - x \right|$ by \eqref{eq:LipOrStrict}.

In the remaining cases,
$L_{xy} \cap F \neq \emptyset$
and one or both points $x$, $y$
lie
in $\rn\setminus F$.

If $x,y\in\rn\setminus F$ then
segment $L_{xy}$ can be divided into two or three segments
as follows:

1.
$L_{xu}$ with $u\in F$ and $L_{xu} \setminus \{u\} \subset \rn\setminus F$.

2.
$L_{uv}$ with $u, v \in F$, which might possibly be degenerate ($v=u$).

3. $L_{vy}$ with $v\in F$ and $L_{vy} \setminus \{v\} \subset \rn\setminus F$.

\iftrue
  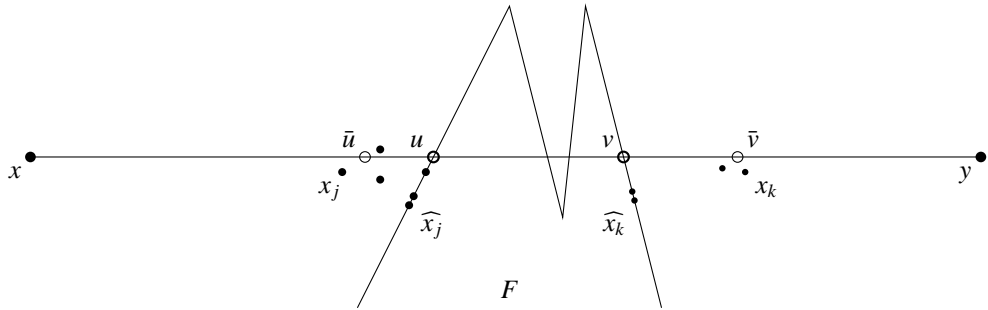
\begin{figure}[hbt]
  \begin{center}
\def\pX{(-4.3,0)}
\def\pY{(8.2,0)}
\begin{tikzpicture}
    \draw \pX node[below left]{$x$}
          --
          \pY node[below left]{$y$};
    \fill \pX circle (2pt);
    \fill \pY circle (2pt);
    \draw (0,-2) coordinate (a1)
        -- (2,2) coordinate (a2)
        -- (2.7, -0.8) --
           (3,2) coordinate (b1)
        -- (4,-2) coordinate (b2);
    \path (2, -2) node[above] {$F$};
    \draw[thick] (1,0) node[above left]{$u$}
          circle (2pt);
    \draw (0.1,0) node[above left]{$\bar u$}
          circle (2pt);
    \foreach \x/\y in {0.3/0.1, 0.3/-0.3, -0.2/-0.2
                       } {
          \fill (\x,\y) circle (1.5pt);
          \fill ($ (a1)!(\x,\y)!(a2) $) circle(1.5pt);
    }
    \path (0.7,-0.6) node [below right ]{$\widehat{x_j}$};
    \path (-0.1,-0.2) node [below left ]{$ x_j$};
    \draw[thick] (3.5,0) node[above left]{$v$}
          circle (2pt);
    \draw (5,0) node[above right]{$\bar v$}
          circle (2pt);
    \foreach \x/\y in {5.1/-0.2, 4.8/-0.15
                       } {
          \fill (\x,\y) circle (1.2pt);
          \fill ($ (b1)!(\x,\y)!(b2) $) circle(1.2pt);
    }
    \path (5.1,-0.2) node [below right ]{$x_k$};
    \path (3.65,-0.6) node [below left ]{$\widehat{x_k}$};
\end{tikzpicture}
    \caption{The case $x,y\in \mathbb R^n\setminus F$ with $L_{xy}$ intersecting $F$.
    Published with permission of \copyright\ Jan Kol\'a\v{r} 2016. All Rights Reserved.\relax
    }
    \label{fig:segmentLxy}
  \end{center}
  \end{figure}
\fi

\smallbreak

The reader might welcome an informal
remark, that we will
not use the estimate of $E_{uv}$
(which could be obtained immediately from \eqref{eq:LipOrStrict}),
but replace it by a~convex combination of estimates of $E_{\widehat{x_j}, \widehat{x_k}}$
with
$\widehat{x_j}$,
$\widehat{x_k}$
related to the definition
of $\bar f(\bar u)$, $\bar f(\bar v)$,
where $\bar u,\bar v \in \rn\setminus F$ are points approximating $u$, $v$
(see Figure~\ref{fig:segmentLxy}).
This way we do not need the continuity of $\bar f$ at points
$u,v\in F$.

We omit the case $x\in F$, $y\in \rn\setminus F$ since it is analogous to the case that follows.

If
\begin{equation}\label{eq:caseX}
  x\in \rn\setminus F \text{ and } y\in F
\end{equation}
then $L_{xy}$ divides into two segments
$L_{xu}$ and $L_{uv}$ as above with $v=y$
(we can consider $L_{vy}$ as degenerate).
We use a~convex combination of estimates of
$E_{\widehat{x_j}, y}$ (again provided by \eqref{eq:LipOrStrict}).
Apart from that, this case is similar to the
most complex
case $x,y\in \rn\setminus F$
and therefore we
will not fully threat both of them.
\bledenxvipoWSOK
  (Formally, the case \eqref{eq:caseX} can be treated together
  with the case $x,y\in \rn\setminus F$
\eledenxvipoWS
if we extend our notation as follows:
Let $\phi_0(z) = 1$ if $z\in F$ and $\phi_0(z) = 0$ if $z\in \rn\setminus F$.
Let $x_0= y$,
$\widehat{x_0}= y$
and
\bledenxvipoWSOK
$A(x_0) = 0$.
\eledenxvipoWS
Then
\bledenxvipoWSOK
 $\{ \phi_j \} _ {j\in \N\cup \{0\} }$
\eledenxvipoWS
is a~partition of unity
and  \eqref{Ext_of_f*} remains true, with unchanged values of $\bar f$,
if the sum is extended to include $j=0$.
Moreover, the second line of \eqref{Ext_of_f*}
then
gives the correct value of $\bar f(v)$
\bledenxvipoWSOK
even though we have $v=y\in F$.
\eledenxvipoWS
We also define $\ixsetSxzero =\markTD \{0\}$.)
\smallbreak

Let us concentrate on the case $x,y\in \rn\setminus F$.
Let
\begin{equation}\label{eq:defm}
 m:=
        \left| y-x \right| \min( K_3 / M , 1 / 4 )
        .
\end{equation}
  %
  %
  %
We choose a~point $\bar u \in L_{xu} \setminus \{u\}$
 with $\left| \bar u - u \right| < m$
and likewise $\bar v \in L_{vy} \setminus \{v\}$
 with $\left| \bar v - v \right| < m$.
(For the case \eqref{eq:caseX} we let $\bar v=v=y$.)
  %
  %
Since $L_{x\bar u} \subset \rn\setminus F$, we already estimated in~\eqref{eq:ExyVDoplnkuF} that
\begin{equation}\label{eq:Exu}
  E_{x\bar u}  \leq K_3  \left| \bar u - x \right|
 \le K_3 \left| y - x \right|
  .
\end{equation}
Likewise,
\begin{equation}\label{eq:Evy}
  E_{\bar vy}  \leq K_3  \left| y - \bar v \right|
 \le K_3 \left| y - x \right|
  .
\end{equation}
By \eqref{Es_4},
we have
\begin{align}
\label{eq:hatxj-baru}
        \left|\widehat{x_j} - \bar u\right|
\le
        5\dist (\bar u, F)
&
\le
        5 \left| \bar u - u \right|
< 5 m
 \\ \text{and}\quad
\label{eq:hatxk-barv}
        \left|\widehat{x_k} - \bar v\right|
&
\le
        5 \left| \bar v - v \right|
< 5 m
\end{align}
whenever
$\markTA j \in \ixsetSbaru$
and
$\markTA k \in \ixsetSbarv$, in which case therefore also
\begin{equation}\label{eq:hatxk-hatxj--barv-baru}
      \Bigl|
                \left(\widehat{x_k} - \widehat{x_j} \right)
                -
                \left( \bar v - \bar u \right)
      \Bigr|
     \le
        \left|\widehat{x_k} - \bar v\right| +
        \left|\widehat{x_j} - \bar u\right|
     \le
        10 m
.
\end{equation}
Since $\bar u \in L_{xy} \subset B(a,r_3/2)$, clearly
           $ \left| \bar u - a  \right | < r_3/2 $,
and from \eqref{Es_1}, we get
$
        \left| x_j - a \right|
\blistopadxvH
        \le \left| x_j - \bar u \right|
           + \left| \bar u - a     \vphantom{ x_j } \right |
        \le
            2 \dist(\bar u, F) +
            \left| \bar u - a  \right |
        \le
            3 \left| \bar u - a  \right |
        < 3 r_3/2
\elistopadxvH
        \le r_1
        $
whenever $\markTA j \in \ixsetSbaru$.
The values of $\bar f(\bar u)$ (and similarly also of $\bar f(\bar v)$) are defined by \eqref{Ext_of_f*}
where $\phi_j(\bar u)$ can be nonzero only when $\markTA j \in \ixsetSbaru$.
Using \eqref{Ext_of_f*}, the triangle inequality, \eqref{P3}, \eqref{P5}, \eqref{eq:Abounded}
and \eqref{eq:hatxj-baru}, we obtain
\[
   \Bigl\|
      \bar f(\bar u) - \sum_j \phi_j(\bar u) f(\widehat{x_j})
   \Bigr\|
    _Y
    \leq
    \sum_j \phi_j(\bar u) \left\|A(x_j)\right\|_{\mathcal{L}(\rn,Y)} \left| \bar u - \widehat{x_j}\right|
   \le
   5 M m
   \le
    5K_3 \left| y - x \right|
   .
\]
Likewise,
\[
   \Bigl\| \bar f(\bar v) - \sum_k \phi_k(\bar v) f(\widehat{x_k}) \Bigr\|
    _Y
   \le
   5 M m
   \le
    5K_3 \left| y - x \right|
    .
\]
Using identities $\phi_j=\phi_j\sum_k \phi_k$ and $\phi_k = \phi_k \sum_j \phi_j $, we can write
\begin{equation}\label{eq:esti1}
   \Bigl\|
        \bar f(\bar v) - \bar f (\bar u)
        -
        \sum_j \sum_{k} \phi_j(\bar u) \phi_k(\bar v) \left(  f(\widehat{x_k}) - f(\widehat{x_j}) \right)
   \Bigr\|
    _Y
   \le 10K_3 \left| y-x \right|
.
\end{equation}
Since $\widehat{x_j}, \widehat{x_k} \in F$,
we get
by \eqref{eq:LipOrStrict},
$K_2 \le K_3$,
\eqref{eq:hatxk-hatxj--barv-baru}
and
\eqref{eq:defm}
\[
         \left\| f(\widehat{x_k})
         -
         f(\widehat{x_j}) - L(a) (\widehat{x_k}- \widehat{x_j}) \right\|
    _Y
    \leq
         K_2 \left| \widehat{x_k} - \widehat{x_j} \right|
     \le
        K_3\,(
         10
            m
         + \left| \bar u - \bar v \right|
         )
     \le
         11
         K_3 \left| y - x \right|
\]
whenever $\markTA j \in \ixsetSbaru$ and $\markTA k \in \ixsetSbarv$.
Hence, again by \eqref{eq:hatxk-hatxj--barv-baru}, we obtain
(see also \eqref{eq:defm} and note that $\left\|L(a) \right\|_{\mathcal{L}(\rn,Y)}\le M$)
\[
         \left\| f(\widehat{x_k}) - f(\widehat{x_j}) - L(a) (\bar v - \bar u ) \right\|
    _Y
    \leq
     11
         K_3 \left| y - x \right| +
         10 m \left\|L(a) \right\|_{\mathcal{L}(\rn,Y)}
      \le
         21
          K_3 \left| y - x \right|
         ,
\]
which combines with \eqref{eq:esti1} and
$ \sum_j \! \sum_{k} \phi_j(\bar u) \phi_k(\bar v) = 1 $
to
\begin{equation}
   \Bigl\|
        \bar f(\bar v) - \bar f (\bar u)
        -
        L(a) (\bar v - \bar u )
   \Bigr\|
    _Y
   \le
         31
          K_3 \left| y - x \right|
 .
\end{equation}
So
\begin{equation}\label{eq:Euv}
            E_{\bar u \bar v}
   \le
         31
          K_3 \left| y - x \right|
 .
\end{equation}
By \eqref{eq:Exu}, \eqref{eq:Euv} and \eqref{eq:Evy},
since
$E_{xy} \le  E_{x \bar u} + E_{\bar u \bar v} + E_{\bar v y}$,
\begin{equation}\label{eq:strict-posledni-v-dukaze}
            E_{xy}
   \le
         33
          K_3 \left| y - x \right|
 ,
\end{equation}
which concludes the proof of Claim~\ref{claim:strictAndLip}.
\end{proof}


The following corollary provides a~vector-valued version of \cite[Theorem~3.1]{KZ}.
\begin{corollary}\label{cor:KZ_do_Y} 
Let $F\subset\rn$ be a~nonempty closed set, $Y$ a~normed linear space, $f\fcolon F\to Y$ an~arbitrary function
and $L\fcolon F\to\mathcal L(\rn,Y)$ a~relative Fr{\'e}chet derivative of $f$ (on $F$)
such that $L$ is Baire one on $F$. Then there exists a~function $\bar{f}\fcolon \rn\to Y$ such that
\begin{enumerate}[\textup\bgroup (i)\egroup]
   \item $\bar{f}$ is Fr{\'e}chet differentiable on $\rn$,
   \item ${\bar{f}}=f$ and $(\bar{f})^\prime=L$ on $F$,
   \item if $a\in F$, $L$ is continuous at $a$ and $L(a)$ is a~relative strict derivative of $f$ at $a$
(with respect to $F$), then the Fr{\'e}chet derivative $(\bar{f})^\prime$ is continuous at $a$,
   \item ${\bar{f}}\in\mathcal C^{\infty}\left(\rn\setminus F,Y\right)$.
\end{enumerate}
\end{corollary}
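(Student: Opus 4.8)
The plan is to obtain this corollary as an essentially immediate specialization of Theorem~\ref{thm:fininf}. First I would apply Theorem~\ref{thm:fininf} to the data $F$, $Y$, $f$, $L$; the only hypothesis it requires is that $L$ be Baire one on $F$, which is assumed here. This produces a function $\bar f\fcolon\rn\to Y$ satisfying conclusions \itemref{thm:fininf:item:ext}--\itemref{thm:fininf:item:lip-Loc-Glob}, and the whole argument then consists of reading off the four assertions of the corollary from those.

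For assertions (i) and (ii): since $L$ is a~relative Fr\'echet derivative of $f$ on the whole of $F$, the operator $L(a)$ is a~relative Fr\'echet derivative of $f$ at $a$ (with respect to $F$) for \emph{every} $a\in F$, so the hypothesis of \itemref{thm:fininf:item:frechet} is met at every such point and I get $(\bar f)^\prime(a)=L(a)$ for all $a\in F$. I would stress that this conclusion genuinely means that $\bar f$, regarded as a~function on all of $\rn$, is Fr\'echet differentiable at $a$ with derivative $L(a)$: in the proof of Theorem~\ref{thm:infinf} the difference quotient is shown to vanish both over $\rn\setminus F$ and --- because $\bar f|_F=f$ and $L(a)$ is a~relative Fr\'echet derivative of $f$ at $a$ --- over $F$. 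Hence $\bar f=f$ and $(\bar f)^\prime=L$ on $F$, which is (ii); and, since $\bar f$ is moreover $C^\infty$ (in particular Fr\'echet differentiable) on $\rn\setminus F$ by \itemref{thm:fininf:item:infsmoothcomp}, it is Fr\'echet differentiable on all of $\rn$, which is (i). Assertion (iv) is \itemref{thm:fininf:item:infsmoothcomp} verbatim.

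For assertion (iii), suppose $a\in F$, $L$ is continuous at $a$ (with respect to $F$) and $L(a)$ is a~relative strict derivative of $f$ at $a$. Then \itemref{thm:fininf:item:strict} gives that $(\bar f)^\prime$ is continuous at $a$ with respect to $(\rn\setminus F)\cup\{a\}$, while by (ii) we have $(\bar f)^\prime|_F=L$, which is continuous at $a$ with respect to $F$ by hypothesis. I would glue these along an arbitrary sequence $x_k\to a$ in $\rn$: on the terms lying in $F$ we have $(\bar f)^\prime(x_k)=L(x_k)\to L(a)=(\bar f)^\prime(a)$, and on the terms lying in $\rn\setminus F$ we have $(\bar f)^\prime(x_k)\to(\bar f)^\prime(a)$ by the continuity with respect to $(\rn\setminus F)\cup\{a\}$; hence $(\bar f)^\prime(x_k)\to(\bar f)^\prime(a)$ in all cases, so $(\bar f)^\prime$ is continuous at $a$ (with respect to $\rn$). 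I do not anticipate a~genuine obstacle: the corollary is immediate modulo the two bookkeeping points just made, namely that \itemref{thm:fininf:item:frechet} is to be read as full Fr\'echet differentiability on $\rn$ at the boundary points of $F$, and the elementary two-subsequence gluing used for (iii) --- both routine.
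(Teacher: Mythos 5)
Your proposal is correct and follows exactly the route the paper intends: the corollary is a direct specialization of Theorem~\ref{thm:fininf}, with (i)--(ii) coming from \itemref{thm:fininf:item:frechet} applied at every point of $F$ together with \itemref{thm:fininf:item:infsmoothcomp}, and (iii) obtained by gluing the continuity of $(\bar f)'$ with respect to $(\rn\setminus F)\cup\{a\}$ from \itemref{thm:fininf:item:strict} with the continuity of $(\bar f)'|_F=L$ at $a$, exactly as Remark~\ref{rem:ZAthm:diffext}\itemref{rem:ZAthm:diffext:item:C1} indicates. Both bookkeeping points you flag (full Fr\'echet differentiability at boundary points, and the two-case gluing) are the right ones and are handled correctly.
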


\begin{remark}
The previous corollary easily implies the $C^1$ case of
Whitney's extension theorem for vector-valued functions (see, e.g., \cite[Theorem~3.1.14]{Fed}).
Indeed, assuming that the assumptions of Whitney's theorem are fulfilled, it is sufficient
to show that $L(a)$ is a~strict derivative of $f$ at~$a$ for every $a\in F$
(which involves a~straightforward and easy computation only, cf.~\cite[Remark~3.2]{KZ})
and then to apply Corollary~\ref{cor:KZ_do_Y}.

\end{remark}

\let\pleaseDoSpellCheck(

\begin{remark}
\myParBeforeItems
\begin{enumerate}[(a)]

   \item In (\ref{thm:fininf:item:strict}) of Theorem \ref{thm:fininf}, we cannot expect the Fr{\'e}chet derivative
$(\bar{f})^\prime$ to be continuous at~$a$ with respect to the whole space $\rn$
unless appropriate assumptions are added (cf.\ Remark~\ref{rem:ZAthm:diffext}\itemref{rem:ZAthm:diffext:item:C1}).
Indeed,
consider $n=2$,
$F=[0,1]\times\{0\}$, $f\fcolon F\to\R$ given by $f(x,0)=x^7\left|\sin\frac{1}{x}\right|$ for $x\in(0,1]$
and $f(0,0)=0$, $L=0$ and $a=(0,0)$.
Note that $L(a)$ is a relative {\em strict} derivative of $f$ at $a$.
If we extend $f$ according to Theorem \ref{thm:fininf}, then the extended
function $\bar{f}$ is not Fr{\'e}chet differentiable in any neighborhood of $a$, since both $f$ and $\bar{f}$
do not have a~Fr{\'e}chet derivative at those points of~$F$ at which $\sin\frac{1}{x}$ changes its sign. However,
the Fr{\'e}chet derivative $(\bar{f})^\prime$ is continuous at $a$ with respect to $(\R^2\setminus F)\cup\{a\}$
as Theorem \ref{thm:fininf}\itemref{thm:fininf:item:strict} states.

\let\pleaseDoSpellCheck(

\smallbreak
\item\label{poznamka46itemB}
  In Theorem~\ref{thm:fininf}\itemref{thm:fininf:item:strict},
  neither the continuity of $(\bar f)'$ at $a$ nor the conclusion
  that $L(a)$ is the strict derivative of~$\bar f$ at~$a$
  (even with respect to $(\rn \setminus F) \cup \{a\}$)
  can be obtained
  when we remove the assumption
  that $L(a)$ is a~relative
  {\em strict}
  derivative of $f$ at $a$ (with respect to $F$).
Indeed, consider $F=\{0\}\cup\{\frac{1}{n}\setcolon n\in\N\}\subset \R $ and let $f\fcolon F\to\R$ be given by
$f(\frac{1}{n})=\frac{(-1)^n}{n^2}$ for $n\in\N$ and $f(0)=0$.
Let $L(x)=0$ ($x\in F$) and $a=0$.
\bledenWSxviOK
Obviously, $L(a)=0$ is a~relative derivative of $f$ at $a$ with respect to $F$.
\eledenWSxvi
For $n\in\N$, the distance between
$x_n := \frac{1}{n}$ and $x_{n+1}$
is less than
$\frac{1}{n^2}$ and the absolute increment of $f$ between these two points is greater than $\frac{1}{n^2}$.
Applying Theorem \ref{thm:fininf}, we obtain the extended function $\bar{f}$ on~$\R$ that is
continuous at
every $x_n$
due to condition
\itemref{thm:fininf:item:cont}.
By the mean value theorem,
for every $n\in \N$,
the absolute value of the derivative of $\bar{f}$ at some point
of the interval $(x_{n+1}, x_n)$
is greater than $1$.
Therefore
$(\bar{f})^\prime$ cannot be continuous at~$a$
with respect to $(\R\setminus F) \cup \{a\}$.
Also, $L(a)=0$ cannot be a~strict derivative of~$\bar f$ at~$a$
(even with respect to~$(\R\setminus F) \cup \{a\}$).

\let\pleaseDoSpellCheck(

\smallbreak
\item
  Likewise,
  neither of the two conclusions
  of Theorem~\ref{thm:fininf}\itemref{thm:fininf:item:strict}\footnote{\relax
          Namely
          that $(\bar f)'$ is continuous at $a$
          or 
          that $L(a)$ is the strict derivative of~$\bar f$ at~$a$
          (even
          only
          with respect to $(\rn \setminus F) \cup \{a\}$).
   }
  can be obtained without assuming that $L(a)$ is
  continuous at $a$
  with respect to $F$.
  Consider the same set $F$
  as in \itemref{poznamka46itemB} together with $a=0$, $f=0$ on $F$, $L(0)=0$ and $L(\frac 1n )=(-1)^n$ for $n\in \N$.
Applying Theorem \ref{thm:fininf}, we obtain the extended function $\bar{f}$ on~$\R$ that
has $(-1)^n$ as the
derivative at isolated point
$\frac 1n$
due to condition \itemref{thm:fininf:item:frechet}.
Hence $\bar{f}$ is continuous
at~$\frac 1n$.
By the mean value theorem,
for every $n\in \N$,
the absolute value of the derivative of~$\bar{f}$ at some point
close to $\frac 1n$
is greater than $\frac 12$.
Note that $(\bar f)'(0)=L(0)=0$.
Therefore
$(\bar{f})^\prime$ cannot be continuous at~$a$
with respect to $(\R\setminus F) \cup \{a\}$.
Also, $L(a)=0$ cannot be a~strict derivative of~$\bar f$ at~$a$
(even with respect to~$(\R\setminus F) \cup \{a\}$).
\end{enumerate}
\end{remark}

\begin{remark}\label{rem:finDimNecess}  
  If
  statements
  \itemref{thm:fininf:item:strict} or \itemref{thm:fininf:item:lip-Loc-Glob}
  are required
  in Theorem~\ref{thm:fininf},
  the
  assumption
  $F\subset \rn$ cannot be generalized to $F\subset X$, replacing
  $\rn$ by an (infinitely dimensional) Banach space
  $X$.
  In other words,
  the condition $\dim X<\infty$ cannot be removed from
  Theorem~\ref{thm:difext}
  \itemref{thm:difext:item:strict}\itemref{thm:difext:item:lip-Loc-Glob}.

  Indeed, let $p\in [1,2)$, $X=L_p(0,1)$, $Y=l_2$,
  $e\in X$ with $\left\|e\right\|_X =1$.
  By \cite[Theorem~3]{JLext},
  for every integer $n > 10$, there is a~finite set $F_n\subset X$
  and a~function $f_n \fcolon F_n \to Y$, such that
  $\Lip f > c_n \Lip f_n$ for every $f\fcolon X\to Y$ that extends $f_n$,
  where
  $c_n \to \infty$.
  The actual value of
  $c_n=\tau\cdot (\log n / \log \log n)^{1/p-1/2} $
  (for some $\tau > 0$)
  is not important for our purposes.
  By translating and scaling down the set, and by scaling the values of $f_n$
  we can assure that
  $F_n\subset B_X(2^{-n} e, 2^{-2n-1})$,
  $f_n(F_n) \subset B_{Y}(0, 2^{-2n-1})$
  and
  $\Lip f_n = c_n^{-1/2}$.
  Then the property of $f_n$ is that
  it has no extension $\bar f_n\fcolon X\to Y$ with
  $\Lip \bar f_n \le d_n := c_n \, \Lip f_n = c_n^{1/2}$.
  Note that $c_n^{-1/2} \to 0$ and $d_n\to \infty$.
  Let $F=\{0\} \cup \bigcup_{
  n
  > 10
  }
  F_n$ and define $f\fcolon F\to Y$ by $f(0)=0$, $f|_{F_n}=f_n$.
  The scaling was chosen so that $0\in \mathcal L(X,Y)$ is
  a~relative strict derivative of $f$ (with respect to~$F$) at $a:=0 \in X$.
  Since every $F_n$ is finite, $0$ is the only accumulation point of $F$.
  Let $L(x)=0$ for $x\in F$. Consider $\bar f$ that is an extension of $f$ as
  in the theorem.

  If $(\bar f)'$ is continuous at $a$
  with respect to $(X\setminus F) \cup \{ a \}$
  as in~\itemref{thm:fininf:item:strict},
  we obtain a~contradiction.
  First, we see that $(\bar f)'$ is actually continuous with respect to $X$
  (note that
  the derivative is
  continuous at $a
  =0
  $ with respect to $F$,
  since it equals $L$
  on~$F$ because every $x\in\theset \setminus\{a\}$ is an isolated
  point of $\theset$).
  Consequently,
  $\bar f$ is Lipschitz in $B_X(0,2r)$ for some $r>0$.
  Let $\pi$ be the
  radial
  projection onto $\closure {B_X(0,r)}$,
  $\pi(x) =  x $
  for $x\in B_X(0,r)$
  and
  $\pi(x) = r x / \left\| x \right\| $
  for $x\in X\setminus B_X(0,r)$.
  Then
        $\Lip \pi \le 2$, see e.g.\ \cite[Remark~4]{Maligranda},
        hence
  the mapping $g(x) = \bar f(\pi(x))$ is Lipschitz, and for $n$ sufficiently large,
  $g$ is a~$d_n$-Lipschitz extension of $f_n$, which is a~contradiction.
  Likewise, if $L(a)$ is the strict derivative of $\bar f$ at $a$
  (with respect to $X$)
      as in~\itemref{thm:fininf:item:strict}
  then
  $\bar f$ is Lipschitz in $B_X(0,2r)$ for some $r>0$
  and
      we obtain a~contradiction.
  The same example also provides a~contradiction with~\itemref{thm:fininf:item:lip-Loc-Glob}.
\end{remark}

\titleformat{\section}{\bfseries}{\appendixname~\thesection .}{0.5em}{}
\begin{appendices}
                                                             \def\warning{
                                                                         |
   WARNING: There is a faulty elsarticle command  \appendix,
   which we CANNOT use since the propositions then
   start by e.g. "Proposition Appendix A.1".
   JK                                                                    |
                                                                 \warning}
\section{Proof of Lemma~\ref{l:specPart}}\label{apen:partition}
We derive Lemma~\ref{l:specPart} from a~very similar statement that
is proven in \cite[pp.~245--247]{EG}
and summarized in \cite{KZ}.

\begin{lemma}[{cf.~\cite{KZ} and \cite{EG}}]\label{l:partKZ}
        Let $s\ge 1$.
        Then Lemma~\ref{l:specPart} holds true when \eqref{eq:rbezmin}
        is replaced by
\begin{equation}\label{eq:rwithMin}
  r(x) = \frac{1}{20} \min( s, \dist(x, F) )
  .
\end{equation}
\end{lemma}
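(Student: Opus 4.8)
The plan is to carry out the classical Whitney decomposition of the open set $\rn\setminus F$, but with the gauge $\dist(\cdot,F)$ replaced throughout by $g:=\min(s,\dist(\cdot,F))$. Everything in the construction rests on only three properties of $g$: it is $1$-Lipschitz, it is strictly positive on $\rn\setminus F$ and tends to $0$ at $\boundary F$, and $g\le\dist(\cdot,F)$; all three are inherited from $\dist(\cdot,F)$ since $\min(s,\cdot)$ preserves $1$-Lipschitzness. Concretely, I would let $\{Q_j\}$ be the family of maximal dyadic cubes $Q\subset\rn\setminus F$ satisfying $20\sqrt n\,\diam Q\le\inf_Q g$. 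By the dyadic nesting these cubes are pairwise disjoint, and since $g>0$ on $\rn\setminus F$ they cover $\rn\setminus F$. Comparing $Q_j$ with its parent (which fails the defining inequality — in particular because $\inf g=0$ on any dyadic cube meeting $F$) and using that $g$ is $1$-Lipschitz gives $c(n)\,g(x)\le\diam Q_j\le\frac{1}{20\sqrt n}\,g(x)$ for every $x\in Q_j$, so that $\ell(Q_j)\asymp g(x_j)\asymp r(x_j)$, where $x_j$ is the centre of $Q_j$ and $r:=\frac{1}{20}g$ as in \eqref{eq:rwithMin}. I would then take the concentric $\tfrac{9}{8}$-dilate $Q_j^*$ of $Q_j$ (so that $Q_j^*\subset B(x_j,10r(x_j))$, again by the defining inequality), choose standard mollified bumps $\phi_j^*\in C^\infty(\rn)$ with $0\le\phi_j^*\le1$, $\phi_j^*\equiv1$ on $Q_j$, $\spt\phi_j^*\subset Q_j^*$ and $|(\phi_j^*)'|\le C(n)/\ell(Q_j)$, and finally set $\phi_j:=\phi_j^*/w$ where $w:=\sum_k\phi_k^*$.

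The verification of \eqref{P1}--\eqref{P7} is then routine. Since adjacent Whitney cubes have comparable sidelengths, the family $\{Q_j^*\}$ has overlap bounded by a dimensional constant $N(n)$; hence $1\le w\le N(n)$ on $\rn\setminus F$, the defining sum for $w$ is locally finite, $w\in C^\infty(\rn\setminus F)$, and \eqref{P3}, \eqref{P4}, \eqref{P5} follow at once. Differentiating \eqref{P5} gives \eqref{P6}. For \eqref{P7}, the quotient rule together with $|w'|\le\sum_k|(\phi_k^*)'|\le C(n)/\ell(Q_j)$ on $\spt\phi_j$ and $\ell(Q_j)\asymp r(x)$ there yields $|\phi_j'(x)|\le C_2(n)/r(x)$. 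For \eqref{P2}, if $j\in\ixsetSx$ then $|x-x_j|<10r(x)+10r(x_j)$, so $|r(x)-r(x_j)|\le\frac{1}{20}|x-x_j|<\frac{1}{2}\bigl(r(x)+r(x_j)\bigr)$ by $1$-Lipschitzness of $g$, which rearranges to $\frac{1}{3}\le r(x_j)/r(x)\le3$ — literally the computation leading to \eqref{Pr4}. Finally, for \eqref{P1}: by \eqref{P2} each such $x_j$ lies in $B(x,2g(x))$ and is the centre of a cube $Q_j$ of sidelength $\ge c(n)g(x_j)\ge\frac{1}{3}c(n)g(x)$, and since these cubes are pairwise disjoint and all contained in a fixed dimensional multiple of $B(x,g(x))$, a volume count bounds their number by a constant $C_1(n)$. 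After re-indexing $\{Q_j\}$ by $\N$ (padding trivially if the family is finite or $\rn\setminus F=\emptyset$) and, if necessary, enlarging the constants so that $C_1,C_2>1$, this is exactly the statement of the lemma.

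I do not expect a real obstacle: the content is standard and is essentially that of \cite[pp.~245--247]{EG} and \cite[Step~1, p.~1031]{KZ}. The only point that deserves a comment is the single deviation — the truncation at level $s$ — and it costs nothing: in the region $\{\dist(\cdot,F)>s\}$ it merely produces a quasi-uniform grid of cubes of sidelength $\approx s$, which is glued to the usual Whitney cubes near $F$ through the transition zone $\{\dist(\cdot,F)\approx s\}$, where $g\approx s$ is still slowly varying, so the bounded-overlap and comparable-size estimates are unaffected. (The hypothesis $s\ge1$ is in fact never used in the proof and is kept only so that the statement matches the form in which the lemma will be applied.) An equally legitimate, and shorter, write-up would be to quote the constructions of \cite{EG} and \cite{KZ} verbatim after replacing $\dist(\cdot,F)$ by $\min(s,\dist(\cdot,F))$ and remarking that every estimate there uses only the three properties of $g$ noted above.
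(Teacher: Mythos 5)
Your proof is correct, but it takes a different route from the one in the paper. You rebuild the Whitney decomposition from scratch with the truncated gauge $g=\min(s,\dist(\cdot,F))$, observing that the whole construction uses only that $g$ is $1$-Lipschitz, positive on $\rn\setminus F$ with $g\to 0$ at $\boundary F$, and dominated by $\dist(\cdot,F)$; all the verifications you sketch (parent-cube comparison, bounded overlap, the volume count for \eqref{P1}, the $\frac13\le r(x)/r(x_j)\le 3$ computation for \eqref{P2}, and the quotient-rule bound for \eqref{P7}) are sound. The paper instead disposes of the lemma in two lines: the case $s=1$ is quoted verbatim from \cite[pp.~245--247]{EG} as summarized in \cite[Step~1, p.~1031]{KZ} (where the gauge is already $\frac{1}{20}\min(1,\dist(\cdot,F))$), and the case $s>1$ is reduced to it by rescaling, setting $F_*=\{x/s\setcolon x\in F\}$, $\phi_j^*(x)=\phi_j(x/s)$ and $x_j^*=s\,x_j$; one then checks that $r_{F_*}(x/s)=r_F(x)/s$, that the index sets $\ixsetSx$ correspond under the dilation, and that $|(\phi_j^*)'(x)|=\frac1s|\phi_j'(x/s)|\le C_2/r_F(x)$, so all of \eqref{P1}--\eqref{P7} carry over with the same constants. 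Your version buys a self-contained argument that makes transparent why the truncation at level $s$ is harmless (and, as you note, why $s\ge1$ is never actually needed); the paper's version buys brevity by reusing the cited construction unchanged, at the cost of the small scale-invariance check. Your closing remark --- that one could instead quote \cite{EG} and \cite{KZ} after replacing $\dist(\cdot,F)$ by $\min(s,\dist(\cdot,F))$ --- is essentially the paper's strategy, implemented via scaling rather than by re-reading the cited proofs with a new gauge.
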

\begin{proof}
        {\em Case $s=1$}.
        This case is exactly the one
        proven in \cite[pp.~245--247]{EG}
        and
        summarized in \cite[Step~1 on p.~1031]{KZ}.
        
        {\em Case $s > 1$}.
        The partition can be obtained from the previous case by scaling:
        \\
        Let $F_* = \{ x/s \setcolon x \in F \}$
        and let
        $
         \{
           x_{j}
         \}_{j\in\N}
        $,
        $
         \{
           \phi_{j}
         \}_{j\in \N}
        $
        be
        corresponding
        points and
        partition
        of unity from the previous case,
        that is,
        with
        the properties as in Lemma~\ref{l:partKZ}
        but
        with
        $s=1$
        and
        $F=F_*$.
For $j\in \N$ and $x\in \rn\setminus F$, let
 $\phi_{j}^*(x) = \phi_{j}(x/s)$
 and
 $x_j^* = s\,x_j$.
Then
partition of unity
$
 \{
 \phi_{j}^*
 \}_{j\in \N}
$
on $\rn\setminus F$
and points
$
 \{
 x_{j}^*
 \}_{j\in \N}
$
have all the required properties.
\end{proof}

\begin{proof}[Proof of Lemma~\ref{l:specPart}]
  We combine the partitions in such a~way that each of them is used in a~range of distances from $F$
  (with overlaps). We do that by multiplying each of them by a~function $v_{6^m}$ which is
  a~member of a~partition of unity that roughly depends only on distance from $F$.
  We can obtain
  $\{  v_{6^m} \}_{m\in \N}$
  either directly using the partitions at hand (as we do)
  or using the so called regularized distance.

  For $s\ge 1$, let $r_s$, $\{x_{s,j}\}_{j\in \N}$ and $\{\phi_{s,j}\}_{j\in \N}$ denote the
  function from
  \eqref{eq:rwithMin}
  corresponding to $s$,
  and the points and partition of unity
  from Lemma~\ref{l:partKZ}.
  Thus $r_s(x)=(1/20) \min(s, \dist(x,F))$. 
  Let also $r(x)=(1/20) \dist(x,F)$. 
  Note that $0\le \phi_{s,j} \le 1$ for every $s\ge 1$ and $j\in \N$.

For $d>0$, denote
$H_d = \{ x \setcolon \dist(x,F) \le d \}$.
Then, for every $x\in \rn \setminus F$,
\begin{equation}\label{eq:ballmezi}
        B(x,
                        10
        r(x))
        \subset
        H_{(3/2)\dist(x,F)}\setminus H_{(1/2)\dist(x,F)}
        =
        H_{30r(x)}\setminus H_{10r(x)}
        .
\end{equation}
Moreover, if $y\in B(x,
                        10
        r(x))$ then
\begin{equation}\label{eq:ballmezi2}
        B(y,
                        10
        r(y))
        \subset H_{(3/2)\dist(y,F)}\setminus H_{(1/2)\dist(y,F)}
        \subset H_{(9/4)\dist(x,F)}\setminus H_{(1/4)\dist(x,F)}
        =
        H_{45r(x)}\setminus H_{5r(x)}
        .
\end{equation}
Let
$J_s = \{ j \in \N \setcolon
                               x_{s,j}
                               \in H_{s/18}
                               \}
$
and $u_s=\sum_{ j \in J_s } \phi_{s,j}$ for all $s\ge 1$.
Also, let
\begin{align*}
        G_s &= \bigcup \left\{   B(x_{s,j}, 10r(x_{s,j})) \setcolon j \in J_s \right\}
         \subset
                H_{s/12}
        ,
        \\
        O_s &= \bigcup \left\{   B(x_{s,j}, 10r(x_{s,j})) \setcolon j \in \N \setminus J_s \right\}
         \subset
                \rn \setminus H_{s/36}
        .
\end{align*}
By \eqref{P4} and \eqref{eq:rwithMin}, we have
$\spt u_s
         \subset
         G_s
$
and
$\spt (1 - u_s) \subset
         O_s
$.
        In particular,
        $\spt u_{s/6} \subset H_{s/72} \subset H_{s/36}$
        does not intersect $\spt ( 1 - u_s)$ and
\begin{equation}\label{eq:uPrenasobene}
        u_s\, u_{s/6} = u_{s/6}
        \qquad
        \text{on }\rn\setminus F
        .
\end{equation}
%
Set (redefine) $u_1=0$ and
for $s\ge 6$, let
  \begin{figure}[bt]
  \begin{center}
\def\vyska{1.3}
\def\odstup{1}
\def\vyskaB{0.3}
\def\odstupB{-\odstup-\vyska-1}
\def\baselineB{ -\vyskaB \odstupB }
\def\krokB {-0.2}
\def\DRAWV #1,#2,#3;{
         \draw (2,-\vyska-\odstup) -- ++ (   #1   ,0) .. controls +(0.6,0) and +(-0.6,0) ..  ++(1,\vyska) \DRAWVrest#2,#3;
 }
\def\DRAWVrest#1,#2;{   -- node[below]{$v_{6^{  #2 }}$}    ( 2 + #1,-\odstup)
                               .. controls +(0.6,0) and +(-0.6,0) ..  ++(1,-\vyska) -- (16,-\vyska-\odstup);
 }
\noindent
\begin{tikzpicture}
 \draw (1,0) -- (16,0);
 \draw (2.5, 0) node[above] {$u_1$};
 \draw[thick,fill=gray] (1,0) -- ++(1,0) -- ++(0,\vyska) -- node[above]{$F$} +(-1,0) -- cycle;
 \draw (2,\vyska) -- ++ (    1   ,0 ) .. controls +(0.6,0) and +(-0.6,0) .. node[below left, very near start]{$u_{6^  {} }$}  ++(1,-\vyska) -- (16,0);
 \draw (2,\vyska) -- ++ (    4   ,0 ) .. controls +(0.6,0) and +(-0.6,0) .. node[below left, very near start]{$u_{6^  2  }$}  ++(1,-\vyska) -- (16,0);
 \draw (2,\vyska) -- ++ (    8   ,0 ) .. controls +(0.6,0) and +(-0.6,0) .. node[below left, very near start]{$u_{6^  3  }$}  ++(1,-\vyska) -- (16,0);
 \draw (2,\vyska) -- ++ (   13   ,0 ) .. controls +(0.6,0) and +(-0.6,0) .. node[below left, very near start]{$u_{6^  4  }$}  ++(1,-\vyska) -- (16,0);
 \draw[thick,fill=gray] (1,-\vyska-\odstup) -- ++(1,0) -- ++(0,\vyska) -- node[above]{$F$} +(-1,0) -- cycle;
 \draw (2,-\odstup) \DRAWVrest 1,{};
 \DRAWV 1,4,{2};
 \DRAWV 4,8,{3};
 \DRAWV 8,13,{4};
  \draw[thick,fill=gray] (1,\baselineB) -- ++(1,0) -- ++(0,\vyskaB) -- node[above]{$F$} +(-1,0) -- cycle;
 \draw (1,\baselineB) -- +(2,0) node[right] {$H_{1/6}$} ;
 \draw (1,\baselineB) ++(0,\krokB)  -- +(5,0) node[right] {$H_{1}$} ;
\end{tikzpicture}
    \caption{Schematic depiction of $u_s$, $v_s$ and $H_s$ (not to scale).
    Published with permission of \copyright\ Jan Kol\'a\v{r} 2016. All Rights Reserved.\relax
    }
    \label{fig:partition}
  \end{center}
  \end{figure}
\[
   v_s(x) = u_s(x)\, (1-u_{s/6}(x)),
   \qquad x\in \rn\setminus F
   .
\]
Then
        $
        \spt v_s \subset G_s \cap O_{s/6}
        \subset H_{s/12}  \setminus H_{s/216}
        $
     for all $ s > 6 $
     and
        $
        \spt v_s \subset G_s 
        \subset H_{s/12} 
        $
     for $ s = 6 $.
   Also, $\sum_{m\in \N} v_{6^m}(x) = 1$ for every $x\in \rn \setminus F$
   (see Figure~\ref{fig:partition}).
   Indeed,
   on $\rn \setminus F$,
   \[
   1
   =
   \sum_{m\in \N} u_{6^m} - u_{6^{m-1}} 
   \overset
           {
           \eqref{eq:uPrenasobene}
           }
           {=}
   \sum_{m\in \N} u_{6^m} - u_{6^m}\, u_{6^{m-1}} 
   =
   \sum_{m\in \N} u_{6^m} \, ( 1 -  u_{6^{m-1}} )
   =
   \sum_{m\in \N} v_{6^m}
   .
   \]

  Let
  \begin{align*}
        M_s
        &=
        \{ j\in \N \setcolon
                             x_{s,j} \in  H_{s/6} \setminus H_{s/324}   \}
        \overset
                   {
                   \eqref{eq:ballmezi}
                   }
        \supset
        \{ j\in \N \setcolon
                             B(x_{s,j}, 10 r(x_{s,j})) \cap  (  H_{s/12} \setminus H_{s/216} ) \neq \emptyset   \}
           &
           \qquad
           &
           \text{for }s > 6,
   \\
        M_s
        &= 
        \{ j\in \N \setcolon
                             x_{s,j} \in  H_{s/6}   \}
        \overset
                   {
                   \eqref{eq:ballmezi}
                   }
        \supset
        \{ j\in \N \setcolon
                             B(x_{s,j}, 10 r(x_{s,j})) \cap     H_{s/12}  \neq \emptyset   \}
           &
           \qquad
           &
           \text{for }s = 6,
  \end{align*}
  and
  \[
        \ixsetSxs
         =
          \{
            j \in M_s \setcolon B(x,10r(x))\cap B(x_{s,j},10r(x_{s,j}))\neq\emptyset\}
        .
  \]
  If $\ixsetSxs \neq \emptyset$ then $x \in  H_{s/2} \setminus H_{s/972}$
  if $s>6$ and $x\in H_{s/2}$ if $s=6$.
  Thus, for a~fixed $x\in \rn\setminus F$, there are at most four different
  $s=6^m$
  such that
  $m\in \N $
  and
  $\ixsetSxsixm \neq\emptyset$.
  Moreover,
  $r(x)=r_s(x)$ for all $x\in H_s$,
  in particular
  $r( x_{s,j} ) = r_s( x_{s,j} )$
  for all $j\in M_s$,
  and
  $r( x ) = r_s( x )$
  whenever $\ixsetSxs\neq\emptyset$.
  Then also
  $\card \ixsetSxs \le C_1$
                by~\eqref{P1}.

  Consider
  \[
    \left\{ x_{6^m,j} \right\}
      _
       { m\in \N ,\ j \in M_{6^m} }
    \qquad
      \text{ and }
    \qquad
    \left\{ v_{6^m} (x)\, \phi_{6^m,j} (x) \right\}
      _
       { m\in \N ,\ j \in M_{6^m} }
       .
  \]
  Note that the condition $j \in M_{6^m}$ (compared to $j\in \N$) removes only (some of) the elements
  where $  v_{6^m} (x)\, \phi_{6^m,j} (x) $ is the zero function.
  Therefore
  $
  \sum _ {  m\in \N }
  \sum
       _ {  j \in M_{6^m} }  v_{6^m} (x)\, \phi_{6^m,j} (x)
  =
  \sum _ {  m\in \N  } \sum _ {j \in \N }  v_{6^m} (x)\, \phi_{6^m,j} (x)
  =
  \sum _ {  m\in \N  }  v_{6^m} (x)
  = 1
  $
  for $x\in \rn\setminus F$.
  Obviously,
  $\spt  v_{6^m} \, \phi_{6^m,j} \subset \spt  \phi_{6^m,j}  \subset B(x_{6^m,j}, 10 r_s(x_{6^m,j})) = B( x_{6^m,j}, 10 r(x_{6^m,j}))$
  whenever $j \in M_{6^m}$.

Let $\eta$ be a
bijection $\eta\fcolon \N \to \{ (6^m, j) \setcolon m\in \N, \ j \in M_{6^m} \} $.
Let
$ \phi_k^\# (x) = v_s (x) \, \phi_{s,j} (x)$
and
$ x_k^\# (x) = x_{s,j} $
for
$x\in \rn\setminus F$,
where $(s,j)=\eta(k)$.

We claim that
$\{ \phi_k^\# \}_{k\in \N}$
is a~partition of unity
in $\rn\setminus F$
with the required properties
but with $C_1$, $C_2$ replaced by
$C_1^*:= 4 C_1$,
$C_2^* := 3 C_1 C_2$.
        To show that,
        fix $x\in \rn\setminus F$.
        Since $r_s(x) \le r(x)$ for every $s\ge 1$,
        we have
        \[
        \ixsetSx \subset \{ k \in \N \setcolon j \in \ixsetSxs \text{ where } (s,j) = \eta(k) \}
        .
        \]
        Recall that
        there are at most four different $s=6^m$ ($m\in \N$) such that $\ixsetSxs\neq \emptyset$.
        And, for each such $s$, $\card \ixsetSxs \le C_1$. Hence $\card \ixsetSx  \le 4 C_1$.
        Furthermore,
        for every $k \in \N$
        and
        $(s,j) =\eta(k)$,
\begin{equation*}
        \left| (\phi_k^\#)' (x) \right|
 \le
             \left|
                   \phi'_{s,j}(x)
             \right|
             +
             \left|
                   u'_{s} (x)
             \right|
             +
             \left|
                    u'_{s/6} (x)
             \right|
     \le
      C_2/r(x) + 2C_1C_2/r(x)
     \le
      3 C_1 C_2 / r(x)
                   .
                   \qedhere
\end{equation*}
\end{proof}
\elistopadxvH

\section{Extensions  from special closed sets  $F\subset \rn$}
\label{apen:KZ}
\noindent
Forthcoming paper \cite{KolarClanekIII}
contains generalizations of
results of
\cite[Section 4]{KZ}
that avoid the assumption of $L$ being Baire one or continuous
and replace them by requirements on the set $\theset$.
We give here some of the results with concise and self-contained proofs.
For a stronger theorem with
lengthy proof
and a number of other corollaries, see \cite{KolarClanekIII}.

        Recall from \cite{KZ} that,
        for $\theset\subset \rn$ and $x\in \rn$,
\begin{equation}\label{eq:TanPtg}
        \Tan(\theset,x) \subset \Ptg(\theset,x)
        ,
\end{equation}
        where
        \begin{align*}
                \Tan(\theset,x) &= \{ v \in \rn \setcolon \text{there
                are
                      $x_k \in \theset$
                and $\alpha_k\in [0,\infty)$
                (for all $k\in \N$)
                such that
                $x_k\to x$
                and
                $\alpha_k\,(x_k-x) \to v$} \}
                \\
                \noalign{\noindent and}
                \Ptg(\theset,x) &= \{ v \in \rn \setcolon \text{there
                are
                      $x_k, y_k \in \theset$
                and $\alpha_k\in \R$
                (for all $k\in \N$)
                such that
                $x_k\to x$, $y_k\to x$
                and
                $\alpha_k\,(y_k-x_k) \to v$} \}
                .
        \end{align*}
                The sets
                $\Tan(\theset,x)$ and $\Ptg(\theset,x)$ are called the {\em contingent cone}
                (sometimes also the {\em tangent cone})
                of $\theset$ at $x$
                and the {\em paratingent cone}
                of $\theset$ at $x$,
                respectively.
                Let $\der \theset$ denote the set of accumulation points of $\theset$.

                We need the following generalization of \cite[Lemma~4.9]{KZ}
                to vector-valued functions:
        \begin{lemma}
                \label{l:KZ49}
                Let $Y$ be a~normed linear space.
                Let $\theset\subset \rn$,
                $x\in \theset \cap \der \theset$,
                $f\fcolon \theset\to Y$
                and let $L\in \mathcal L(\rn, Y)$ be a~strict derivative of $f$ at $x$.
                Let
                $v_1,\dots,v_n\in \Ptg(\theset,x)$ be unit vectors and
                $|\det(v_1,\dots,v_n)|>d>0$. Then
        \begin{equation}\label{eq:limsupKZ49}
                \left\|L\right\|_{\mathcal L(\rn, Y)}
                \le
                \limsup\limits_{\substack{y\to x,\ z\to x,\\ y,z\in \theset,\ y\neq z}}
                 \frac{n}{d}
                \frac{\left\|f(y) - f(z)\right\|_Y}{\left|y-z\right|}
                .
        \end{equation}
        \end{lemma}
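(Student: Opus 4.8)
The plan is to combine three ingredients. First, the strict-differentiability hypothesis lets us, near $x$, replace difference quotients of $f$ by those of the linear map $L$. Second, membership of each $v_i$ in the paratingent cone $\Ptg(F,x)$ lets us read off an upper bound for $\left\|L v_i\right\|_Y$ from those difference quotients, evaluated along the special pairs of points that witness $v_i\in\Ptg(F,x)$. Third, a purely linear-algebraic step (Cramer's rule together with Hadamard's inequality) controls $\left\|L\right\|_{\mathcal L(\rn,Y)}$ in terms of the $n$ numbers $\left\|L v_i\right\|_Y$ and the determinant lower bound $d$, producing exactly the constant $n/d$.

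Concretely, I would write $S$ for the right-hand $\limsup$ without the factor $n/d$; if $S=+\infty$ there is nothing to prove, since $L$ is bounded, so I may assume $S<\infty$ and fix $\eps>0$. Note that, since $x\in\der F$, there are pairs $y\neq z$ in $F$ arbitrarily close to $x$, so $S$ is a genuine $\limsup$ over a nonempty net. By the definition of a relative strict derivative (Definition~\ref{D:Rel_der}) there is $\delta>0$ with $\left\|f(y)-f(z)-L(y-z)\right\|_Y\le\eps\left|y-z\right|$ for all $y,z\in F$, $y\ne z$, $\left|y-x\right|<\delta$, $\left|z-x\right|<\delta$; after shrinking $\delta$ I may also assume $\left\|f(y)-f(z)\right\|_Y\le(S+\eps)\left|y-z\right|$ for such $y,z$. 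Adding these gives $\left\|L(y-z)\right\|_Y\le(S+2\eps)\left|y-z\right|$ for all $y,z\in F\cap B(x,\delta)$ with $y\ne z$. Now fix $i$; since $v_i\in\Ptg(F,x)$ there are $y_k,z_k\in F$ and $\alpha_k\in\R$ with $y_k\to x$, $z_k\to x$ and $\alpha_k(y_k-z_k)\to v_i$. As $v_i\ne0$, for large $k$ we have $y_k\ne z_k$ and $y_k,z_k\in B(x,\delta)$, so $\left|\alpha_k\right|\left\|L(y_k-z_k)\right\|_Y\le(S+2\eps)\left|\alpha_k\right|\left|y_k-z_k\right|$; letting $k\to\infty$ and using continuity of $L$ and of the norm yields $\left\|L v_i\right\|_Y\le(S+2\eps)\left|v_i\right|=S+2\eps$. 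Since $\eps$ was arbitrary, $\left\|L v_i\right\|_Y\le S$ for $i=1,\dots,n$.

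Finally, for a unit vector $u\in\rn$, the condition $\left|\det(v_1,\dots,v_n)\right|>d$ makes $(v_1,\dots,v_n)$ a basis, and Cramer's rule writes $u=\sum_{i=1}^n c_i v_i$ with $c_i=\det(v_1,\dots,v_{i-1},u,v_{i+1},\dots,v_n)/\det(v_1,\dots,v_n)$; by Hadamard's inequality the numerator has absolute value at most $\left|u\right|\prod_{j\ne i}\left|v_j\right|=1$, so $\left|c_i\right|\le1/d$. Hence $\left\|Lu\right\|_Y\le\sum_{i=1}^n\left|c_i\right|\left\|L v_i\right\|_Y\le(n/d)S$, and taking the supremum over unit vectors $u$ gives \eqref{eq:limsupKZ49}. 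I do not expect a serious obstacle; the two points deserving care are that the strict-derivative definition allows \emph{both} endpoints of the difference quotient to move (which is exactly what is needed to apply it to the pairs $y_k,z_k$ coming from the paratingent cone, rather than only to pairs with one endpoint at $x$), and that Hadamard's inequality delivers precisely the constant $1$ in the numerator of $c_i$, so that summing $n$ terms produces the stated factor $n/d$ rather than something larger.
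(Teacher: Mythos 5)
Your proof is correct, but it takes a genuinely different route from the paper. The paper's proof of Lemma~\ref{l:KZ49} is a short reduction to the scalar case: assuming $\left\|L\right\|_{\mathcal L(\rn,Y)}>\eps>0$, it chooses (via Hahn--Banach) a functional $\phi\in Y^*$ with $\left\|\phi\right\|_{Y^*}=1$ and $\left\|\phi L\right\|_{\mathcal L(\rn,\R)}>\eps$, applies the real-valued version \cite[Lemma~4.9]{KZ} to $f_0=\phi\circ f$, and uses $\left|f_0(y)-f_0(z)\right|\le\left\|f(y)-f(z)\right\|_Y$ to transfer the resulting lower bound on the limsup back to $f$. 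You instead argue directly in the vector-valued setting, and the three ingredients you assemble --- adding the strict-derivative estimate to the limsup bound to get $\left\|L(y-z)\right\|_Y\le(S+2\eps)\left|y-z\right|$ near $x$, passing to the limit along paratingent sequences to get $\left\|Lv_i\right\|_Y\le S$, and the Cramer--Hadamard step producing the constant $n/d$ --- are precisely the content the paper outsources to the cited scalar lemma. Your version buys self-containedness and avoids Hahn--Banach; the paper's buys brevity at the cost of an external reference. The one point worth an explicit sentence in a write-up is that your use of $\left\|f(y)-f(z)\right\|_Y\le(S+\eps)\left|y-z\right|$ on the witnessing pairs $(y_k,z_k)$ reads the limsup in \eqref{eq:limsupKZ49} as ranging over all pairs of distinct points of $\theset$ near $x$, including pairs with one endpoint equal to $x$ (such pairs can occur in the definition of $\Ptg(\theset,x)$, e.g.\ when $v_i\in\Tan(\theset,x)$); this is the reading consistent with the parenthetical in \eqref{eq:strict-def}, so no harm is done, but it should be stated.
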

        \begin{proof}
                For the case $Y=\R$, see \cite[Lemma 4.9]{KZ}
                where 
                \eqref{eq:limsupKZ49} is presented in the form
        \[
                \limsup\limits_{\substack{y\to x,\ z\to x,\\ y,z\in \theset,\ y\neq z}}
                \frac{\left\|f(y) - f(z)\right\|_Y}{\left|y-z\right|}
                > \frac{d\varepsilon}{n}
        \]
                whenever
                $\left\|L\right\|_{\mathcal L(\rn, Y)} > \varepsilon > 0$.
                We only need to treat the vector case.
                If $\left\|L\right\|_{\mathcal L(\rn, Y)}> \varepsilon > 0$ then there obviously exists $\phi \in Y^*$ such that
                $\left\| \phi \right\|_{Y^*} = 1$ and
                $\left\| \phi L \right \|_{\mathcal L(\rn, \R)} > \varepsilon$.
                Now it is enough to apply \cite[Lemma 4.9]{KZ} to the real function $f_0(y)=\phi(f(y))$, $y\in \theset$.
        \end{proof}

        From Lemma~\ref{l:KZ49}, the following statement inspired by
        \cite[Corollary~2]{ALP}
        (see also \cite[Lemma~U]{KZ})
        immediately follows:
        \begin{lemma}\label{l:U2}
                Let $Y$ be a~normed linear space,
                $\theset \subset \rn$
                and
                $x\in \theset \cap \der \theset$.
                If
                $f\fcolon \theset\to Y$ is
                relatively
                strictly differentiable at $x$
                and $\Ptg(\theset,x)$ spans $\rn$, then the
                relative
                strict derivative of $f$ at $x$ is determined uniquely.
        \end{lemma}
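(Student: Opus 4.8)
The plan is to derive uniqueness directly from Lemma~\ref{l:KZ49}, applied to the zero function. First I would take two operators $L_1,L_2\in\mathcal L(\rn,Y)$ that are both relative strict derivatives of $f$ at $x$, and subtract the two limits of the form \eqref{eq:strict-def}. Using the triangle inequality on the quotient, this yields
\[
   \lim\limits_{\substack{y\to x,\ z\to x\\ y,z\in\theset,\ y\neq z}}
   \frac{\left\|(L_1-L_2)(y-z)\right\|_Y}{\left|y-z\right|}=0,
\]
which says precisely that the operator $L:=L_1-L_2$ is a relative strict derivative at $x$ of the zero function $0\fcolon\theset\to Y$.

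Next, since $\Ptg(\theset,x)$ is a cone (it is closed under real scaling, directly from its definition) and spans $\rn$, it contains $n$ linearly independent vectors; normalizing them gives unit vectors $v_1,\dots,v_n\in\Ptg(\theset,x)$ with $|\det(v_1,\dots,v_n)|>0$, so I can fix $d>0$ with $|\det(v_1,\dots,v_n)|>d$. Because $x\in\theset\cap\der\theset$ by hypothesis, Lemma~\ref{l:KZ49}, applied with $f$ replaced by the zero function and with strict derivative $L$, gives
\[
   \left\|L\right\|_{\mathcal L(\rn,Y)}
   \le
   \frac{n}{d}\,
   \limsup\limits_{\substack{y\to x,\ z\to x\\ y,z\in\theset,\ y\neq z}}
   \frac{\left\|0-0\right\|_Y}{\left|y-z\right|}
   =0,
\]
hence $L=0$, that is $L_1=L_2$, which is the asserted uniqueness.

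I do not expect any genuinely difficult step here; the only two points worth verifying are that the difference of two relative strict derivatives is again a relative strict derivative (here, of the zero function) — immediate from the triangle inequality applied to the difference quotient in \eqref{eq:strict-def} — and that a cone spanning $\rn$ contains $n$ linearly independent unit vectors, which is elementary linear algebra. Thus the only real work is arranging the hypotheses of Lemma~\ref{l:KZ49} correctly, after which its conclusion forces $\left\|L\right\|_{\mathcal L(\rn,Y)}=0$.
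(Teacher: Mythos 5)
Your proof is correct and follows essentially the same route as the paper: both arguments reduce to Lemma~\ref{l:KZ49} applied to an auxiliary function whose difference quotients vanish, forcing the difference of the two candidate derivatives to have norm zero. The only (cosmetic) difference is that you apply the lemma to the zero function with derivative $L_1-L_2$ and conclude directly, whereas the paper applies it to $f_0=f-L_1$ with derivative $L_0=L_2-L_1$ and argues by contradiction.
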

        \begin{proof}
                If $\Ptg(\theset,x)$ spans $\rn$ then $\Span \{v_1,\dots,v_n\} = \rn$ for some vectors $v_1,\dots,v_n\in \Ptg(\theset,x)$.
                We can assume that $v_1,\dots,v_n$ are unit vectors.
                Choose any $0 < d< |\det(v_1,\dots,v_n)|$. Assume that $f\fcolon \theset\to Y$
                is strictly differentiable at~$x$ and $L_1$, $L_2$ are two distinct strict derivatives of $f$ at $x$.
                Let $f_0(y) = f(y) - L_1(y)$ for $y\in \theset$ and $L_0 = L_2 - L_1$.
                Then $L_0\neq 0$ is a~strict derivative of $f_0$ at $x$.
                By Lemma~\ref{l:KZ49}, \eqref{eq:limsupKZ49} holds true
                for $f_0$ and $L_0$ with $\|L_0\|_{\mathcal L(\rn, Y)} > 0$.
                However,
                this
                contradicts
                the fact
                that $0$ is also a~strict derivative of $f_0$ at $x$.
        \end{proof}
        Now, we can formulate another corollary to Theorem~\ref{thm:fininf}.
        The following result is a~generalization of \cite[Proposition~4.10]{KZ}
          (we allow vector-valued mappings and also replace $\Tan(\theset,x)$ by a larger set $\Ptg(\theset,x)$).
        \begin{proposition}\label{prop:KZ410}
                Let $\theset\subset \rn$ be a~nonempty closed set
                such that\/
                $\Ptg(\theset,x)$
                spans $\rn$ for every $x\in \der \theset$.\footnote{Of course, $\Ptg(\theset,x)$ can be replaced by $\Tan(\theset,x)$, which is
                smaller
                but probably better known.}
                Let $Y$ be a~normed linear space
                and $f\fcolon \theset\to Y$ a~function (relatively) strictly differentiable at every $x\in
                \der \theset
                $.
                Then there exists a~differentiable extension of $f$ defined on $\rn$.
        \end{proposition}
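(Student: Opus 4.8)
The plan is to manufacture a Baire one candidate derivative and feed it to Theorem~\ref{thm:fininf}. First I would define $L\fcolon F\to\mathcal L(\rn,Y)$ as follows. For $x\in\der F$ the function $f$ is relatively strictly differentiable at $x$ by hypothesis, and its relative strict derivative at $x$ is \emph{unique} by Lemma~\ref{l:U2} (this is where the assumption that $\Ptg(F,x)$ spans $\rn$ enters); let $L(x)$ be that unique operator. For $x\in F\setminus\der F$, i.e.\ $x$ an isolated point of $F$, set $L(x)=0$; by Definition~\ref{D:Rel_der}(i) this $0$ is (trivially) a relative Fr\'echet, even strict, derivative of $f$ at $x$. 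Since a relative strict derivative is also a relative Fr\'echet derivative, $L(a)$ is a relative Fr\'echet derivative of $f$ at $a$ for \emph{every} $a\in F$.

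\textbf{The core step: $L$ is Baire one on $F$.}
For $k\in\N$ put
\[
  G_k=\{\,x\in\der F\setcolon\ \text{there are unit vectors }v_1,\dots,v_n\in\Ptg(F,x)\text{ with }|\det(v_1,\dots,v_n)|\ge 1/k\,\}.
\]
Since $\Ptg(F,x)$ is a cone that spans $\rn$ for every $x\in\der F$, normalizing a basis extracted from a spanning subset shows $\der F=\bigcup_k G_k$. Each $G_k$ is closed: the multifunction $x\mapsto\Ptg(F,x)$ has a closed graph (a routine diagonal argument over the defining sequences), $(S^{n-1})^n$ is compact, and $\der F$ is closed in $\rn$, so a subsequential limit of witnessing frames along $x_j\to x$ in $G_k$ is again a witnessing frame at $x$. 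Next I would show $L|_{G_k}$ is continuous. Fix $p\in G_k$ and, for $x\in\rn$, let $\omega_p(x)=\limsup_{y,z\to x,\ y,z\in F,\ y\ne z}\|f(y)-f(z)-L(p)(y-z)\|_Y/|y-z|$. Unwinding Definition~\ref{D:Rel_der}(ii) (the fact that $L(p)$ is the \emph{strict} derivative of $f$ at $p$), one gets $\omega_p(x)\to 0$ as $x\to p$. For $x\in G_k$ the operator $L(x)-L(p)$ is a relative strict derivative at $x$ of $f_0\fcolon F\to Y$, $f_0(z)=f(z)-L(p)(z)$, so Lemma~\ref{l:KZ49} applied to $f_0$, $L_0:=L(x)-L(p)$, the witnessing frame at $x$, and $d:=1/(k+1)$ yields $\|L(x)-L(p)\|_{\mathcal L(\rn,Y)}\le n(k+1)\,\omega_p(x)$; letting $x\to p$ within $G_k$ gives continuity of $L|_{G_k}$ at $p$. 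Finally, the set $I$ of isolated points of $F$ is open in $F$, hence $F_\sigma$ in $F$, and $L\equiv 0$ (so is continuous) on $I$. Thus $F$ is a countable union of relatively closed sets on each of which $L$ is continuous; extending each such restriction to a continuous $F\to\mathcal L(\rn,Y)$ by the Dugundji extension theorem and passing to an increasing cover by those closed pieces exhibits $L$ as a pointwise limit of continuous functions, i.e.\ $L$ is Baire one on~$F$.

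\textbf{Conclusion and the main obstacle.}
With $L$ in hand I would apply Theorem~\ref{thm:fininf} to $F$, $Y$, $f$, $L$, obtaining $\bar f\fcolon\rn\to Y$. Then $\bar f=f$ on $F$ by \itemref{thm:fininf:item:ext}, $\bar f|_{\rn\setminus F}\in C^\infty(\rn\setminus F,Y)$ by \itemref{thm:fininf:item:infsmoothcomp}, and for every $a\in F$, since $L(a)$ is a relative Fr\'echet derivative of $f$ at $a$, \itemref{thm:fininf:item:frechet} gives that $\bar f$ is Fr\'echet differentiable at $a$ with $(\bar f)'(a)=L(a)$. Hence $\bar f$ is a differentiable extension of $f$ to all of~$\rn$. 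The only real work is the Baire one verification, and within it the crux is the \emph{uniform} bound on $\|L(x)-L(p)\|$ on each stratum $G_k$; the quantitative input there is precisely Lemma~\ref{l:KZ49}, while the vanishing $\omega_p(x)\to 0$ is a direct consequence of strict — rather than merely Fr\'echet — differentiability of $f$, which is exactly why the weaker spanning condition on $\Ptg(F,\cdot)$ is enough here.
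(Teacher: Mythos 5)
Your proposal is correct and follows essentially the same route as the paper's proof: uniqueness of the strict derivative on $\der F$ via Lemma~\ref{l:U2}, stratification of $\der F$ into closed sets by a lower bound on $|\det(v_1,\dots,v_n)|$ for frames in $\Ptg(F,\cdot)$, continuity of $L$ on each stratum via Lemma~\ref{l:KZ49}, an increasing closed cover plus Dugundji extensions to exhibit $L$ as a pointwise limit of continuous maps, and finally Theorem~\ref{thm:fininf}. The only (harmless) deviation is in absorbing the isolated points: you use the $\Fsigma$ decomposition of the relatively open set of isolated points, while the paper adjoins an increasing sequence of finite sets exhausting $F\setminus\der F$.
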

        \begin{proof}
                Let
                $L\fcolon \theset \to \mathcal L(\rn, Y)$ be a~(relative) strict derivative of~$f$ on~$\theset$.
                Note that $L$ is uniquely determined on $\der \theset$ by
                Lemma~\ref{l:U2}.
                To finish the proof
                with the help of Theorem~\ref{thm:fininf},
                we only need
                to prove that $L$ is a~Baire one function on~$\theset$.
                We follow \cite{KZ} by letting
                \[
                        \theset_m = \left\{
                                         x \in \der \theset
                                       \setcolon
                                         \sup 
                                            \left\{
                                                  \det(v_1,\dots,v_n)
                                                 \setcolon
                                                  v_1,\dots,v_n \text{ unit vectors from }\Ptg(\theset,x)
                                            \right\}
                                          \ge \frac{1}{m}
                              \right\}
                        \qquad
                        \text{ for } m\in \N
                        .
                \]
                By the proof of \cite[Proposition~4.10]{KZ}, $\theset_m$ is closed for every $m\in \N$ and
                $\bigcup_{m\in \N} \theset_m = \der \theset$ (the fact that $\Ptg(\theset,x)$ spans $\rn$ for every $x\in \der \theset$ is used).
                Authors of \cite{KZ} also prove that $L$ is continuous on $\theset_m$ for every $m\in \N$, and the same proof can be applied unchanged
                to vector-valued functions
                $f$ and
                $L$, with
                the
                help of Lemma~\ref{l:KZ49} instead of \cite[Lemma~4.9]{KZ}.

%
%

                Now, it
                is easy to deduce
                that $L$ is $\Fsigma$-measurable on $\theset$ and \cite{KZ} deduce
                that $L$ is Baire one. For this step with vector-valued functions, we need
                to refer to \cite[Corollary 4.13]{Kar2015}.
                Alternatively, we
                construct a sequence of continuous functions $\{L_m\}_{m\in\N}$ that converges point-wise to $L$.
                Observe that $\theset_{m+1} \supset \theset_m$ for every $m\in \N$ and
                let $H_n$ ($n\in \N$) be an increasing sequence of finite sets such that
                $\theset\setminus \der \theset \subset \bigcup_{m\in \N} H_m \subset \theset$.
                By Dugundji's extension theorem
                or our
                Theorem~\ref{thm:difext}\itemref{thm:difext:item:cont}\itemref{thm:difext:item:contcomp},\footnote{
                        We use
                        Theorem~\ref{thm:difext}
                        with $f_{\text{Theorem\,\ref{thm:difext}}}:=L|_{F_m\cup H_m}$,
                        $L_{\text{Theorem\,\ref{thm:difext}}}:=0$,
                        $Y_{\text{Theorem\,\ref{thm:difext}}} := \mathcal L(\rn,Y)$
                        and then we let $L_m = ( \bar f_{\text{Theorem~\ref{thm:difext}}} )|_ F$.}
                we let $L_m \fcolon \theset \to \mathcal L(\rn,Y)$ 
                be a~continuous extension
                of the (continuous) function $L|_{\theset_m \cup H_m}$.
                Obviously, $L$ is the point-wise limit of $L_m$.
        \end{proof}

                Once we assume strict differentiability, is it natural to ask about stronger conclusions
                with strict differentiability or $C^1$ smoothness.
                Of course, we need an additional assumption
                that enforces continuity of the strict derivative.
        \begin{proposition}\label{prop:KZstrictstrict}
               Under the assumptions of
               Proposition~\ref{prop:KZ410},
               there exists a~differentiable extension $\bar f\fcolon \rn \to Y$
               of $f$
               such that
                $\bar f$ is
                strictly differentiable
                at $x$
                (with respect to~$\rn$)
                and the derivative of $\bar f$ is continuous
                at $x$
                (with respect to~$\rn$)
                for
                all
        \myParBeforeItems
        \begin{enumerate}[(a)]
                \item
                \label{item:NaDoplnku}
                $x\in \rn\setminus \der \theset$ and
                \item
                \label{item:strictlyconti}
                $x\in \der \theset$ where the (unique)
                relative
                strict
                derivative of $f$
                (with respect to
                $\theset$)
                is continuous
                with respect to
                $\der \theset$.
        \end{enumerate}
        \end{proposition}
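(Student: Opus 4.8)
The plan is to apply Theorem~\ref{thm:fininf} to $f$ together with a relative Fr\'echet derivative $L$ of $f$ on $\theset$ that is chosen with some care on the isolated points of $\theset$, and then to read off the conclusions from items \itemref{thm:fininf:item:ext}, \itemref{thm:fininf:item:frechet}, \itemref{thm:fininf:item:infsmoothcomp} and, above all, \itemref{thm:fininf:item:strict}.

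By Lemma~\ref{l:U2}, $f$ has a unique relative strict derivative $L_0\fcolon \der\theset\to\mathcal L(\rn,Y)$ on $\der\theset$. If $\der\theset=\emptyset$ then $\theset$ is discrete, statement~\itemref{item:strictlyconti} is void, and one may take $L\equiv 0$; so assume $\der\theset\neq\emptyset$. For each isolated point $p\in\theset\setminus\der\theset$ fix $q_p\in\der\theset$ with $|p-q_p|\le 2\dist(p,\der\theset)$ (possible since $\der\theset$ is nonempty and closed), and set $L|_{\der\theset}=L_0$ and $L(p)=L_0(q_p)$ for $p\in\theset\setminus\der\theset$. Since at an isolated point every bounded linear operator is a relative strict derivative, $L$ is a relative strict derivative (and hence a relative Fr\'echet derivative) of $f$ on $\theset$. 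Moreover $L$ is continuous (with respect to $\theset$) at every $x\in\der\theset$ at which $L_0$ is continuous with respect to $\der\theset$: if $z_k\to x$ with $z_k\in\theset$, then $L(z_k)=L_0(z_k)\to L_0(x)$ when $z_k\in\der\theset$, while when $z_k\in\theset\setminus\der\theset$ we have $q_{z_k}\in\der\theset$ and $|q_{z_k}-x|\le|q_{z_k}-z_k|+|z_k-x|\le 3|z_k-x|\to 0$ (using $\dist(z_k,\der\theset)\le|z_k-x|$), whence $L(z_k)=L_0(q_{z_k})\to L_0(x)=L(x)$. Finally $L$ is Baire one on $\theset$: this follows by the argument in the proof of Proposition~\ref{prop:KZ410}, which uses only that $\theset\setminus\der\theset$ is countable and that $L|_{\der\theset}=L_0$ is continuous on each of the closed sets $\theset_m$ occurring there.

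Now apply Theorem~\ref{thm:fininf} to $\theset$, $Y$, $f$, $L$ and call the extension $\bar f$. By \itemref{thm:fininf:item:ext}, \itemref{thm:fininf:item:frechet} and \itemref{thm:fininf:item:infsmoothcomp} we have $\bar f=f$ on $\theset$, $(\bar f)'=L$ on $\theset$, and $\bar f\in C^\infty(\rn\setminus\theset,Y)$, so $\bar f$ is a differentiable extension of $f$. For \itemref{item:NaDoplnku}, let $x\in\rn\setminus\der\theset$. If $x\notin\theset$, a neighbourhood of $x$ lies in $\rn\setminus\theset$, where $\bar f$ is $C^\infty$, hence $C^1$, so $\bar f$ is strictly differentiable at $x$ and $(\bar f)'$ is continuous at $x$ (with respect to $\rn$). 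If $x\in\theset\setminus\der\theset$ then $x$ is isolated in $\theset$, so the hypotheses of \itemref{thm:fininf:item:strict} hold automatically at $a=x$; it yields that $L(x)$ is the strict derivative of $\bar f$ at $x$ with respect to $\rn$ and that $(\bar f)'$ is continuous at $x$ with respect to $(\rn\setminus\theset)\cup\{x\}$, which on a ball $B$ with $B\cap\theset=\{x\}$ is continuity with respect to $\rn$. For \itemref{item:strictlyconti}, let $x\in\der\theset$ be a point at which $L_0$ is continuous with respect to $\der\theset$. By the previous paragraph $L$ is continuous at $x$ with respect to $\theset$, and $L(x)=L_0(x)$ is a relative strict derivative of $f$ at $x$; hence \itemref{thm:fininf:item:strict} applies and gives that $L(x)$ is the strict derivative of $\bar f$ at $x$ with respect to $\rn$ and that $(\bar f)'$ is continuous at $x$ with respect to $(\rn\setminus\theset)\cup\{x\}$. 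Since $(\bar f)'|_\theset=L$ is continuous at $x$ with respect to $\theset$ and $\rn=\theset\cup\big((\rn\setminus\theset)\cup\{x\}\big)$, continuity being a local property we conclude that $(\bar f)'$ is continuous at $x$ with respect to $\rn$.

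The only genuinely non-routine step is the construction of $L$ in the second paragraph: on the isolated points of $\theset$ the values of $L$ must be picked so that $L$ is \emph{simultaneously} continuous (with respect to $\theset$) at the points of $\der\theset$ named in \itemref{item:strictlyconti} --- needed for \itemref{thm:fininf:item:strict} to be applicable there --- and Baire one on $\theset$, needed for Theorem~\ref{thm:fininf} to apply at all. Once $L$ is fixed, everything reduces to bookkeeping with the items of Theorem~\ref{thm:fininf}.
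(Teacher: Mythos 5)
Your proof is correct, and its skeleton is the one the paper uses: produce a relative strict derivative $L$ of $f$ on the whole of $\theset$ that is Baire one and continuous (with respect to $\theset$) at the points of $\der \theset$ named in \itemref{item:strictlyconti}, then feed $f$ and $L$ into Theorem~\ref{thm:fininf} and read off \itemref{thm:fininf:item:frechet}, \itemref{thm:fininf:item:infsmoothcomp} and \itemref{thm:fininf:item:strict} exactly as you do. The one step you implement differently is the fix-up of $L$ at the isolated points of $\theset$. The paper restricts $L_0$ to the closed set $\der\theset$, applies Theorem~\ref{thm:B1ext} to get an extension $L_1$ of $L_0|_{\der\theset}$ to $\rn$ that is automatically Baire one and continuous at every continuity point of $L_0|_{\der\theset}$, and sets $L=L_1|_\theset$; you instead copy the value from a near-best approximant $q_p\in\der\theset$, which gives the required continuity by your $3|z_k-x|$ estimate but obliges you to re-verify the Baire one property by rerunning the approximation argument from the proof of Proposition~\ref{prop:KZ410}. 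That re-verification is legitimate: the construction of the continuous functions $L_m$ there only uses that $L$ is continuous on each closed set $\theset_m\cup H_m$ (the $H_m$ consisting of isolated points of $\theset$, of which there are countably many), and is insensitive to the particular values assigned at isolated points. Your route is more elementary and self-contained at the cost of this extra check; the paper's gets Baire-one-ness of the modified $L$ for free from the companion extension theorem. Both correctly conclude continuity of $(\bar f)'$ at $x$ with respect to all of $\rn$ by combining the conclusion of \itemref{thm:fininf:item:strict} on $(\rn\setminus\theset)\cup\{x\}$ with $(\bar f)'|_\theset=L$ and the continuity of $L$ at $x$ with respect to $\theset$.
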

        \begin{proof}
                Let $L_0\fcolon \theset \to \mathcal L(\rn, Y)$ be a~strict derivative of~$f$ on~$\theset$
                (recall that 
                at isolated points,
                any element of $\mathcal L(\rn, Y)$ is a~strict derivative of $f$ with respect to $\theset$).
                By the proof of Proposition~\ref{prop:KZ410}, $L_0$
                is a~Baire one function on~$\theset$.
                So is the restriction $L_0|_{\der \theset}$ to the closed set $\der \theset$.
                Using Theorem~\ref{thm:B1ext}
                with $L=L_0|_{\der \theset}$, we obtain $A\setcolon \rn \setminus \der \theset \to \mathcal L(\rn, Y)$
                such that the ``union of functions'' $L_1:= L_0|_{\der \theset} \cup A$
                is an extension of $L_0|_{\der \theset}$ that is
                Baire one on $\rn$ and 
                continuous at every point of $\rn \setminus \der \theset$
                and at every point of $\der \theset$ where $L_0|_{\der \theset}$ is continuous.
                Let
                $L=L_1|_\theset$.
                The extension $\bar f$ of~$f$ provided by Theorem~\ref{thm:fininf}
                (with special regard to \itemref{thm:fininf:item:strict})
                is strictly differentiable at~all points promised. 
        Let $x\in \theset$ be as
        in~\itemref{item:NaDoplnku}
        or~\itemref{item:strictlyconti}.
        Then
        the continuity of the derivative of~$\bar f$ at~$x$
        with respect to $(\rn\setminus \theset)\cup \{x\}$ comes from
        Theorem~\ref{thm:fininf}\itemref{thm:fininf:item:strict}.
        This combines with the
        continuity
        of the relative strict derivative of~$f$
        at~$x$ with respect to~$\theset$
        assumed in~\itemref{item:strictlyconti}.
        \end{proof}

        The continuity
        assumption
        cannot be removed
        from~Proposition~\ref{prop:KZstrictstrict}\itemref{item:strictlyconti}
        as can be seen from \cite[Example~4.14]{KZ}.
        However, it can be replaced by an assumption on $\theset$,
        see \cite{KolarClanekIII}.

        A particular corollary under the assumptions of Proposition~\ref{prop:KZstrictstrict}
        is
        the following statement:
        if the (unique)
                relative
                strict
        derivative of~$f$ on~$\der \theset$ is
        assumed to be continuous on the whole $\der \theset$
        then there exists a~$C^1$ extension of $f$ to $\rn$.
        However, this follows from Whitney's theorem, even without the assumptions on $\Ptg(\theset,x)$.
        (The extension formula \cite[(11.1)]{W} of Whitney works well for vector-valued functions.)

\end{appendices}

\end{document}